\def\blfootnote{\xdef\@thefnmark{}\@footnotetext}
\newcommand{\Z}{{\mathbb Z}}
\newcommand{\NN}{{\mathbb N}}
\newcommand{\PP}{{\mathbb P}}
\newcommand{\KK}{{\mathbb K}}
\newcommand{\C}{{\mathbb C}}
\newcommand{\RR}{{\mathbb R}}
\newcommand{\R}{{\mathbb R}}
\newcommand{\QQ}{{\mathbb Q}}
\newcommand{\T}{{\mathbb T}}
\newcommand{\TP}{{\T\PP}}
\newcommand{\RP}{{\RR\PP}}
\newcommand{\GL}{{\text{GL}}}
\newcommand{\CP}{{\C\PP}}
\newcommand{\affFan}{\Sigma}
\newcommand{\projFan}{\PP\Sigma}
\newcommand{\EEE}{{\mathcal E}}
\newcommand{\CCC}{{\mathcal C}}
\newcommand{\FFF}{{\mathcal F}}
\newcommand{\MMM}{{\mathcal M}}
\newcommand{\OOO}{{\mathcal O}}
\newcommand{\PPP}{{\mathcal P}}
\newcommand{\XXX}{{\mathcal X}}
\newcommand{\YYY}{{\mathcal Y}}
\newcommand{\XX}{{\mathbf X}}
\newcommand{\Tbold}{{\mathbf T}}
\DeclareMathOperator{\cl}{cl}
\DeclareMathOperator{\Trop}{Trop}
\DeclareMathOperator{\sign}{sign}
\DeclareMathOperator{\Int}{Int}
\DeclareMathOperator{\BC}{BC}
\DeclareMathOperator{\Pow}{Pow}
\DeclareMathOperator{\Spec}{Spec}
\newcommand{\PW}{\mathfrak{R}}
\newcommand{\D}{{\mathcal D}}
\newcommand{\E}{{\mathcal E}}
\newcommand{\F}{{\mathcal F}}
\newcommand{\X}{{\mathcal X}}
\newcommand{\G}{{\mathcal G}}
\newcommand{\Y}{{\mathcal X}'}
\newcommand{\M}{{\mathcal M}}
\renewcommand{\S}{{\mathcal S}}
\renewcommand{\P}{{\mathcal P}}
\renewcommand{\M}{{\mathcal M}}
\renewcommand{\L}{{\mathcal L}}
\newcommand{\bL}{\textbf{L}}
\newcommand{\val}{\text{val}}
\newcommand{\Aff}{\text{Aff}}
\newcommand{\BM}{\text{BM}}
\newcommand{\K}{{\mathcal K}}
\newcommand{\Rnn}{\R_\geq}
\newcommand{\Rpos}{\R_>}
\newcommand{\PWpos}{\PW_>}
\newcommand{\PWnn}{\PW_\geq}
\DeclareMathOperator{\Star}{star}
\DeclareMathOperator{\pStar}{Pstar}
\DeclareMathOperator{\relint}{relint}
\DeclareMathOperator{\sed}{sed}
\DeclareMathOperator{\RecCone}{RecCone}
\DeclareMathOperator{\Bl}{Bl}
\DeclareMathOperator{\Conv}{Conv}
\DeclareMathOperator{\init}{in}
\DeclareMathOperator{\Facets}{Facets}
\DeclareMathOperator{\Bounded}{Bounded}
\DeclareMathOperator{\Log}{Log}
\DeclareMathOperator{\Hom}{Hom}
\newtheorem{thm}{Theorem}[section]
\newtheorem{defi}[thm]{Definition}
\newtheorem{definition}[thm]{Definition}
\newtheorem{prop}[thm]{Proposition}
\newtheorem{proposition}[thm]{Proposition}
\newtheorem{lemma}[thm]{Lemma}
\newtheorem{cor}[thm]{Corollary}
\newtheorem{remark}[thm]{Remark}
\newtheorem{corollary}[thm]{Corollary}
\theoremstyle{definition}
\theoremstyle{definition}
\newtheorem{example}[thm]{Example}
 \numberwithin{equation}{section}
\tikzset{%
  add/.style args={#1 and #2}{to path={%
 ($(\tikztostart)!-#1!(\tikztotarget)$)--($(\tikztotarget)!-#2!(\tikztostart)$)%
  \tikztonodes}}
} 
\newcommand{\comment}[1]{}
\begin{document}
\title[Patchworking in higher codimension]{Real phase structures on tropical varieties and 
patchworks in higher codimension} 

\author[Johannes Rau]{Johannes Rau}
\address{Johannes Rau, Departamento de Matem\'aticas, Universidad de los Andes, 
Carrera 1 \# 18A - 12, 111711 Bogot\'a, Colombia}
\email{j.rau@uniandes.edu.co}
\author[Arthur Renaudineau]{Arthur Renaudineau}
\address{Arthur Renaudineau, Univ. Lille, CNRS, UMR 8524 -
Laboratoire Paul Painlev\'e, F-59000 Lille, France.}
\email{arthur.renaudineau@univ-lille.fr}
\author[Kris Shaw]{Kris Shaw}
\address{Kris Shaw, 
University of Oslo, Oslo, Norway.}
\email{krisshaw@math.uio.no}


\begin{abstract}

This paper generalises the homeomorphism theorem behind Viro's combinatorial patchworking of hypersurfaces in toric varieties to arbitrary codimension using tropical geometry. 
We first define the patchwork of a polyhedral space equipped with a real phase structure. When the polyhedral subspace is tropically non-singular, we show that the patchwork is a topological manifold. When a non-singular tropical variety appears as a tropical limit of a real analytic family, we show that the real part of a fibre
of the family near the tropical limit is homeomorphic to the patchwork.

Finally we extend the spectral sequence introduced by  the last two authors in the case of hypersurfaces  to non-singular tropical varieties with real phase structures. As a corollary, we obtain  bounds on the Betti numbers of the patchwork in terms of the dimensions of the tropical homology groups with coefficients modulo two.

\end{abstract}
\maketitle

\tableofcontents{}

\section{Introduction}
Viro's patchworking method is a powerful method to study the topological types 
of real algebraic hypersurfaces in toric varieties \cite{Viro6and7}. 
A particular case of the method is \emph{combinatorial} patchworking which, 
from some combinatorial data, produces two objects: a polyhedral space and a patchworking polynomial. 
Viro's theorem  provides a homeomorphism between  the polyhedral space and the real part of a hypersurface defined by the polynomial. 
It was later generalized to complete intersections, see \cite{SturmfelsCompInt} and \cite{Bihan}.
This paper aims to generalise the homeomorphism theorem of combinatorial  patchworking beyond the case of hypersurfaces and complete intersections.

To  generalise the theorem to arbitrary codimension, 
we begin by considering rational polyhedral spaces and tropical varieties equipped with real phase structures, which were defined in the case of polyhedral fans in \cite{RRSmat}. 
For a rational polyhedral subspace $X$ in $\R^n$, a real phase structure is an assignment of an affine subspace of the vector space $\Z_2^n$, where $\Z_2 := \Z /2\Z$
to each facet of $X$.

The assignment of a real phase structure is then used to describe the \emph{patchwork} of a tropical variety, which should be thought of as its real part, see Definition \ref{def:realpart}.
 Our first result is a  description of  the structure of the patchwork 
when we start from a non-singular tropical subvariety.

\begin{thm}\label{thm:PLman}
The patchwork of a non-singular tropical subvariety equipped with a real phase structure in a tropical toric variety is a topological manifold. 
\end{thm}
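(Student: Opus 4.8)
The plan is to establish the manifold property locally, using the fact that non-singularity of a tropical variety is a local condition and that the patchwork construction is compatible with restriction to open subsets. First I would reduce to the affine case: since being a topological manifold is local, it suffices to check the claim on a neighbourhood of each point $x \in X$, and near such a point the tropical variety together with its real phase structure is determined by the star of $x$, i.e.\ by a non-singular tropical linear space (the Bergman fan of a matroid, or after a unimodular coordinate change a standard tropical linear space $L^k_n \subseteq \R^n$). So the heart of the matter is to identify the patchwork of $L^k_n$ equipped with an arbitrary real phase structure and show it is homeomorphic to $\R^k$ (or to an open subset of a manifold near the toric boundary).

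Next I would compute this model case explicitly. A non-singular tropical linear space of dimension $k$ in $\R^n$ is, combinatorially, a cone over a matroid fan, and its facets are indexed by bases of the matroid; a real phase structure assigns to each facet an affine subspace of $\Z_2^n$, subject to the compatibility condition across codimension-one faces (this is the defining condition of a real phase structure, which I take as given from the reference \cite{RRSmat}). The patchwork is assembled from $2^n$ (or fewer) copies of $X$ glued along these phase data inside $\R^n \times \Z_2^n$. The key observation is that the local model of a non-singular tropical variety of dimension $k$ looks, up to the relevant equivalence, like $k$ "coordinate hyperplane'' patchworks multiplied together, each of which is the classical Viro patchwork of a line (a tripod) whose real part is known to be a circle minus some points, i.e.\ an arc — and a product of $k$ arcs is a $k$-cell. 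More precisely, I would show that the local patchwork of $L^k_n$ with any real phase structure is PL-homeomorphic to $\R^k$ by induction on $k$, using the cone structure and the deletion/contraction recursion for the underlying matroid to peel off one coordinate at a time.

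The main obstacle, and the step requiring the most care, is the gluing: one must verify that the pieces of the patchwork fit together to form a manifold \emph{without boundary} along the codimension-one faces of $X$, i.e.\ that exactly the right number of top-dimensional sheets meet along each ridge and that the real phase structure condition is precisely what guarantees this local Euclidean structure. Concretely, along a codimension-one face $\tau$ of $X$ there are several facets $\sigma_1, \dots, \sigma_m$ (with $m \geq 2$ by non-singularity and balancing, in fact $m$ equals the number of rays of the corresponding quotient fan), and the compatibility condition on the affine subspaces attached to the $\sigma_i$ must force the corresponding sheets of the patchwork to close up into a neighbourhood of $\R^{k}$ rather than leaving a boundary or creating a branching locus. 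I would handle this by passing to the quotient by the lineality space of $\tau$, reducing to the one-dimensional situation $L^1_m \subseteq \R^{m-1}$ (an $m$-valent tropical curve), where the statement is a direct check: the patchwork of such a tropical curve with a real phase structure is a $1$-manifold, namely a disjoint union of arcs, which is the tropical incarnation of Viro's original hypersurface patchworking theorem in dimension one. Once the local models and the gluing are understood, assembling the global statement — and dealing with the points of $X$ lying in the toric boundary of the ambient tropical toric variety, where one works with the sedentarity stratification and the local model becomes a product of a linear-space patchwork with a half-space — is routine.
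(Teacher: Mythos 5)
Your opening reduction is the same as the paper's: non-singularity is local, and near any point (after a toric change of coordinates handling the sedentarity stratum) the pair $(X,\EEE)$ is modelled on the closure of a matroid fan $\projFan_M$ with an induced real phase structure. After that, however, your treatment of the local model has two genuine problems. First, the model is \emph{not}, up to unimodular coordinate change, the standard tropical linear space $L^k_n$ (the Bergman fan of a uniform matroid), nor is it locally a product of $k$ tropical lines (tripods); a general matroid fan has facets indexed by complete flags of flats (not by bases), and its combinatorics cannot be reduced to products of one-dimensional fans by any coordinate change. So the explicit computation you propose for the model case applies only to a very special class of local models and does not cover the general non-singular situation; the deletion/contraction induction you sketch is not developed enough to replace it.

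Second, and more seriously, verifying that the sheets close up correctly along codimension-one faces is not sufficient to prove the manifold property. That codimension-one condition is exactly the ``even covering'' axiom of a real phase structure and only yields that the patchwork is a closed ($\Z_2$-)cycle, i.e.\ a pseudomanifold; it says nothing about local Euclidean structure at faces of codimension $\geq 2$, in particular at the cone point of the local matroid fan, where one must show that the link of the point in $\PW\overline{\projFan}_M$ is a sphere $S^{d-1}$ (and the local patchwork a ball). This is the actual content of the theorem, and the paper obtains it from the equivalence between real phase structures on matroid fans and oriented matroids (\cite[Theorem 1.1]{RRSmat}) together with the Folkman--Lawrence topological representation theorem: the patchwork intersected with a suitable polyhedral sphere is the order complex of the covector poset of the associated oriented matroid, hence a sphere, and the analogous statement for the Las Vergnas lattice handles the boundary strata. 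Your proposal contains no substitute for this input, so the key step is missing.
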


We then show that a patchwork describes,   up to homeomorphism, fibers of  real analytic families with non-singular tropical limits. Let us describe our setup: 
Let $\XX \subset (\C^*)^n  \times \D^*$ be a real analytic family of 
algebraic varieties over the punctured disc $\D^*$.
That is, locally $\XX$ can be described by polynomials whose coefficients are Laurent series convergent on $\D^*$.
Moreover, the family is compatible with complex conjugations on $(\C^*)^n$ and $\D^*$. 
To such a family, we can associate a tropical limit $\Trop(\XX)= X \subset \R^n$, by taking the image of $\XX$ under the valuation map, see Section \ref{subsection NSTL}. 
Let us assume that $\Trop(\XX)$ is tropically non-singular, see Section \ref{subsection NSTL} for definition. Then additionally the family $\XX$ induces a real phase structure 
$\EEE$ on $\Trop(\XX)$, see Definition \ref{def:realphaselinear}.
We relate this to pre-existing notions of tropicalising real varieties in Remark \ref{sec:comparison}.

For any pointed unimodular fan $\Sigma$, we can consider the compactification of $\XX \subset (\C^*)^n \times \D^*$ in $\C \Sigma \times \D^*$. Consider a subdivision $\X$ of $X$ and the set $\RecCone(\X)$ of all recession cones for polyhedron in $\X$, see Section \ref{subsectionPolyhedralComplexes}. 
The second main theorem is that, under the assumption that $\RecCone(\X) \cup \Sigma$ forms a fan,
the patchwork $\PW(\overline{X}, \EEE(\XX))$
of $X$ (see Definition \ref{def:realpart}) in the real tropical toric variety $ \PW\T\Sigma$ of $\Sigma$ (see Equation \ref{eq:tropicalrealtoric}) gives the homeomorphism type of $\R \overline{\XX}_t \subset \R \Sigma$ for sufficiently small positive $t$.

\begin{thm}[Patchworking for non-singular tropical limits] \label{Patchworking}
	Let $\XX \subset (\C^*)^n \times \D^*$ be a real analytic family 
	with  non-singular tropical limit $X = \Trop(\XX)$ 
	and associated real phase structure $\EEE = \EEE(\XX)$. 
	Let $\X$ be a subdivision of $X$ and 
	$\Sigma$ a pointed unimodular fan such that $\RecCone(\X) \cup \Sigma$ is a fan. 

	Then for sufficiently small positive $t \in \D^* \cap \R$ the pairs 
	$(\R\Sigma, \R \overline{\XX}_t)$ and 
	$( \PW\T\Sigma, \PW(\overline{X}, \EEE))$ are homeomorphic.
	Moreover, the homeomorphism can be chosen to respect
	the stratification of $\R\Sigma$ and $\PW\T\Sigma$ by torus orbits. 
\end{thm}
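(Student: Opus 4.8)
The plan is to reduce the global statement to a local one near each point of the tropical limit, using the non-singularity assumption to linearise, and then to glue. First I would work chart by chart on the torus orbits of $\R\Sigma$ and $\PW\T\Sigma$: since the desired homeomorphism must respect the torus-orbit stratification, it suffices to produce it over the closure of each orbit compatibly, and the compactification data $\RecCone(\X)\cup\Sigma$ being a fan guarantees that the recession behaviour of $\X$ matches the cone structure of $\Sigma$, so the boundary strata of $\overline X$ sit inside $\PW\T\Sigma$ exactly as the boundary strata of $\overline\XX_t$ sit inside $\R\Sigma$. Thus the heart of the matter is the open dense torus $(\C^*)^n\times\D^*$, i.e.\ proving the pairs $(\R^n, \R\XX_t)$ and $(\R^n, \PW(X,\EEE))$ are homeomorphic for small $t>0$ and then checking the homeomorphism extends continuously to the compactification.

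Over the dense torus, I would use a logarithmic/Maslov-type rescaling: after applying $\Log_t = \tfrac{1}{\log t}\log|\cdot|$ to $\XX_t$, the amoeba of $\XX_t$ converges (as $t\to 0^+$) to the tropical limit $X=\Trop(\XX)$, and the phase/argument data of $\XX_t$ degenerates, by the hypothesis that the tropical limit is non-singular and carries the induced real phase structure $\EEE(\XX)$ (Definition \ref{def:realphaselinear}), to precisely the combinatorial gluing pattern encoded in $\PW(X,\EEE)$. Concretely, I would cover $X$ by neighbourhoods of the relative interiors of its cells; over a neighbourhood of a top-dimensional cell $X$ is, up to a monomial change of coordinates (legitimate because $\X$ is a subdivision with unimodular—i.e.\ non-singular—structure), an affine-linear space, and there the statement is the classical Viro/Sturmfels/Bihan complete-intersection patchworking (cited in the introduction); over a neighbourhood of a lower-dimensional cell, non-singularity means $X$ looks locally like a product of a coordinate subspace with a non-singular tropical fan of a matroid that is a Bergman-type fan, and one invokes the local model together with Theorem \ref{thm:PLman} which already tells us the patchwork is a manifold of the expected dimension. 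I would then patch these local homeomorphisms together using a partition of unity on the base of the family and the fact that on overlaps the transition maps are isotopic to the identity through maps respecting the stratification.

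The main obstacle, and the step requiring the most care, is the gluing: the local homeomorphisms produced over different cells of $\X$ need not agree on overlaps on the nose, so one must show they can be chosen compatibly, or alternatively interpolate between them. The clean way is to set up the homeomorphism as a limit: construct, for each small $t$, an explicit map $\R\overline\XX_t\to\PW(\overline X,\EEE)$ by combining the $\Log_t$ map on absolute values with a continuous choice of "phase collapse" on the argument torus $(\Z_2)^n$-coordinates dictated by $\EEE$, prove it is a homeomorphism for $t$ small by an open-closed or degree argument (injectivity and surjectivity near the tropical limit follow from non-singularity, which forbids the kind of cell collisions that would break injectivity), and check continuity up to the toric boundary using the explicit description of $\PW\T\Sigma$ in Equation \ref{eq:tropicalrealtoric} and the compatibility of $\RecCone(\X)$ with $\Sigma$. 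The uniformity in $t$ — that a single combinatorial model works for all sufficiently small $t$ — is exactly what the non-singularity hypothesis buys, and is where I expect the bulk of the technical work to lie.
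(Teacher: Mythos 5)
Your overall instinct---reduce to local linear models via non-singularity, then assemble---points in a reasonable direction, but the assembly step, which you yourself flag as the crux, is where the proposal breaks down, and no working mechanism is offered. Local homeomorphisms cannot be glued with a partition of unity: partitions of unity combine sections of sheaves of functions, not homeomorphisms, and the assertion that transition maps on overlaps are ``isotopic to the identity through stratified maps'' is exactly the kind of statement that would require a full proof on its own (and uniformly in $t$). Likewise, the alternative route via an explicit map combining $\Log_t$ with a ``phase collapse'' is not made precise: near lower-dimensional cells of $X$ the arguments of points of $\R\XX_t$ are not governed by a single facet's phase data, so defining a continuous global map requires interpolation regions, and your suggested ``open-closed or degree argument'' cannot deliver injectivity (degree controls surjectivity at best, and only after the map is already shown to be well defined, proper and a local homeomorphism). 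Finally, the uniformity in $t$ does not follow from tropical non-singularity alone; non-singularity gives sch\"onness of $\XX$ (smooth, transverse initial degenerations), but converting that into a topological statement about actual fibres $\R\overline{\XX}_t$ for small $t>0$ needs a concrete analytic/topological input which the proposal never identifies.

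For comparison, the paper avoids constructing and gluing local homeomorphisms altogether. After a reduction (via a base change $t\mapsto t^d$ and \Cref{lem:Refinements}) to a strongly unimodular subdivision $\PPP$ of $\R^n$, it builds the toric degeneration $\C\PPP\to\D$, in which $\overline{\XX}$ is a totally real semi-stable degeneration; the results of \cite{Rau-RealSemiStable} (real-oriented blow-up of the special fibre) then give, for small positive $t$, a regular CW structure on $(\R\Sigma,\R\overline{\XX}_t)$ whose cell poset is $Q(\PPP)$ with the subvariety cells indexed by $Q(\X,\EEE)$ (\Cref{thm:ClassicalSide}). On the tropical side a parallel ``special fibre'' $(\PW\T\PPP^+_\infty,\PW(\overline{X},\EEE)^+_\infty)$ is constructed and compared to $(\PW\T\Sigma,\PW(\overline{X},\EEE))$ via bounded-cubical subdivisions (\Cref{prop:TropicalSpecialFibre}, \Cref{thm:TropicalSide}), yielding a regular CW pair with the \emph{same} posets. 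The local analysis of linear spaces and their patchworks (standard pairs of cells, via the Las Vergnas lattice and \Cref{lem:ProjectiveCase}) serves only to certify that the strata are genuine cells. The homeomorphism is then produced abstractly by \Cref{prop:CWHomeomorphism}: regular CW pairs with isomorphic face posets are homeomorphic, respecting strata. This poset-comparison device is precisely what replaces---and is needed in place of---your gluing step; without it, or the semi-stable degeneration input from \cite{Rau-RealSemiStable}, the proposal does not close.
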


We expect that the assumption that $\RecCone(\X) \cup \Sigma$ forms a fan 
can be weakened. 

We give a rough outline of the proof strategy here. The first step is the reduction to a  variant of the above theorem in which we assume the existence of a unimodular subdivision $\PPP$ of $\R^n$   compatible with $\Trop(\XX)$ in  \Cref{UnimodularPatchworking}. 
Under the unimodular assumption, the strategy is then to view both topological pairs from  \Cref{Patchworking} as regular CW pairs in the sense of \Cref{def:regpair }. In order to do this, we must prove a sequence of lemmas in \Cref{sec:local}  establishing topological properties of closures of real  linear spaces and their patchworks in toric varieties. 
We then use these lemmas and  \cite{Rau-RealSemiStable} to give a description of  $(\R\Sigma, \R \overline{\XX}_t)$ as a CW pair for small and positive $t$, see \Cref{thm:ClassicalSide}.

We then aim to compare the CW description of  $(\R\Sigma, \R \overline{\XX}_t)$ to the patchwork by introducing some intermediate CW pairs. Figure \ref{fig:proofCWcomplexes}  shows the spaces involved in the steps we now outline and the relations between them. First we introduce the notion of tropical semi-stable degeneration and of tropical special fibre arising from the subdivision $\PPP$, see \Cref{def:tropicalspecialfibre}.  
We consider the pair of tropical special fibers, denoted by $(\T \PPP_{\infty},\overline{X}_\infty)$. We prove that this pair is homeomorphic to the pair $(\T \Sigma,\overline{X})$, by using an adaptation of cubical subdivisions which we call the \emph{bounded cubical subdivision}, see \Cref{prop:TropicalSpecialFibre}.  The operation of taking this subdivision is denoted $BC$ in \Cref{fig:proofCWcomplexes}.
 We then pass to the patchwork of the  tropical positive  special fiber  $(\PW \T \PPP^+_\infty, \PW(\overline{X},\EEE)^+_\infty)$, which, it  follows, is homeomorphic to the pair $( \PW\T\Sigma, \PW(\overline{X}, \EEE))$.
  
  \begin{figure}[b]
  \begin{tikzcd}
 (\T \PPP_{\infty},\overline{X}_\infty)    \arrow[r, dotted, "patch"]                     & (\PW \T \PPP^+_\infty, \PW(\overline{X},\EEE)^+_\infty)   &  \\
 (\T\Sigma ,\overline{X} )                 \arrow[ "BC"]{1-1} \arrow[r, dotted, "patch"]  & ( \PW\T\Sigma, \PW(\overline{X}, \EEE))  \arrow[ "BC"]{1-2} & (\R\Sigma, \R \overline{\XX}_t) \arrow[ul, "cub"]
\end{tikzcd}
\caption{The CW complexes involved in the proof of  \Cref{Patchworking} 
}\label{fig:proofCWcomplexes} 
\end{figure}

We finish our  comparison of $( \PW\T\Sigma, \PW(\overline{X}, \EEE))$ and  $(\R\Sigma, \R \overline{\XX}_t)$ by showing that the  
tropical positive special fiber $(\PW \T \PPP^+_\infty, \PW(\overline{X},\EEE)^+_\infty)$ is also a cubical subdivision (denoted $cub$ in \Cref{fig:proofCWcomplexes}) of the CW pair structure that we define on $(\R\Sigma, \R \overline{\XX}_t)$  in \Cref{thm:ClassicalSide}.
Lastly, we use that CW pairs with isomorphic face posets are homeomorphic. 

We say that a real algebraic variety is \emph{close to a non-singular limit}
if it is isomorphic to a fibre $\overline{\XX}_t$ for which the assumptions of \Cref{Patchworking}
hold. 
The homeomorphism from \Cref{Patchworking}  implies that statements about the topology of $\PW(\overline{X}, \EEE)$ translate 
into statements about the topology of real algebraic varieties close to a non-singular  tropical limit.

Next, we adapt 
the arguments from \cite{RS} to bound the Betti numbers of the patchwork in terms of the dimensions of the  tropical homology groups. 
For a tropical manifold $X$, we denote the tropical homology groups with $\Z_2$ coefficients by 
$H_q(X; \F_p)$. We let $H_q^{BM}(X; \F_p)$ denote the Borel-Moore variants of these homology groups. 
See Section \ref{sec:trophom} for the definition of tropical homology.

We define the (Borel-Moore) tropical  signature of $X$ as 
\[
  \sigma^\diamond(X) := \sum_{p,q} (-1)^q H_{q}^\diamond(X; \F_p), 
\]
where $\diamond$ is either empty, denoting usual homology, or $\diamond$ is BM, denoting Borel-Moore homology. 
Notice that $\sigma^\diamond(X)$ is also equal to $\sum_{p,q} (-1)^q H_{q}^\diamond(X; \F_p^{\R})$ 
where $H_{q}^\diamond(X; \F_p^{\R})$ denotes tropical homology with real coefficients.

\begin{thm}\label{thm:boundbetti}
If  $X$ is a $d$-dimensional non-singular tropical subvariety equipped with a real phase structure $\E$, then 
$$b_q (\PW(X, \EEE)) \leq \sum_{p = 0}^d \dim H_q(X; \F_p) $$ and
$$ b^{BM}_q (\PW(X, \EEE)) \leq \sum_{p = 0}^d \dim H^{BM}_q(X; \F_p).$$

Moreover, the (Borel-Moore) Euler characteristic of $\PW (X, \EEE)$ is equal to the 
	(Borel-Moore) tropical signature of $X$, namely, 
	\begin{align*} 
    \chi(\PW(\X,\EEE)) = \sigma(X), && \chi^\text{BM}(\PW(\X,\EEE)) = \sigma^\text{BM}(X).
	\end{align*}
\end{thm}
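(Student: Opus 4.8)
The plan is to deduce Theorem~\ref{thm:boundbetti} from the spectral sequence promised in the introduction (the adaptation of the construction of \cite{RS} from hypersurfaces to non-singular tropical varieties with real phase structures). Concretely, one constructs a filtration of the patchwork $\PW(X,\EEE)$ whose associated spectral sequence has first page $E_1^{p,q}$ built out of the tropical homology groups $H_q(X;\F_p)$ (with $\F_p$ the degree-$p$ multi-tangent coefficient sheaf, coefficients in $\Z_2$), converging to $H_*(\PW(X,\EEE);\Z_2)$, and likewise in the Borel--Moore setting. The filtration itself should come from the stratification of $\PW(X,\EEE)$ induced by the combinatorial structure of $X$ --- essentially grouping the $2^{\dim}$-fold copies of each polyhedron of $X$ by the coordinate subspaces on which the real phase structure acts, mirroring how Renaudineau--Shaw filter the real part of a patchworked hypersurface by the ``level'' of a cell.

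Granting that spectral sequence, both assertions follow by standard homological algebra. First I would record that, since we work over the field $\Z_2$, the inequality $b_q(\PW(X,\EEE)) \le \sum_{r} \dim E_r^{p,q}$ holds at every page and in particular
\[
  b_q(\PW(X,\EEE)) \;\le\; \sum_{p=0}^{d} \dim E_1^{p,q} \;=\; \sum_{p=0}^{d} \dim H_q(X;\F_p),
\]
because passing to homology on each page can only decrease dimensions and the total dimension of the $E_\infty$-page along the line $p+q'=\text{const}$ (in the appropriate grading) is exactly $b_q$ of the limit by the convergence of the spectral sequence; the $\F_p$ with $p>d$ vanish because $X$ is $d$-dimensional. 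The Borel--Moore inequality is identical, using the Borel--Moore version of the spectral sequence. For the Euler characteristic statement, I would use that the Euler characteristic is invariant under passing to homology of a complex, hence constant along the pages of the spectral sequence:
\[
  \chi(\PW(X,\EEE)) \;=\; \sum_{p,q} (-1)^{p+q}\dim E_\infty^{p,q} \;=\; \sum_{p,q}(-1)^{p+q}\dim E_1^{p,q} \;=\; \sum_{p,q}(-1)^q \dim H_q(X;\F_p),
\]
where the sign bookkeeping must be checked against the precise bidegree conventions of the spectral sequence (the shift by $p$ in the differential on the $E_1$-page is what converts the naive sign $(-1)^{p+q}$ into the $(-1)^q$ appearing in the definition of $\sigma(X)$; this is exactly the phenomenon that in the hypersurface case turns the alternating sum over the pages into the tropical signature). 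The same computation with Borel--Moore homology gives $\chi^{\mathrm{BM}}(\PW(X,\EEE)) = \sigma^{\mathrm{BM}}(X)$, noting that $\chi^{\mathrm{BM}}$ is well-defined here since $\PW(X,\EEE)$ is a finite CW-complex (indeed, by Theorem~\ref{thm:PLman} and the patchworking picture, it is a finite polyhedral space), so only finitely many terms are nonzero.

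The main obstacle is the construction and convergence of the spectral sequence itself --- everything above is formal once it is in hand. The delicate points are: (i) identifying the $E_1$-page with tropical homology, which requires analyzing the strata of the patchwork of a non-singular tropical variety locally (the lemmas of \Cref{sec:local} on closures of real linear spaces and their patchworks in toric varieties are presumably what make this local computation tractable, reducing it to the affine-linear model where the real phase structure is an honest affine subspace of $\Z_2^n$); (ii) checking that the boundary maps in the filtration quotient complex agree with the tropical boundary maps on the $\F_p$-coefficient chains, up to the sign/index shift; and (iii) handling the Borel--Moore case and the non-compact strata coming from the toric boundary with care, so that the two versions of the statement are genuinely parallel. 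One should also verify that the spectral sequence is concentrated in the range $0 \le p \le d$ so that the sums in the theorem are finite and the stated bounds are the sharp ones coming from the $E_1$-page.
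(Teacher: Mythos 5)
Your overall route is the paper's route: reduce to a spectral sequence whose first page consists of the tropical homology groups $H_q(X;\F_p)$ and which converges to $H_*(\PW(X,\EEE);\Z_2)$, then deduce the Betti bounds from $\dim E^\infty \le \dim E^1$ and the Euler characteristic statement from invariance of the alternating sum along the pages. That formal deduction matches the paper's proof of the theorem essentially verbatim. However, as a proof of the statement your proposal has a genuine gap, and you name it yourself: the construction of the spectral sequence and the identification of its first page are assumed, and that is precisely where all the mathematical content lies. Concretely, the paper (i) replaces the homology of the patchwork by cosheaf homology of a \emph{sign cosheaf} $\S_\EEE$ on $X$ (\Cref{def:signcosheaf}, \Cref{prop:realhomcosheaf}); (ii) filters not the space $\PW(X,\EEE)$ but the \emph{coefficients}: $\K_p\subset\S_\EEE$ is spanned, face by face, by the indicator vectors $w_G$ of $p$-dimensional affine subspaces $G$ contained in the projected phases $\pi(\EEE(\sigma))$ (\Cref{def:Kp}); and (iii) proves exactness of $0\to\K_{p+1}\to\K_p\to\F_p\to 0$ (\Cref{prop:exactcosheaf}), which is the only place non-singularity is used, via the local dimension count $\sum_p\dim\F_p(\tau)=\dim\S_\EEE(\tau)$ obtained from \cite{RRSmat}, Zaslavsky's tope-count formula and Zharkov's computation of $\dim\F_p$ of a matroid fan in terms of the reduced characteristic polynomial. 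Without this exactness one only has a surjection $\K_p/\K_{p+1}\to\F_p$, and the $E^1$ page need not be tropical homology; your proposed space-level stratification of $\PW(X,\EEE)$ (``grouping the copies of each polyhedron by coordinate subspaces'') is not the filtration that works and would not obviously produce $\F_p$. Note also that the local lemmas of \Cref{sec:local} play no role here; they belong to the proof of \Cref{Patchworking}, not of this theorem.

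A smaller point: your sign bookkeeping for the Euler characteristic is off as written. In the paper's conventions the filtration index $p$ is a coefficient grading, the chain degree is $q$, and every differential drops $q$ by exactly one; hence $\sum_{p,q}(-1)^q\dim E^r_{q,p}$ is preserved from page to page with the sign $(-1)^q$ only, giving $\chi^\diamond(\PW(\X,\EEE))=\sum_p\chi^\diamond(\F_p)=\sigma^\diamond(X)$ directly. Your expression $\sum_{p,q}(-1)^{p+q}\dim E^1_{p,q}$, with an appeal to a degree shift to convert it into $\sum_{p,q}(-1)^q\dim H_q(X;\F_p)$, is not correct as stated (there is no extra $(-1)^p$ to cancel); this is fixable, but it should be stated with the actual bigrading of the filtered chain complex $C^\diamond_\bullet(\X;\S_\EEE)\supset C^\diamond_\bullet(\X;\K_p)$ rather than a generic cohomological bidegree convention.
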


We then combine Theorems \ref{Patchworking} and \ref{thm:boundbetti} to obtain the following two corollaries. 

\begin{corollary}\label{cor:boundsSemiStable}
Under the same hypothesis as in Theorem \ref{Patchworking}, for sufficiently small $t \in \D^* \cap \R$ we have 
\[
  b_q (\RR \overline{\XX}_t ) \leq \sum_{p = 0}^d \dim H_q(\overline{X}; \F_p) \quad \text{ and } \quad b^{BM}_q (\RR \overline{\XX}_t) \leq \sum_{p = 0}^d \dim H^{BM}_q(\overline{X}; \F_p).
\]
\end{corollary}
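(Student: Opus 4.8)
The plan is to simply combine the two main theorems already at our disposal. By Theorem~\ref{Patchworking}, under the stated hypotheses, for sufficiently small positive $t \in \D^* \cap \R$ there is a homeomorphism of pairs
\[
  (\R\Sigma, \R \overline{\XX}_t) \cong (\PW\T\Sigma, \PW(\overline{X}, \EEE)).
\]
In particular the space $\R\overline{\XX}_t$ is homeomorphic to the patchwork $\PW(\overline{X}, \EEE)$, so their Betti numbers (and Borel--Moore Betti numbers) agree: $b_q(\R\overline{\XX}_t) = b_q(\PW(\overline{X}, \EEE))$ and $b^{BM}_q(\R\overline{\XX}_t) = b^{BM}_q(\PW(\overline{X}, \EEE))$ for all $q$.

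Next I would apply Theorem~\ref{thm:boundbetti} to the compactified tropical variety $\overline{X}$. To do this one needs to check that the hypothesis of Theorem~\ref{thm:boundbetti} is met, namely that $\overline{X}$ is a $d$-dimensional non-singular tropical subvariety equipped with a real phase structure. The non-singularity of $\overline{X}$ is precisely the hypothesis that $X = \Trop(\XX)$ is a non-singular tropical limit (together with the fact that $\RecCone(\X) \cup \Sigma$ is a fan, which guarantees that the closure $\overline{X}$ in $\T\Sigma$ is again a non-singular tropical subvariety of the tropical toric variety); the real phase structure $\EEE = \EEE(\XX)$ extends to $\overline{X}$ as part of the patchwork data in Theorem~\ref{Patchworking}. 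Granting this, Theorem~\ref{thm:boundbetti} yields
\[
  b_q(\PW(\overline{X}, \EEE)) \leq \sum_{p=0}^d \dim H_q(\overline{X}; \F_p), \qquad
  b^{BM}_q(\PW(\overline{X}, \EEE)) \leq \sum_{p=0}^d \dim H^{BM}_q(\overline{X}; \F_p).
\]

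Chaining the equality of Betti numbers from Theorem~\ref{Patchworking} with the inequalities from Theorem~\ref{thm:boundbetti} gives exactly the claimed bounds
\[
  b_q(\R\overline{\XX}_t) \leq \sum_{p=0}^d \dim H_q(\overline{X}; \F_p)
  \quad\text{and}\quad
  b^{BM}_q(\R\overline{\XX}_t) \leq \sum_{p=0}^d \dim H^{BM}_q(\overline{X}; \F_p),
\]
completing the proof. The only genuine point requiring care — and the step I expect to be the main (if minor) obstacle — is the bookkeeping that the compactification $\overline{X}$ of $X$ in the tropical toric variety $\T\Sigma$ is itself a non-singular tropical subvariety to which Theorem~\ref{thm:boundbetti} applies, rather than just the unbounded part $X \subset \R^n$; this should follow from the fan condition $\RecCone(\X)\cup\Sigma$ being a fan, which is exactly what makes the closure well-behaved, but it is worth stating explicitly so that the hypotheses of the two theorems line up cleanly.
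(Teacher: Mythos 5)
Your proposal is correct and is exactly the paper's argument: the paper gives no separate proof of this corollary beyond the remark that it follows by combining Theorem \ref{Patchworking} (the homeomorphism $(\R\Sigma,\R\overline{\XX}_t)\cong(\PW\T\Sigma,\PW(\overline{X},\EEE))$, hence equality of ordinary and Borel--Moore Betti numbers) with Theorem \ref{thm:boundbetti} applied to $\overline{X}\subset\T\Sigma$ with the real phase structure $\EEE$. Your closing remark about checking that the compactification $\overline{X}$ is itself a non-singular tropical subvariety to which Theorem \ref{thm:boundbetti} applies is a sensible point of care, and it is handled implicitly in the paper by the compatibility hypothesis on $\RecCone(\X)\cup\Sigma$.
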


We define the \emph{signature} of a generic fibre $\overline{\XX}_t$ as 
\[
  \sigma_c(\overline{\XX}_t) := \sum_{p,q} (-1)^p e_c^{p, q}(\overline{\XX}_t)
\]
where $e_c^{p, q}(\overline{\XX}_t)$ is defined via the 
mixed Hodge structure on $\overline{\XX}_t$ by 
$ e_c^{p, q}(\overline{\XX}_t):= \sum_k (-1)^k h^{p,q}(H_c^k (\overline{\XX}_t)).$ 
See for instance \cite{DanilovKhovansky}.

\begin{corollary}\label{cor:eulerSignature}
Let $\XX \subset \C\Sigma \times \D^*$ be a real meromorphic family 
with non-singular tropical limit satisfying the same hypothesis as in Theorem \ref{Patchworking}.
Then for sufficiently small $t \in \D^* \cap \R$ we have 
\[
  \chi^\BM(\RR \overline{\XX}_t )  = \sigma_c(\overline{\XX}_t). 
\]
\end{corollary}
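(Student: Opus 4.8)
The plan is to combine the topological identity $\chi^{\BM}(\RR\overline{\XX}_t) = \sigma^{\BM}(\overline{X})$ from Theorem \ref{thm:boundbetti} (applied through Theorem \ref{Patchworking}, which identifies $\RR\overline{\XX}_t$ with the patchwork $\PW(\overline{X},\EEE)$ up to homeomorphism) with a complex-geometric computation of $\sigma_c(\overline{\XX}_t)$. More precisely, by Corollary \ref{cor:boundsSemiStable}'s underlying homeomorphism we already know $\chi^{\BM}(\RR\overline{\XX}_t) = \chi^{\BM}(\PW(\overline{X},\EEE)) = \sigma^{\BM}(\overline{X})$, the Borel--Moore tropical signature. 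So the corollary reduces to the purely complex statement
\[
  \sigma^{\BM}(\overline{X}) = \sigma_c(\overline{\XX}_t),
\]
i.e.\ that the alternating sum $\sum_{p,q}(-1)^q \dim H^{\BM}_q(\overline{X};\F_p)$ of tropical Borel--Moore homology dimensions of the non-singular tropical limit equals the Hodge-theoretic signature $\sum_{p,q}(-1)^p e_c^{p,q}(\overline{\XX}_t)$ of a nearby generic complex fibre.

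The key step is therefore a comparison between tropical homology of $\overline{X}$ and the (compactly supported) mixed Hodge structure of the complex fibre $\overline{\XX}_t$. First I would invoke the fact that for a family degenerating to a non-singular tropical limit, the tropical homology groups $H_q(\overline{X};\F_p)$ compute the associated graded pieces of the limit mixed Hodge structure on $H^k(\overline{\XX}_t)$ with respect to the weight filtration --- this is the content of the results of Itenberg--Katzarkov--Mikhalkin--Zharkov (and its Borel--Moore/compact-support refinements), which identify $\dim_{\F_p} H_q^{\BM}(\overline{X};\F_p)$ with the Hodge number $h^{p,q}$ of the appropriate graded piece of $H^{p+q}_c(\overline{\XX}_t)$ when the tropical limit is smooth. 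Granting this dictionary, $\sum_q (-1)^q \dim H^{\BM}_q(\overline{X};\F_p)$ equals $\sum_q (-1)^q h^{p,q}$ summed over all cohomological degrees, which after rearranging the alternating signs is precisely $(-1)^p \sum_q (-1)^{p+q} h^{p,q} = (-1)^p \sum_k (-1)^k h^{p,q}(H^k_c(\overline{\XX}_t)) = (-1)^p e_c^{p,q}(\overline{\XX}_t)$, summed over $q$; summing further over $p$ yields exactly $\sigma_c(\overline{\XX}_t)$. One has to be careful that $e_c^{p,q}$ does not depend on $k$ once summed, and that the family being meromorphic over $\C\Sigma \times \D^*$ (rather than merely analytic over $(\C^*)^n \times \D^*$) guarantees properness of the relevant compactification so that compactly-supported cohomology and the limit mixed Hodge structure behave well; this is why the corollary is phrased for meromorphic families into $\C\Sigma$.

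The main obstacle I expect is justifying the precise form of the tropical-to-Hodge dictionary in the Borel--Moore / non-compact toric setting with the chosen compactification $\C\Sigma$: one must ensure that the non-singular tropical limit condition together with the fan-compatibility hypothesis of Theorem \ref{Patchworking} implies that $\overline{X}$ is a tropical manifold whose tropical homology genuinely computes the limit MHS of $\overline{\XX}_t$, and that no correction terms arise from the toric boundary strata. Modulo this input --- which should follow from the same semistable reduction machinery already used in Section \ref{subsection NSTL} and in \cite{Rau-RealSemiStable} --- the corollary is a formal bookkeeping exercise with Euler characteristics: expand $\chi^{\BM}$ as the tropical signature via Theorem \ref{thm:boundbetti}, expand the tropical signature via the Hodge dictionary, and recognise the result as $\sigma_c$. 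I would present the argument as: (1) cite Theorem \ref{Patchworking} and Theorem \ref{thm:boundbetti} to get $\chi^{\BM}(\RR\overline{\XX}_t) = \sigma^{\BM}(\overline{X})$; (2) recall the identification of $H^{\BM}_q(\overline{X};\F_p)$ with graded pieces of the limit MHS of $\overline{\XX}_t$; (3) perform the sign bookkeeping to rewrite $\sigma^{\BM}(\overline{X})$ as $\sigma_c(\overline{\XX}_t)$.
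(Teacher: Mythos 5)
Your reduction is set up correctly: step (1), citing \Cref{Patchworking} and \Cref{thm:boundbetti} to get $\chi^\BM(\RR\overline{\XX}_t)=\sigma^\BM(\overline{X})$, is exactly what the paper does, and you correctly identify that what remains is the purely complex/tropical comparison $\sigma^\BM(\overline{X})=\sigma_c(\overline{\XX}_t)$. The genuine gap is in how you propose to prove that comparison. You want to cite an IKMZ-type dictionary identifying $\dim H^\BM_q(\overline{X};\F_p)$ with Hodge numbers of (graded pieces of) the limit mixed Hodge structure on $H^{p+q}_c(\overline{\XX}_t)$. No such result is available in the generality needed here: the IKMZ theorem concerns smooth tropical limits compactified in projective space, with $\QQ$-coefficients and pure Hodge structures, and the paper itself points this out right after \Cref{prop:tropicalepoly} (``If $\C\Sigma = \CP^n$, the statement follows from \cite{IKMZ}''). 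For a general pointed unimodular fan $\Sigma$ the fibres $\overline{\XX}_t$ are not proper (your remark that meromorphy over $\C\Sigma\times\D^*$ gives properness is incorrect unless $\Sigma$ is complete), the relevant invariants are the mixed numbers $e_c^{p,q}$, and an identification of individual tropical Borel--Moore groups with individual $h^{p,q}(H^k_c)$ is stronger than what is true at this level of generality --- only an Euler-characteristic (coefficient-of-$y^p$) statement can be expected. So the ``modulo this input'' caveat in your proposal is precisely where the substance of the proof lies, and that input cannot simply be cited.

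What the paper actually proves is the weaker but sufficient statement \Cref{prop:tropicalepoly}: the equality of Hirzebruch genera $\chi_y(\overline{\XX}_t)=\chi_y(\overline{X})$, from which the corollary follows by evaluating at $y=-1$ (this evaluation is the sign bookkeeping you describe, and it is fine). The proof of \Cref{prop:tropicalepoly} is where the work happens: both sides are shown to be motivic with respect to the decomposition of $\C\Sigma$, resp.\ $\T\Sigma$, into torus orbits, reducing to the case $\Sigma=\{0\}$; then the Katz--Stapledon formula expresses $\chi_y(\XX_t)$ as an alternating sum of $\chi_y(\init_\sigma\XX_t)$ over bounded faces $\sigma$ of a tropical subdivision, the analogous tropical identity is obtained using tropical Poincar\'e duality, and the resulting local (linear/matroidal) case is handled by identifying both sides with the reduced characteristic polynomial of the matroid (Katz--Stapledon on the classical side, Zharkov and Orlik--Terao on the tropical side). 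To repair your proposal you would either have to reproduce this argument or prove the Borel--Moore/toric extension of the Hodge-theoretic dictionary yourself; as written, step (2) is a missing theorem rather than a citable fact.
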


This equality was already proved in the case of codimension one by Bertrand \cite{Bertrand10} (see also \cite{Arnal}), and  by Brugallé in the same situation as above  by utilising a  different point of view \cite{brugalle2021euler}.

In a recent paper \cite{ambrosimanzaroli}, the authors generalized the ideas of \cite{RS} beyond the case of tropical varieties. They proved that the same bounds still apply for any totally real semi-stable degeneration satisfying strong homological properties. Essentially what they require is that the open strata of the special fiber are all maximal in the sense of the Smith-Thom inequality, the real parts only have non-trivial homology in degree $0$, and that the cohomology of the complex open strata satisfy a strong condition on their mixed Hodge structures.  These properties are satisfied in our context and so, up to a  comparison of tropical homology and the complexes obtained by Ambrosi and Manzaroli, this eventually gives another proof of \Cref{cor:boundsSemiStable}.

We would also like to mention the work of Brugallé, López de Medrano and the first author \cite{BLdMR-CombinatorialPatchworkingBack}.
This work transfers tropical homology, real phase structures, and the Betti number bounds of \Cref{thm:boundbetti} 
to the original framework of Viro's patchworking method, namely the framework of (unimodular) triangulations
of lattice polytopes. An overlap with the present work is given if the triangulation is assumed to be convex. Then the objects of study 
in \cite{BLdMR-CombinatorialPatchworkingBack} correspond to self-intersections of a
tropical hypersurface associated to the triangulation (and the correspondence can supposedly extended to complete intersections
using mixed subdivisions). 
So while \cite{BLdMR-CombinatorialPatchworkingBack} also covers non-convex triangulations which do not
have a tropical counterpart, the present approach also treats non-complete intersections
which seem to have no counterpart on the triangulation side. 

\section*{Acknowledgements}

The authors are grateful to Erwan Brugallé, Ilia Itenberg, Oleg Viro for helpful conversations and comments. 

This research was supported in part by the Trond Mohn Foundation project ``Algebraic and Topological Cycles in Complex and Tropical Geometries''. 
 We also acknowledge the support of the Centre for Advanced Study (CAS) in Oslo, Norway, which funded and hosted the Young CAS research project Real Structures in Discrete, Algebraic, Symplectic, and Tropical Geometries (REACTIONS) during the 2021-23 academic years.  

Johannes Rau was supported by the FAPA project ``Matroids in tropical geometry'' from the Facultad
de Ciencias, Universidad de los Andes, Colombia.

Arthur Renaudineau acknowledges support from the Labex CEMPI (ANR-11-LABX-0007-01), and from ANR, project ANR-22-CE40-0014.

\section{Real phase structures} 

\label{sec:realtropspaces}

\subsection{Tropical toric varieties}
Given a polyhedron $\sigma \subset \R^n$ we denote by $T(\sigma)$ its
tangent space, that is, the vector subspace of $\R^n$ generated by
all vectors $u-v$, $u,v \in \sigma$. We are often interested 
in the quotient space $\R^n / T(\sigma)$ for which we hence use the shorthand 
$\R(\sigma)$.
If $\sigma$ is a rational polyhedron, i-e the supporting hyperplanes are defined over $\QQ$,
we denote by $T_\Z(\sigma) = T(\sigma) \cap \Z^n$ its \emph{tangent lattice},
and set $\Z(\sigma) = \Z^n / T_{\Z}(\sigma)$. 
Finally, for a $\Z$-module $G$, we set $T_G(\sigma) = T_\Z(\sigma) \otimes G$
and $G(\sigma) = G^n / T_G(\sigma)$.

Let $\Sigma$ be a rational polyhedral pointed fan in $\R^n$.
Given $\rho \in \Sigma$, we denote the dual cone by
\[
  \rho^\vee = \{\lambda \in (\R^n)^\vee : \lambda|_\rho \geq 0 \} \subset (\R^n)^\vee.
\]
We denote by $\T U_\rho = \Hom(\rho^\vee \cap (\Z^n)^\vee, \T)$ the space
of semigroup homomorphisms, where $\T=\RR\cup\left\lbrace -\infty \right\rbrace$ endowed with the addition is the \emph{tropical} semigroup.  We equip it with the coarsest topology 
such that for all $\lambda \in \rho^\vee \cap (\Z^n)^\vee$ 
the evaluation map $\T U_\rho \to \T$, $\alpha \mapsto \alpha(\lambda)$, 
is continuous. If $\rho \subset \eta$, we 
have by restriction a continuous map $\T U_\rho \to \T U_\eta$. 

\begin{definition}\cite[Chapter 3]{MikRau} \cite[Section 3]{PayneAnalytification}
  The tropical toric variety associated to the fan $\Sigma$ 
	is the direct limit
	\[
	 \T \Sigma := \varinjlim_{\rho \in \Sigma} \T U_\rho.
	\]
	We call $\T \Sigma$ \emph{non-singular} 
	if $\Sigma$ is unimodular (and hence simplicial).
\end{definition}

The space $\T \Sigma$ is compact if and only if the corresponding fan is complete. 
The action of $\R^n$ on itself extends to a group action on $\T \Sigma$
whose orbits are given by $\T \OOO_\rho = \R(\rho) \cong \Hom(\rho^\perp, \T)
\subset \T U_\rho$ (the inclusion is given by extension by $-\infty$).
Notice that $\dim \T \OOO_{\rho} = n- \dim \rho$.
Given a point $y \in \T \OOO_\rho$, we call $\rho$ the \emph{sedentarity} of $y$. 
The \emph{order of sedentarity} of $y$, denoted by $\sed(y)$, is set to be $\dim(\rho)$.

\begin{example} \label{exAffineSpace}
  Tropical affine space $\T^n$ is the tropical toric variety associated 
	to the fan that consists of the negative orthant in $\R^n$ 
	and all its faces. 
	Given a subset $A$ of $E = \{1, \dots, n\}$, 
	we denote by $\rho_A$ the cone generated by $\{-e_i, i \in A\}$, where the $e_i$ are the vectors of the standard basis of $\R^n$.
	Then the torus orbit
	$\R^n_A := \T\OOO_{\rho_A}$ is equal to $\R^{E \setminus A}$, while its  closure in $\T^n$, denoted by $\T^n_A$, 
	can be identified with $\T^{E \setminus A}$. \\
	Similarly, we set $e_0 = -e_1 - \dots - e_n$
	and for any \emph{proper} subset  $A \subset \{0, \dots, n\}$
	we let $\sigma_A$ be the cone generated by $\{-e_i, i \in A\}$.
	The collection of these cones forms a complete fan defining
	the tropical toric variety $\TP^n$. 
	We denote the closure of $\T\OOO_{\rho_A}$ in $\TP^n$ by 
	$\TP^n_A$. 
	It can be identified with a projective space $\TP^{n-|A|}$
	(whose homogeneous coordinates are labelled by $E \setminus A$). 
\end{example}

\begin{definition}
  Given two fans $\Sigma$ in $\R^n$ and $\Sigma'$ in $\R^m$, 
	a \emph{morphism of fans} 
	$A \colon \Sigma \to \Sigma'$ is a $\Z$-linear map $A \colon \R^n \to \R^m$
	such that $A(\sigma) \in \Sigma'$ for all $\rho \in \Sigma$. 
	
  Let $\T \Sigma$ and $\T \Sigma'$ denote the tropical toric varieties associated 
	to $\Sigma$ and $\Sigma'$. 
	A \emph{(toric) morphism} $\psi \colon \T \Sigma \to \T \Sigma'$  is 
	a continuous map that sends $\R^n \subset \T \Sigma$ to 
	$\R^m \subset \T \Sigma'$ and such that the restriction
	$\R^n \to \R^m$ is of the form $x \mapsto Ax + v$ where
	$A$ is a morphism of fans from $\Sigma$ to $\Sigma'$.
\end{definition}

We note that given $x \mapsto Ax + v$, there is a unique continuous extension
$\psi \colon \T \Sigma \to \T \Sigma'$. Moreover, this extension sends
the torus orbit $\T\OOO_\rho$ to the torus orbit $\T\OOO_{A(\rho)}$ 
and the restriction of $\psi$ to these orbits is given by the induced map
\[
  \R(\rho) \to \R(A(\rho)), [x] \mapsto [Ax +v].
\]

\begin{example} 
  Let $\T \Sigma$ be a tropical toric variety.	The open subsets $\T U_\rho$
	are toric varieties themselves (their defining fan consists of $\rho$ and all its faces). 
	If $\rho$ is a unimodular cone generated by the first $k$ vectors
  of the $\Z^n$-basis $v_1, \dots, v_n$, then 
	$$\rho^\vee \cap (\Z^n)^\vee = 
	\langle v_1^*, \dots, v_k^*, \pm v_{k+1}^*,  \dots, \pm v_{n}^*\rangle
	\cong \NN^k \times \Z^{n-k}$$ as a cone, and hence
	$\T U_\rho \cong \T^k \times \R^{n-k}$. 
  Hence if $\T \Sigma$ is non-singular, it is covered
	by toric open subsets isomorphic to $\T^k \times \R^{n-k}$
	(for varying $k$). 
\end{example}

\begin{example} 
  The only automorphisms of $\T^n$ are coordinate permutations 
	composed with translations, so $\text{Aut}(\T^n) = \text{Sym}_n \ltimes \R^n$.
	This follows from the fact that (fan) automorphisms of the 
	negative orthant of $\R^n$ must permute its rays.
\end{example}

\begin{remark} \label{rem:Semigroups}
  We focused here on tropical toric varieties, but 
	the given construction of  toric varieties makes sense
	for any semigroup $S$
	(the given description of torus orbits requires additionally that $S$ contains 
	a single non-invertible element).
	In particular, we will later use the cases $S = \C, \R, \Rnn = [0, \infty)$
	(all with respect to multiplication) and (briefly) a few more. 
	The toric variety of the fan $\Sigma$ over the semigroup $S$ will be denoted by $S \Sigma$. 
\end{remark}

\subsection{Polyhedral complexes} \label{subsectionPolyhedralComplexes}

A rational polyhedron in a tropical toric variety $\T \Sigma$ is the 
closure in $\T \Sigma$ of a rational polyhedron in some stratum $\T \OOO_{\rho}$. 
For a polyhedron $\sigma$ in $\T \Sigma$, the intersection $\sigma \cap \T \OOO_\eta$,
if non-empty, is a polyhedron in $\T \OOO_\eta$. A \emph{face} $\tau$ of $\sigma$
is the closure of a face of any of these intersections (hence $\tau$ is a polyhedron in $\T \Sigma$).
The \emph{relative interior} $\relint(\sigma)$ is the set of points in $\sigma$ not contained in
a proper face. We set $\sed(\sigma) := \sed(y)$ for an arbitrary $y \in \relint(\sigma)$.
Therefore, we have  $\sed(\sigma) = \dim(\rho)$ referring to $\rho$ from above. 

A \emph{rational polyhedral complex} $\mathcal{X}$ in $\T \Sigma$ is a collection of rational polyhedra in $\T \Sigma$ such that 
\begin{itemize}
	\item if $\sigma \in \X$ and $\tau \subset \sigma$ is a face, then $\tau \in \X$,
	\item if $\sigma_1, \sigma_2 \in \X$ and $\sigma_1 \cap \sigma_2 \neq \emptyset$, then $\sigma_1 \cap \sigma_2$ is a face of $\sigma_1$ and $\sigma_2$. 
\end{itemize}
We refer to the elements $\sigma \in \X$ as \emph{faces} of $\X$. 
The maximal faces of $\X$ are called \emph{facets}. 
The \emph{support} $|\X|$ of $\X$ is the union of all the faces of $\X$. 
Given a polyhedral complex $\X$, a \emph{subdivision} of $\X$ is 
a polyhedral complex $\X'$ with the same support
and such that any face of $\X'$ is contained in some face of $\X$.
A \emph{rational polyhedral subspace} $X$ of $\T \Sigma$ is a 
subspace which is the support of a rational polyhedral complex $\mathcal{X}$.
Note that two polyhedral complexes have the same 
support if and only if they have a common subdivision.
Hence, alternatively we can think of a polyhedral 
subspace as an equivalence class of polyhedral complexes
under the equivalence relation induced by subdivision. 
We allow ourselves to call 
$\mathcal{X}$ a subdivision of the polyhedral space $|\X|$.

A rational polyhedral subspace $X \subset \T\Sigma$ is called 
of \emph{pure sedentarity $\rho \in \Sigma$} 
if $X = \overline{X \cap \T\OOO_\rho}$. 
A polyhedral complex is of \emph{pure sedentarity $\rho \in \Sigma$} 
if $|\X|$ is of pure sedentarity $\rho$. 
A polyhedral complex $\X$ is \emph{of pure dimension $d$} if all facets have dimension $d$. 
Clearly, the notion is preserved under subdivision and hence 
extends to polyhedral spaces $X$.
  In the following, polyhedral complexes $\X$ and polyhedral spaces $X$ 
	are \emph{always} assumed to be rational, of pure dimension,
	and of pure sedentarity. 

Let $\X$ be a polyhedral complex in $\R^n$ and $\tau \in \X$. 
By $\Star_\tau \X$ we denote the fan in $\R(\tau)$
whose cones are given by
\begin{align*} 
  \frac { \sigma + T(\tau)}{T(\tau)}, && \tau \subseteq \sigma \in \X.  
\end{align*}
  If $\X$ is a polyhedral complex in a tropical toric variety and $\tau \in \X$
of sedentarity $\rho$, we set 
$\Star_\tau \X = \Star_{\tau \cap \T \OOO_\rho}(\X \cap \T \OOO_\rho)$. 
We note that $\T \Star_\rho \Sigma$ is naturally embedded in 
$\T\Sigma$ as the closure of the 
torus orbit $\T \OOO_\rho$.

Given a polyhedron $\sigma \subset \R^n$, its \emph{recession cone}
is defined by 
\[
  \RecCone(\sigma) = \{v \in \R^n : \lambda v + p \in \sigma 
	\;\forall\; p \in \sigma, \lambda \in \Rnn \}.
\]
If $\sigma \subset \T \Sigma$ is a polyhedron of sedentarity $\rho$, we set
$\RecCone(\sigma) = \RecCone(\sigma \cap \T \OOO_\rho)$. 
Given a polyhedral complex $\X$, we set $\RecCone(\X) = \{\RecCone(\sigma) : \sigma \in \X\}$.

Let us assume that $\X$ is a polyhedral complex in $\R^n$ such that 
$\Sigma = \RecCone(\X)$ is a fan in $\R^n$. Let us additionally assume that
$\X$ contains a vertex, or equivalently, that $\Sigma$ is pointed. 
We will now define a polyhedral complex $\overline{\X}$
in $\T\Sigma$ with the property that $|\overline{\X}| = \overline{|\X|}$
and $\overline{\X} \cap \R^n = \X$. 
The construction is sometimes called the canonical compactification of
$\X$, cf.\ \cite{KSW}, \cite[Section 2.4]{AP}. 
We set 
\begin{equation} \label{eq:CanonicalComp} 
  \overline{\X} := 
	  \{ \overline{\sigma} \cap \T\Star_\rho \Sigma : \sigma \in \X, \rho \in \Sigma\}.
\end{equation}
Note that, for example by \cite[Proposition 3.2.4]{MikRau},
the intersection $\overline{\sigma} \cap \T\Star_\rho \Sigma$ is non-emtpy
if and only if $\rho \subset \RecCone(\sigma)$,
and moreover, in this case 
$\overline{\sigma} \cap \T\Star_\rho \Sigma$ is the closure in $\T\Star_\rho \Sigma$
of 
\[
  \sigma/T(\rho) \subset \R(\rho) = \T\OOO_\rho.
\]
For further reference, we note that the labelling of faces of $\overline{\X}$ 
by pairs $(\sigma, \rho)$ provides 
an isomorphism between the poset $\overline{\X}$ 
and the subposet of $\X \times \Sigma$ 
given by
\begin{equation} \label{eq:PosetCompactification}
  \{(\sigma, \rho) \in \X \times \Sigma : \rho \subset \RecCone(\sigma)\},
\end{equation}
equipped with the product order, that is, 
$$(\sigma', \rho') \leq (\sigma, \rho) \Longleftrightarrow 
\sigma' \leq \sigma \text{ in }  \X, \rho \leq \rho' \text{ in } \Sigma.$$

We note at this point that throughout the following, given a polyhedral complex
$\X$ we regard it as a poset whose partial order is given by inclusion. 
We will sometimes refer to this as the \emph{face poset} of $\X$ to emphasize that
we forget all geometric information and only consider the poset structure.

\subsection{Real phase structures on polyhedral subspaces}

Given a polyhedron $\sigma$ in $\T \Sigma$ of sedentarity $\rho$, 
we define its \emph{tangent space} by
$T(\sigma) := T(\sigma \cap \T\OOO_\rho) \subset \T \OOO_\rho = \R(\rho).$
Its integer and binary versions are
\begin{align*} 
  T_\Z(\sigma) &:= T(\sigma) \cap \Z(\rho) \subset \Z(\rho), \\ 
	T_{\Z_2}(\sigma) &:= T_\Z(\sigma) \otimes \Z_2 \subset 
	\Z_2(\rho).
\end{align*}
For a vector space $V$ over $\Z_2 := \Z / 2\Z$,
we denote by $\Aff_d(V)$ the set of all affine subspaces in $V$ of dimension $d$.

\begin{definition}\label{def:evencovering}
A collection of subsets $S_1, \dots, S_k \subset V$ is called an \emph{even covering} if every element in $V$ is contained in an even number of the sets.
Equivalently, 
\[
  S_1 \triangle \dots \triangle S_k = \emptyset,
\]
where $S \triangle T := (S \cup T) \setminus (S \cap T)$ is the symmetric difference. 
\end{definition}

\begin{definition} \label{defGeneralRealPhaseStr}
Let $\mathcal{X}$ be a polyhedral complex of pure dimension $d$ and sedentarity $\rho$ contained in a tropical toric variety $\T \Sigma$. 
A \emph{real phase structure} on $\mathcal{X}$ is a map 
\[
  \mathcal{E} \colon \Facets(\X)  \to \Aff_d(\Z_2(\rho))
\]
satisfying
\begin{enumerate}
\item for every facet $\sigma$ of $\mathcal{X}$, the set $ \mathcal{E}(\sigma) $ is an affine subspace of $\Z_2(\rho)$ parallel to $\sigma$, in formulas, $T(\EEE(\sigma)) = T_{\Z_2}(\sigma)$; 
\item for every codimension one face $\tau$ of $\mathcal{X}$ with facets $\sigma_1, \dots , \sigma_k$ adjacent to it, the sets $\E(\sigma_1), \dots, \E(\sigma_k)$ are an even covering, 
\end{enumerate}

A real phase structure on a polyhedral subspace $X \subset \T \Sigma$ is
a real phase structure on some subdivision $\mathcal{X}$ of $X$. 
\end{definition}

\begin{remark} \label{RemSubdivision}
  If $\X'$ is a subdivision of $\mathcal{X}$, 
	there is a bijection between the set of real phase structures  on $\mathcal{X}$ and those on  $\mathcal{X}'$. 
	This bijection  is  given  by	$\E'(\sigma') = \E(\sigma)$ if $\sigma$ is the face of $\X$ containing $\sigma'$ in its relative interior.
	It is obvious that this assignment is injective, and 
	condition 2)  in Definition \ref{defGeneralRealPhaseStr} for a real phase structure on $\X'$ ensures that it is surjective. In fact, if a codimension one face $\tau$ is adjacent to exactly two facets $\sigma_1$ and $\sigma_2$, then it follows from condition 2) that $\E(\sigma_1)=\E(\sigma_2)$.
	Therefore, a real phase structure on a polyhedral subspace $X$ is a real phase structure on 
	any polyhedral complex $\X$ such that $|\X| = X$.
\end{remark}

\subsection{Tropical patchworking}\label{sec:tropicalPatchworking}

Following \cite[Chapter 11]{GKZ}, we now describe how to obtain a space homeomorphic to 
the real part of a complex toric variety by glueing together multiple symmetric copies of the tropical toric variety.
Let $\Sigma$ be a pointed  polyhedral fan in $\R^{n}$ defining a tropical toric variety $\T \Sigma$. 
For every $\varepsilon \in \Z_2^{n}$, we denote by  $\T \Sigma(\varepsilon)$ 
a copy of $\T \Sigma$ indexed by $\varepsilon$. We then define
\begin{equation}\label{eq:tropicalrealtoric}
\PW \T \Sigma:=  \bigsqcup_{\varepsilon \in \Z_2^{n}} \T \Sigma(\varepsilon) / \sim,  
\end{equation}
where $\sim$ identifies strata $\T\Star_\rho \Sigma(\varepsilon)$ and 
$\T\Star_\rho \Sigma(\varepsilon')$ if and only if $\varepsilon +  \varepsilon'$ is in 
$T_{\Z_2}(\rho)$.

We can also give a more precise description of  $\PW \T \Sigma$ as a direct limit of topological spaces. 
Consider the poset
\[
  Q(\Sigma) := \{(\rho, \varepsilon) : \rho \in \Sigma, \varepsilon \in \Z_2(\rho)\}
\]
with partial order given by 
\[
  (\rho, \varepsilon) \leq (\eta, \delta) :\Longleftrightarrow 
	\eta \subset \rho, \varepsilon = \pi(\delta),
\]
where $\pi \colon \Z_2(\eta) \to \Z_2(\rho)$ denotes
the canonical projection.
We construct a direct system of topological spaces over $Q(\Sigma)$ by
assigning to $(\rho, \varepsilon)$ a copy of the toric variety $\T\Star_\rho \Sigma$,
denoted by $\T\Star_\rho \Sigma(\varepsilon)$.
For $(\rho, \varepsilon) \leq (\eta, \delta)$, the corresponding map is given by the inclusion
$\T\Star_\rho \Sigma \subset \T\Star_\eta \Sigma$.
Then $\PW \T \Sigma$ is equal to the limit of this direct system. 
In particular, we note 
that $\PW \T \Sigma$ contains $2^{n-\dim \rho}$ copies of $\T\Star_\rho \Sigma$
(and hence of $\T\OOO_\rho$) labelled by $\varepsilon \in \Z_2(\rho)$.

\begin{remark} \label{rem:ExtendedLog}
  The construction can also be understood using the formalism of
	toric varieties over arbitrary semigroups, cf.\ \Cref{rem:Semigroups}.
	Consider the semigroup $S = \T \times \Z_2$. 
	The toric variety $S\Sigma$ is the disjoint union 
	$\bigsqcup_{\varepsilon \in \Z_2^{n}} \T \Sigma(\varepsilon)$
	appearing in \Cref{eq:tropicalrealtoric}.
	The quotient of $S$ under $\{-\infty\} \times \Z_2$ is called 
	the semigroup (more often, semifield or hyperfield) of \emph{real}
	tropical numbers which we denote by $\PW\T$, see \cite{ViroBasic}. As a set, 
	$\PW\T = \T \cup_{-\infty} \T$ is a union of two copies of $\T$ along $-\infty$. 
	Then $\PW\T\Sigma$ is
	in fact the toric variety associated to the semigroup $\PW\T$, 
	and the glueing described in \Cref{eq:tropicalrealtoric} is coming from
	the map $S\Sigma \to \PW\T\Sigma$ induced by the quotient homomorphism
	$S \to \PW \T$. 
	
	To explain the connection to the real toric variety $\R\Sigma$, first note that 
	$\log \colon (\Rnn, \cdot) \to (\T, +)$ 
	(with $\log(0) = -\infty$) is an isomorphism of semigroups. 
	Moreover, $\R$ can be described as the semigroup quotient of $\Rnn \times \Z_2$
	modulo $\{0\} \times \Z_2$. Hence we get an extended semigroup homomorphism
	$\log^\PW \colon \R \to \PW\T$. 
	These semigroup homomorphisms yield homeomorphisms
	$\Log \colon \Rnn\Sigma \to \T\Sigma$ and $\Log^{\PW} \colon \R \Sigma \to \PW\T\Sigma$. 
	We also note that this construction is a tropical reformulation of 
	for example \cite[Theorem 11.5.4]{GKZ}.
\end{remark}

\begin{example} \label{ex:realTn}
  We consider the tropical affine space $\T \Sigma = \T^n$.
	In continuation of the notation introduced \Cref{exAffineSpace},
	given $A \subset E = \{1, \dots, n\}$ and $\varepsilon \in \Z_2^{E \setminus A}$, 
	we denote by $\T^n_A(\varepsilon)$ the corresponding copy of $\T^n_A$ in 
	$\PW \T^n$.
	Instead of tuples $(A, \varepsilon)$, 
	we can alternatively label the strata of 
	$\PW \T^n$ 
	by the set of \emph{signed vectors} $C \in \{0,+1,-1\}^n$. 
	The connection is given by setting $A = E \setminus \text{supp}(C)$
	and $C|_{\text{supp}(C)} = (-1)^\varepsilon$. 
	The stratification agrees with the ordinary sign stratification
	of $\R^n$ under the homeomorphism $\Log^\PW \colon \R^n \to \PW \T^n$.
	For $i = 1, \dots, n$, we use the shorthand 
	$H_i := \bigcup_{\varepsilon \in \Z_2^{E \setminus \{i\}}} \T^n_{\{i\}}(\varepsilon) 
	\subset \PW \T^n$. Under $\Log^\PW$, this subset corresponds
	to the coordinate hyperplane $\{x_i = 0\} \subset \R^n$. 
\end{example}

Given a polyhedron $\sigma$ in $\T\Star_\rho \Sigma \subset \T \Sigma$ and 
$\varepsilon \in \Z_2(\rho)$, we let $\sigma(\varepsilon)$ 
denote the copy of $\sigma$ in $\T\Star_\rho \Sigma(\varepsilon) \subset \PW \T \Sigma$. 
We extend the notion of polyhedral complex to $\PW \T \Sigma$ 
be declaring the sets $\sigma(\varepsilon)$ to be the polyhedra of $\PW \T \Sigma$.

Let $\X$ be a  polyhedral complex 
of sedentarity $\rho$ in $\T \Sigma$ 
together with a real phase structure $\mathcal{E}$.
Given an arbitrary face $\tau \in \X$ of sedentarity $\eta$, we set
\[
  \EEE(\tau) = \bigcup_{\substack{\sigma \in \Facets(\X) \\ \tau \subset \sigma}}
	  \pi(\EEE(\sigma)),
\]
where $\pi$ denotes the canonical projection 
$\pi \colon \Z_2(\rho) \to \Z_2(\eta)$. 
Note that in general $\EEE(\tau)$ is not an affine subspace of $\Z_2(\eta)$.

\begin{defi}\label{def:realpart}
Let $\X$ be a  polyhedral complex in $\T \Sigma$ together with a real phase structure $\mathcal{E}$. 
The \emph{patchwork} $(\X,\EEE)$
is the polyhedral complex $\PW(\X,\mathcal{E})$ in $\PW \T \Sigma$ 
given by
\[
  \PW(\X,\mathcal{E}) = 
	  \{ \sigma(\varepsilon) : \sigma \in \X, \varepsilon \in \EEE(\sigma) \}.
\]
Let $X$ be a polyhedral subspace in $\T \Sigma$ together with a real phase structure $\EEE$. 
The \emph{patchwork} of $X$ with respect to $\EEE$ is the polyhedral subspace 
\[
  \PW (X,\mathcal{E}) := |\PW(\X,\mathcal{E})| \; \subset \PW \T \Sigma
\]
for some subdivision $\X$ of $X$. 
\end{defi}
If the real phase structure $\EEE$ is clear from the context, 
we allow ourselves to write $\PW\X$ and $\PW X$ instead of 
$\PW (\X,\mathcal{E})$ and $\PW (X,\mathcal{E})$.

\begin{remark} \label{remSubdivisionRealPart}
  Let $\X'$ be a subdivision of $\X$ with induced real phase structure $\EEE'$ 
	as explained in Remark \ref{RemSubdivision}. 
	Then clearly $\PW(\X',\EEE')$ is a subdivision of $\PW(\X,\EEE)$. 
	In particular, 
	$|\PW(\X,\EEE)| = |\PW(\X',\EEE')|$
	and hence $\PW (X,\mathcal{E})$ is well-defined. 
\end{remark}

\begin{proposition} \label{porRealPartClosed}
  Let $\X$ be a  polyhedral complex in $\T \Sigma$ together with a real phase structure $\mathcal{E}$. 
  The polyhedral complex $\PW(\X,\EEE) \subset \PW \T \Sigma$ represents a 
	closed cellular chain in the homology of  $\PW \T \Sigma$ over $\Z_2$. 
\end{proposition}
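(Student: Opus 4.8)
The plan is to show that the cellular boundary of the chain $\sum_{\sigma(\varepsilon) \in \PW(\X,\EEE)} \sigma(\varepsilon)$ (taken with $\Z_2$ coefficients, so no orientation bookkeeping is needed) vanishes. Since a cellular $d$-chain is a cycle if and only if, around every $(d-1)$-cell, the chain contains an even number of incident $d$-cells, the whole statement reduces to a local computation near each codimension-one face of $\PW(\X,\EEE)$. So first I would fix a $(d-1)$-dimensional polyhedron of $\PW \T \Sigma$ lying in the chain's support; by the definition of polyhedra in $\PW \T \Sigma$ it has the form $\tau(\delta)$ for some codimension-one face $\tau \in \X$ of sedentarity $\eta$ and some $\delta \in \Z_2(\eta)$. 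I would then enumerate the $d$-cells $\sigma(\varepsilon) \in \PW(\X,\EEE)$ containing $\tau(\delta)$ and check that their number is even.

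The enumeration splits according to the sedentarity of the facet $\sigma$ of $\X$ containing $\tau$. For facets $\sigma$ of the same sedentarity $\rho = \eta$ as $\tau$: these are exactly the facets of $\X$ adjacent to $\tau$ in the usual (non-compactified) sense, say $\sigma_1, \dots, \sigma_k$, and the copies $\sigma_i(\varepsilon)$ in the patchwork containing $\tau(\delta)$ are those with $\varepsilon \in \EEE(\sigma_i)$ and $\varepsilon$ restricting to $\delta$ — but since $\sed(\sigma_i) = \sed(\tau)$ there is no restriction, so the count is $\#\{(i,\varepsilon) : \varepsilon \in \EEE(\sigma_i)\}$ which is even precisely by condition (2) of Definition \ref{defGeneralRealPhaseStr} (the even covering property): each $\varepsilon \in \Z_2(\eta)$ lies in an even number of the $\EEE(\sigma_i)$. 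For facets $\sigma$ of strictly smaller sedentarity than $\tau$ (i.e. $\tau$ sits on the boundary/compactification of $\sigma$): here $\tau = \overline{\sigma'} \cap \T\Star_\eta\Sigma$ for a unique facet $\sigma'$ of $\X \cap \T\OOO_{\sed(\sigma)}$ with $\eta \subset \RecCone(\sigma')$, using the poset description in \eqref{eq:PosetCompactification}; the copies of $\sigma$ containing $\tau(\delta)$ are labelled by $\varepsilon \in \EEE(\sigma)$ with $\pi(\varepsilon) = \delta$ under $\pi\colon \Z_2(\sed\sigma) \to \Z_2(\eta)$, and since $\EEE(\sigma)$ is an affine subspace of $\Z_2(\sed\sigma)$ parallel to $\sigma$ (condition (1)) whose image contains $\delta$ (because $\tau(\delta) \subset \overline{\sigma(\varepsilon)}$ for some such $\varepsilon$), the preimage $\EEE(\sigma) \cap \pi^{-1}(\delta)$ is a coset of $T_{\Z_2}(\sigma) \cap \ker\pi$, hence has cardinality $2^{\dim \tau - \dim(\text{image})}$; the key point is that this is a power of two that is either $1$ or even, and I expect that a careful bookkeeping shows these contributions pair up (or contribute evenly) — this is where I would be most careful.

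The main obstacle, then, is exactly handling the facets $\sigma$ of lower sedentarity than $\tau$: one must verify that the mixed contributions — from facets of sedentarity equal to $\eta$ and from facets of strictly smaller sedentarity — together add up to an even number, not just each family separately. The cleanest way around this is probably a reduction: replace $\X$ by a subdivision and $\tau(\delta)$ by passing to the link, or better, observe that the patchwork commutes with taking stars (Definition \ref{def:realpart} is compatible with the $\Star_\tau$ operation) so that the local picture at $\tau(\delta)$ is itself the patchwork of a lower-dimensional polyhedral fan, reducing to the case where $\tau$ is a point and the even-covering condition applies directly after pushing everything to sedentarity $\eta$ via the projections $\pi$. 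Concretely: a $d$-cell of $\PW(\X,\EEE)$ contains $\tau(\delta)$ iff its image in $\T\Star_\eta\Sigma(\delta)$ is a facet of the star containing the vertex $\tau\cap\T\OOO_\eta$, and the real phase structure induced on $\Star_\tau\X$ (whose facets are $\pi(\EEE(\sigma))$ for $\sigma \supset \tau$) inherits the even-covering property from condition (2) applied in $\X$; summing multiplicities $|\EEE(\sigma)\cap\pi^{-1}(\delta)|$ over facets $\sigma \supset \tau$ then equals, modulo $2$, the number of facets of $\Star_\tau\X$ whose phase contains $\delta$, which is even. This finishes the proof once the compatibility of patchworking with stars is spelled out.
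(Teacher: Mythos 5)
Your overall framing (a $\Z_2$-cellular chain is a cycle if and only if every codimension-one cell lies in an even number of facets) and your treatment of the codimension-one cells whose sedentarity equals that of $\X$ agree with the paper: there Condition (2) of Definition \ref{defGeneralRealPhaseStr} applies verbatim. Note also that your worry about ``mixed contributions'' is vacuous: by the standing convention that $\X$ is of pure sedentarity, all facets have the sedentarity of $\X$, so for a fixed codimension-one face $\tau$ either all incident facets share its sedentarity or all have strictly smaller sedentarity; the two counts never have to be combined.

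The genuine gap is in the higher-sedentarity case, where your proposed resolution fails. You correctly reduce the count for a fixed facet $\sigma$ (say of sedentarity $0$) with $\overline{\sigma}\supset\tau$, $\sed(\tau)=\eta$, to the fibre $\EEE(\sigma)\cap\pi^{-1}(\delta)$, a coset of $T(\EEE(\sigma))\cap\ker\pi=T_{\Z_2}(\sigma)\cap T_{\Z_2}(\eta)$. The decisive point you are missing is that this group is never trivial: since $\tau$ has codimension one, $\RecCone(\sigma)$ meets $\relint(\eta)$ and $\RecCone(\sigma)\cap\eta$ is a ray; its primitive generator reduces to a nonzero $v\in T_{\Z_2}(\sigma)\cap T_{\Z_2}(\eta)$, so $\varepsilon'\mapsto\varepsilon'+v$ is a fixed-point-free involution on each nonempty fibre and \emph{every single facet} contributes an even number. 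This is exactly the paper's argument, and it uses only Condition (1), not an even-covering statement. Your alternative route is incorrect on both counts: the congruence ``$\sum_{\sigma\supset\tau}|\EEE(\sigma)\cap\pi^{-1}(\delta)|\equiv\#\{\sigma\supset\tau:\delta\in\pi(\EEE(\sigma))\}\pmod 2$'' presupposes that the nonempty fibres have odd cardinality, whereas they are all even; and the asserted even-covering property of the projected phases $\pi(\EEE(\sigma))$ at a face of higher sedentarity is false in general, since Condition (2) only governs codimension-one faces inside the open stratum. A concrete counterexample to both claims is an end of a tropical line in $\TP^2$ with the phase structure of a real line: the boundary vertex $\tau$ at infinity lies in the closure of exactly one facet $\sigma$, and $\pi(\EEE(\sigma))$ is a single point of $\Z_2(\eta)$, covered once (odd), while $|\EEE(\sigma)\cap\pi^{-1}(\delta)|=2$. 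So the evenness at infinity cannot be deduced from a star/even-covering argument; it must come from Condition (1) via the recession-cone vector $v$ as above.
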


Note that the associated homology	class $[\PW(\X,\EEE)] \in H^{BM}_d(\PW \T \Sigma, \Z_2)$ is invariant under subdivisions of $\X$ by Remark \ref{remSubdivisionRealPart}. 
In particular, we can associate a well-defined homology class 
$[\PW(X,\EEE)] \in H^{BM}_d(\PW \T \Sigma, \Z_2)$ to any polyhedral subspace 
$X$ with real phase structure $\EEE$.

\begin{proof}
To see that $\partial \PW(\X,\EEE) = 0$ we need to verify that every  codimension one face of $\PW(\X,\EEE)$ is contained in an even number of 
facets of $\PW(\X,\EEE)$. 
If $\X$ is of sedentarity $\rho$, we can replace $\T \Sigma$ by 
$\T\Star_\rho \Sigma$,
hence it suffices to prove the statement for sedentarity $0$.

Let $\tau(\varepsilon)$ be a face of $\PW(\X,\EEE)$ of codimension one.
We first consider the case $\sed(\tau) = 0$. 
Then the facets of $\PW(\X,\EEE)$ containing $\tau(\varepsilon)$ are in 
bijection with the 
facets $\sigma$ of $\X$ containing $\tau$ and such that $\varepsilon \in \EEE(\sigma)$.
By Condition 2) of Definition \ref{defGeneralRealPhaseStr}, the number of such facets is even, which proves the claim in this case.

	Now suppose $\tau$ is of sedentarity $\rho$.
	Let $\sigma$ be a facet of $\X$ containing $\tau$. 
	This implies that $\RecCone(\sigma)$ intersects the
	relative interior of $\rho$ and moreover the intersection
	$\rho' := \RecCone(\sigma) \cap \rho$ 
	is a one-dimensional cone, see
	\cite[Proposition 3.2.6]{MikRau},  {\cite[Lemma 3.9]{ossermanRabinoff}}.
	Let $v \in \Z_2^n$ be the generator of $T_{\Z_2}(\rho') \cong \Z_2$.
	Since $v \in T_{\Z_2}(\rho)$, we have $\pi(v) = 0 \in \Z_2(\rho)$. 
	Since $v \in T_{\Z_2}(\sigma)$, by Condition 1) of 
	\Cref{defGeneralRealPhaseStr} the vector space parallel to $\EEE(\sigma)$ contains $v$. 
	Hence the assignment $\sigma(\varepsilon') \mapsto
	\sigma(\varepsilon' +v)$ defines a fixed-point-free involution
	on the set of facets of $\PW(\X,\EEE)$ containing $\tau(\varepsilon)$. 
	This proves that there is an even number of such facets, which proves the claim.
\end{proof}

\begin{remark}
  A tropical variety is commonly defined as a
	weighted $n$-dimensional rational polyhedral complex $\X$ 
	that satisfies the so-called balancing condition. 
	The  balancing condition 
	can be reformulated in terms of tropical homology. 
	From a weighted polyhedral complex $\X$ 
	one can construct its fundamental chain in tropical homology. 
	The balancing condition on $\X$  is then equivalent to 
	this fundamental chain being closed, see \cite[Proposition 4.3]{MZ}.
		
	In analogy to this picture, we propose to 
	think of Conditions 1) and 2) in Definition \ref{defGeneralRealPhaseStr}
	as a real balancing condition in tropical geometry.
	Indeed, given an arbitrary map 
	$\mathcal{E} \colon \Facets(\mathcal{X})  \to \Pow(\Z_2(\rho))$,
	we may still construct $\PW(\X,\EEE)$
	as a (cellular) Borel-Moore chain in $\PW \T \Sigma$. 
	Then Proposition \ref{porRealPartClosed}
	states that this chain is closed if $\EEE$ is a real phase structure.
	Moreover, the inverse is true 
	at least for certain natural special cases, for example
	when $\X \cap \R^n = \Sigma$.
	More precisely, the sedentarity $0$ part of the proof
	of Proposition \ref{porRealPartClosed} can
	be inverted to show that Condition 2) is equivalent to 
	$\PW(\X,\EEE)$ being closed in sedentarity $0$. 
	Moreover, assuming $\X \cap \R^n = \Sigma$, given $\rho \in \Sigma$ 
	one can extend the argument in the proof to show that 
	$\PW(\X,\EEE)$ is closed at higher sedentarity faces of $\overline{\rho}$
	if and only if $\EEE(\sigma)$ is a union of affine subspace satisfying Condition 1). 
	Allowing unions may be interesting when treating tropical varieties with weight functions which are not equal to one. Here we do not treat this generality for the sake of simplicity. 
\end{remark}

\section{{Non-singular tropical subvarieties and their patchworks }} 
\label{sec:nonsingular}

\subsection{Tropically non-singular polyhedral subspaces}
The local models for tropically non-singular polyhedral spaces are matroid fans. 
We recall them for convenience. 
Given a loopfree matroid $M$ on the base set $E$, 
we denote by $\Sigma_M$ the \emph{affine} matroid fan in
$\R^E$ and by $\projFan_M = \Sigma_M/\langle (1,\dots,1) \rangle$ 
the \emph{projective} matroid fan in $\R^E / \langle (1,\dots,1) \rangle $. 
We now describe how to construct both of these fans 
following Ardila and Klivans \cite{ArdilaKlivans}.
Fix the vectors $v_i = -e_i$ where $\{e_1, \dots, e_n\}$ 
is the standard basis of $\R^E$ for 
$E = \{1, \dots, n\}$ and set $v_I = \sum_{i \in I} v_i$ 
for any subset $I \subset E$.
For a chain of flats 
$$\F =\{ \emptyset \subsetneq F_1 \subsetneq F_2 \subsetneq \dots \subsetneq F_k \subsetneq E\}$$
in the lattice of flats $\L$,
define the $k+1$-dimensional cone 
\[\sigma_{\F} = \langle v_{F_1}, \dots , v_{F_k}, \pm v_E \rangle_{\geq 0}.\]
The affine  matroid fan $\Sigma_M$ is the collection of all such cones ranging over the chains in $\L$. 
In particular, the top dimensional faces of  $\Sigma_M$ are in one to one correspondence with 
the maximal chains in the lattice of flats of $M$. 
The projective matroid fan $\projFan_M$ is the image of 
$\Sigma_M$ in the quotient $\R^E / (1, \dots, 1)$. 

If a matroid $M$ has loops $L = \cl(\emptyset)$, then we set $\Sigma_M := \Sigma_{M/L} \subset \R^{E \setminus L}$ and 
$\projFan_M := \projFan_{M/L} \subset \R^{E \setminus L}/(1,\dots,1)$. 
We will often assume $E=\{0,\dots, n\}$. 
Note that by \Cref{exAffineSpace} we can consider $\R^{E \setminus L} = \T\OOO_{\rho_L}$
as a torus orbit of $\T^{n+1}$, and $\R^{E \setminus L}/(1,\dots,1) = \T\OOO_{\rho_L}$
as a torus orbit of $\TP^n$ (excluding the trivial case $E = L$). 
In this sense, in the following we will regard $\Sigma_M$ and $\projFan_M$ as subsets
of $\T^{n+1}$ and $\TP^n$, respectively.

\begin{definition}\cite[Chapter 6]{MikRau} \label{def:nonsingular}
  Let $\Sigma$ be a fan and $\T\Sigma$ the associated tropical toric
	variety. Let
	$X \subset \T\Sigma$ be a polyhedral subspace and consider $p \in X$ 
	of sedentarity $\rho \in \Sigma$ and $\sed(p) = \dim(\rho) = k$. 
	 
	Then $X$ is \emph{non-singular} at $p$ if there exist	
	\begin{itemize}
		\item 
			a toric isomorphism $\psi \colon \T U_\rho \to \T^k \times \R^{n-k} = 
				\T U_{\rho_{\{1,\dots, k\}}} \subset \TP^n$, 
		\item 
			an open neighbourhood $p \in U \subset \T U_\rho$,
			\item 
			a matroid $M$ on $E = \{0,1,\dots, n\}$,
	\end{itemize} 
	such that 
 $\psi(p) = (-\infty, 0)$  and  
	$\psi(X \cap U) = \overline{\projFan}_M \cap \psi(U)$.

	The polyhedral subspace $X$ is a \emph{non-singular tropical subvariety} of $Y$ if it is non-singular at all its points. 
\end{definition}

\subsection{Patchworks of non-singular subvarieties are topological manifolds }

Our next goal is to prove \Cref{thm:PLman} from the introduction. We point the reader to \cite{RRSmat} for the background on the connection between  oriented matroids  and real phase structures. 

\medbreak 
\noindent{\bf Theorem \ref{thm:PLman}.} 
{\it Let $X \subset \T \Sigma$ be a non-singular tropical subvariety 
	of a tropical toric variety $\T \Sigma$. 
	Let $\EEE$ be a real phase structure on $X$. 
	Then the patchwork $\PW X$ is a topological manifold.} 
\medbreak

By  \cite[Theorem 1.1]{RRSmat}, we proved that matroidal fans equipped with real phase structures are equivalent to oriented matroids. 
In \Cref{cor:matroidPLmanifold}, we show that the  patchwork  of a matroid fan with a real phase structure is a topological manifold with determined topology. This is essentially a corollary of the Folkman-Lawrence topological representation theorem for oriented matroids \cite{FolkmanLawrence}. 
We note that similar results on patchworking of matroids were obtained in \cite[Theorem 3.4]{AKW} and in \cite[Theorem A]{celaya2022patchworking}.
We start with two examples  describing the faces of the 
simplicial complexes $\PW\overline{\affFan}_M$ and $\PW\overline{\projFan}_M$.

\begin{example} \label{ex:PosetCovectors}
  Let $\MMM$  be an oriented matroid with underlying matroid $M$ and 
	let $\EEE$ be the associated real phase structure on $\affFan_M$ \cite[Definition 7]{RRSmat}. Hence $\EEE$  is also a real phase structure on  $\overline{\affFan}_M \subset \T^n$.  
	By combining Example  \ref{ex:realTn} with the definition of $\EEE$, 
	we see that the faces of the polyhedral complex 	$\PW \overline{\affFan}_M \subset \PW\T^n$ are labelled by the chains 
	of covectors $0 < Z_1 < \dots < Z_l$ from  $\MMM$.
	The final covector $Z_l$ determines the stratum of $\PW \T^n$ that contains the face. 
	The associated chain of flats $F_i = \phi(Z_{l-i})$ determines the corresponding 
	face of $\overline{\Sigma}_M$. 
	The map $\phi$ here denotes the forgetful map from oriented matroids to 
	matroids sending a covector $X$ to the flat $\mathrm{Supp}(X)^c$, 
	see for example \cite[Section 3.1]{RRSmat}. 
	In particular, the intersection $H_i \cap \PW \overline{\affFan}_M$ 
	is the subcomplex corresponding to all chains such that
	$i \in \phi(Z_l)$. 
\end{example}

\begin{example} \label{ex:LasVergnas}
  Let $\MMM$  be an oriented matroid with underlying matroid $M$ and let 
	$\EEE$ be the associated real phase structure on $\projFan_M\subset\R^{n+1} / (1, \dots, 1)\simeq \R^n$.
  Again, we denote by the same letter the induced real phase structure on 
	$\overline{\projFan}_M \subset \TP^n$.

	Pick $\varepsilon \in \Z_2^{n+1}$ such that $T = (-1)^{\varepsilon}$ is a tope of $\MMM$.
	We denote by $\TP^n(\varepsilon) \subset \PW\T\PP^n$ the copy of $\TP^n$ in $\PW\T\PP^n$
	obtained as the projection of $\T^{n+1}(\varepsilon)$. 
	We are interested in the polyhedral complex 
	$\PW \overline{\projFan} \cap \partial \TP^n(\varepsilon)$. 
	By 	Examples \ref{exAffineSpace} and \ref{ex:PosetCovectors},
	it is a simplicial complex whose faces are labelled by 
	chains of covectors $0 < Z_1 < \dots < Z_l < T$ or, in other words,
	chains of flats $\emptyset \subsetneq F_1 \subsetneq \dots \subsetneq F_l \subsetneq E$
	which are 
	touching $T$. 
	The lattice of flats touching $\varepsilon$ is known as the 
	\emph{Las Vergnas lattice} of $\MMM$ with respect to $T$. 
	We denote it by $\FFF_{lv}^{\MMM}(T)$. 
	Let us also remark that by \cite[Theorem 3.4]{AKW}, 
	the simplicial complex $\PW\overline{\projFan}_M \cap \partial \T\PP^n(0)$
	is equal to the \emph{positive Bergman complex} from \emph{loc.~cit.},
	though rather than intersection with a standard sphere, 
	we intersect with  $\partial \T\PP^n(0)$ which is homeomorphic to a sphere. 
\end{example}

The simplicial complexes mentioned in the previous examples have been studied by Folkman and Lawrence in the context of the 
topological representation theorem for oriented matroids. We translate the results to our setup in the following proposition.
To state the results in the conventional way, it is useful to intersect with a sphere $S^{n-1} \subset \PW\T^n$. 
A natural choice for $S^{n-1}$ is given by taking, in each stratum $\T^n(\varepsilon)$ of $\PW\T^n$, the symmetric copy of the  boundary of the negative orthant, that is $$S(\varepsilon) = \{ x \in \T^n(\varepsilon) \ | \  \forall \ i \ x_i \leq 0 \text{ and } \exists \ i : \ x_i = 0 \}.$$ The union of all of the $S(\varepsilon) $ in $\PW\T^n$ is a polyhedral sphere of dimension $n-1$, see  \Cref{fig:polyhedral sphere}. In fact, it is the boundary of a cube.
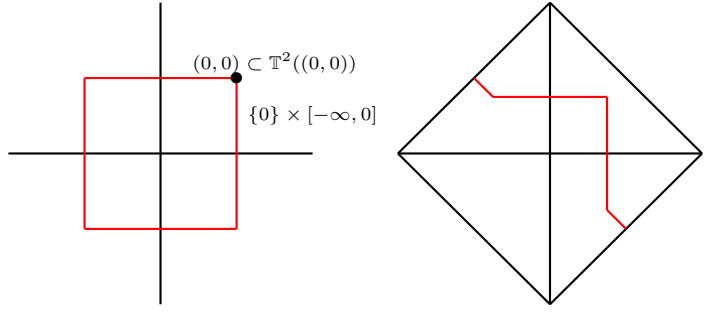
\begin{figure}
\begin{tikzpicture}
    \draw[thick, -] (-2,0) -- (2,0) ;
    \draw[thick, -] (0,-2) -- (0,2) ;
    \draw[thick, red] (-1,-1) -- (1,-1) ;
     \draw[thick, red] (1,-1) -- (1,1) ;
    \draw[thick, red] (1,1) -- (-1,1) ;
    \draw (1,1) node{$\bullet$};
    \draw (1.5,1.2) node{\tiny{$(0,0)\subset \T^2((0,0))$}};
    \draw (2,0.5) node{\tiny{$\lbrace 0\rbrace \times \left[ -\infty,0 \right]$}};
    \draw[thick, red] (-1,1) -- (-1,-1);
\end{tikzpicture}
\begin{tikzpicture}
    \draw[thick, -] (-2,0) -- (2,0) ;
    \draw[thick, -] (0,-2) -- (0,2) ;
    \draw[thick, -] (2,0) -- (0,2) ;
     \draw[thick, -] (2,0) -- (0,-2) ;
        \draw[thick, -] (-2,0) -- (0,-2) ;
           \draw[thick, -] (-2,0) -- (0,2) ;
    \draw[thick, red] (0.75,0.75) -- (0,0.75) ;
      \draw[thick, red] (-0.75,0.75) -- (0,0.75) ;
           \draw[thick, red] (-0.75,0.75) -- (-1,1) ;
       \draw[thick, red] (0.75,-0.75) -- (0.75,0) ;
        \draw[thick, red] (0.75,-0.75) -- (1,-1) ;
     \draw[thick, red] (0.75,0.75) -- (0.75,0) ;
\end{tikzpicture}
\caption{On the left is a polyhedral sphere (drawn in red) in $\PW\T^2$. On the right is a patchwork of $\PW\overline{\projFan}_M$ (drawn in red)  in $\R\PP^{2}$ for the uniform matroid $U_{2,3}$. The real projective plane is obtained from the square by identifying antipodal points on the boundary.} \label{fig:polyhedral sphere}
\end{figure}

\begin{proposition} \label{cor:matroidPLmanifold}
    Let $M$ be a matroid of rank $d$ on $n$ elements and let $\EEE$ be a real phase structure on $\affFan_M$.
		\begin{enumerate}
			\item The complexes $\PW \overline{\affFan}_M \cap S^{n-1}$, $\PW\overline{\projFan}_M$ and $\PW \overline{\affFan}_M$ are topological  manifolds 
			      and homeomorphic to $S^{d-1}$, $\R\PP^{d-1}$ and $\R^d$, respectively. 
			\item For a sign vector   
			$\varepsilon \in \cup_{\sigma \in \Sigma_M} \EEE(\sigma)$,
			  the complexes $\PW\overline{\projFan}_M \cap \partial \T\PP^n(\varepsilon)$ and $\PW\overline{\projFan}_M \cap \T\PP^n(\varepsilon)$
			      are topological  manifolds (with boundary in the second case) and 
						homeomorphic to $S^{d-2}$ and $D^{d-1}$, respectively.
			\item The subcomplexes $H_i \cap \PW \overline{\affFan}_M \cap S^{n-1} \subset \PW \overline{\affFan}_M \cap S^{n-1}$, $i = 1, \dots, n$, 
			      form a pseudosphere arrangement. The associated oriented matroid is equal to $\MMM_\EEE$. 
		\end{enumerate}
\end{proposition}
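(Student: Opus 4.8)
The plan is to deduce all three parts of Proposition~\ref{cor:matroidPLmanifold} from the Folkman--Lawrence topological representation theorem together with the combinatorial descriptions of the faces of $\PW\overline{\affFan}_M$ and $\PW\overline{\projFan}_M$ given in Examples~\ref{ex:PosetCovectors} and~\ref{ex:LasVergnas}. The key translation is that, after intersecting with the polyhedral sphere $S^{n-1} = \bigcup_\varepsilon S(\varepsilon)$ described above, the face poset of $\PW\overline{\affFan}_M \cap S^{n-1}$ is (anti-)isomorphic to the poset of nonzero covectors of the oriented matroid $\MMM = \MMM_\EEE$ associated to the real phase structure via \cite[Theorem 1.1]{RRSmat}, and likewise $\PW\overline{\projFan}_M \cap \partial\TP^n(\varepsilon)$ has face poset the proper part of the Las Vergnas lattice $\FFF_{lv}^{\MMM}(T)$. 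Since the order complex of the covector poset of an oriented matroid of rank $d$ is, by Folkman--Lawrence, a regular CW decomposition of $S^{d-1}$ realized as a pseudosphere arrangement, the homeomorphism types in part (1) will follow once the poset identifications are made precise.

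First I would set up part (1). By \cite[Theorem 1.1]{RRSmat} the pair $(\affFan_M, \EEE)$ corresponds to an oriented matroid $\MMM_\EEE$ of rank $d$ on $n$ elements whose underlying matroid is $M$. Using Example~\ref{ex:PosetCovectors}, the faces of $\PW\overline{\affFan}_M$ correspond to chains of nonzero covectors $0 < Z_1 < \dots < Z_l$ of $\MMM_\EEE$, i.e.\ to simplices of the order complex of the big face lattice minus its bottom element; intersecting with $S^{n-1}$ exactly discards the cone point, so $\PW\overline{\affFan}_M \cap S^{n-1}$ is the order complex of the proper part of the big face lattice, which the topological representation theorem identifies with a regular CW sphere $S^{d-1}$ realized by a pseudosphere arrangement. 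Then $\PW\overline{\affFan}_M$ is the open cone over this sphere (the sedentarity-$0$ stratum together with the cone point at $(-\infty,\dots,-\infty)$ in each $\T^n(\varepsilon)$ glued appropriately), hence homeomorphic to $\R^d$; and $\PW\overline{\projFan}_M$ is the quotient of $\PW\overline{\affFan}_M \cap S^{n-1}$ by the free $\Z_2$-action $\varepsilon \mapsto \varepsilon + (1,\dots,1)$, which corresponds to the involution $Z \mapsto -Z$ on covectors and is fixed-point free on the sphere, yielding $\R\PP^{d-1}$. For part (2), I would restrict the arrangement picture to the closed cell $\TP^n(\varepsilon)$ for $T = (-1)^\varepsilon$ a tope: Example~\ref{ex:LasVergnas} identifies $\PW\overline{\projFan}_M \cap \partial\TP^n(\varepsilon)$ with the order complex of the proper part of $\FFF_{lv}^{\MMM}(T)$, which by the topological representation theorem (the boundary of a closed cell in the pseudosphere arrangement, or equivalently the link-sphere of the tope) is $S^{d-2}$; and $\PW\overline{\projFan}_M \cap \TP^n(\varepsilon)$ is the cone over it, a disk $D^{d-1}$. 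Part (3) is then a restatement: the subcomplexes $H_i \cap \PW\overline{\affFan}_M \cap S^{n-1}$ are, by the last sentence of Example~\ref{ex:PosetCovectors}, exactly the loci of chains with $i$ in the support-complement of the top covector, i.e.\ the pseudospheres $\{X : X_i = 0\}$ of the Folkman--Lawrence arrangement for $\MMM_\EEE$, so they form a pseudosphere arrangement whose associated oriented matroid is $\MMM_\EEE$ by the representation theorem.

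The main obstacle I anticipate is making the poset/CW identifications precise enough that the Folkman--Lawrence theorem genuinely applies: one must check that $\PW\overline{\affFan}_M \cap S^{n-1}$ is not just abstractly the order complex of the covector poset but a \emph{regular} CW complex whose face poset matches, and that the polyhedral sphere $S^{n-1}$ meets each face of $\PW\overline{\affFan}_M$ in the expected way (transversally, cutting out precisely the cone point). I would handle this by verifying directly from Example~\ref{ex:realTn} that in each stratum $\T^n(\varepsilon)$ the trace $S(\varepsilon)$ of the boundary of the negative orthant intersects each polyhedron $\sigma(\varepsilon)$ of $\PW\overline{\affFan}_M$ in a single face, namely $\sigma(\varepsilon)$ with its cone point removed — this is exactly the statement that $\affFan_M$ is a fan and $S^{n-1}$ is the boundary of a cube centered at the origin — and then invoke regularity of the canonical compactification together with \cite[Theorem 3.4]{AKW} (the positive Bergman complex identification) to conclude. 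A secondary technical point is the gluing of cone points across the $2^n$ copies $\T^n(\varepsilon)$: one should observe that the cone point of $\T^n(\varepsilon)$, being the torus fixed point at sedentarity $n$, is identified with that of $\T^n(\varepsilon')$ for all $\varepsilon'$ (since $T_{\Z_2}(\rho_E) = \Z_2^n$), so $\PW\overline{\affFan}_M$ has a \emph{single} cone point over the sphere, confirming the homeomorphism with $\R^d$ rather than a wedge of cones.
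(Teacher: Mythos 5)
Your proposal follows essentially the same route as the paper's proof: via \cite[Theorem 1.1]{RRSmat} and Examples~\ref{ex:PosetCovectors} and~\ref{ex:LasVergnas}, identify $\PW\overline{\affFan}_M\cap S^{n-1}$ with the order complex of the nonzero covectors of $\MMM_\EEE$ and $\PW\overline{\projFan}_M\cap\partial\TP^n(\varepsilon)$ with the order complex of the proper part of the Las Vergnas lattice, and then invoke the Folkman--Lawrence topological representation theorem. The additional details you sketch (the cone structure over the unique sedentarity-$n$ point giving $\R^d$, and the quotient by $\varepsilon\mapsto\varepsilon+(1,\dots,1)$, which under a centrally symmetric Folkman--Lawrence representation is conjugate to the antipodal map and hence yields $\R\PP^{d-1}$) are precisely the standard consequences the paper delegates to \cite[Theorems 4.3.3, 4.3.5, 5.2.1]{bjorner}.
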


\begin{proof}
  Let $\MMM = \MMM_\EEE$ denote the oriented matroid constructed in  \cite[Section 3.3]{RRSmat}.
	By \cite[Theorem 1.1]{RRSmat} and Example \ref{ex:PosetCovectors}, 
	the simplicial complex $\PW \overline{\affFan}_M \cap S^{n-1} \subset \PW\T^n$ 
	is a geometric realization 
	of the order complex of the poset $\CCC$ of covectors of $\MMM$. 
	Analogously, by Theorem \cite[Theorem 1.1]{RRSmat} and Example 
	\ref{ex:LasVergnas}, given $\varepsilon$ such that $T = (-1)^\varepsilon$ is tope of $\MMM$, 
	the simplicial complex $\PW\overline{\projFan}_M \cap \partial \T\PP^n(\varepsilon)$
	is the order complex of the poset $\FFF_{lv}^{\MMM}(T) \setminus \{0,T\}$.  
	The statements then follow from the Folkman-Lawrence topological representation theorem \cite{FolkmanLawrence}. 
	See also
	\cite[Theorems 4.3.3, 4.3.5, 5.2.1]{bjorner}.
\end{proof}

\begin{proof}[Proof of \Cref{thm:PLman}] 
  Fix a point $p \in X$. 
  Let $\psi \colon \T U_\rho \to \T^k \times \R^{n-k}$, $p \in U \subset \T U_\rho$ and $M$
	be the objects guaranteed by the definition of $X$ being non-singular at $p$. 
	Since $(-\infty, 0) \in \overline{\PP\Sigma}_M$, 
	the set $F = \{1, \dots, k\}$ is a flat of $M$.
	After replacing $M$ by $F \oplus M/F$ (and possibly shrinking $U$), 
	we may assume that $v_F$ is	contained in the lineality space of $\PP\Sigma_M$. 
	In particular, any facet of $\PP\Sigma_M$  (in the coarse sense)
	intersects $\psi(U)$. It follows that $\psi$ induces a real phase structure from $X \cap U$ on $\PP\Sigma_M$. 
	
	Given an open subset of a toric variety $U \subset \T \Sigma$, 
	we denote by $\PW U \subset \PW \T \Sigma$ the open subset obtained
	by copying $U$ in each stratum $\T \Sigma(\varepsilon) \subset \PW Y$. 
	We consider the open subset $\PW X \cap \PW U \subset \PW X$. By definition, under $\psi$ (or rather, its extension to $\PW U_\rho \to \PW(\T^k \times \R^{n-k})$)
	it is homeomorphic to $\PW \overline{\PP\Sigma}_M \cap \psi(\PW U)$. 
	Since $\PW \overline{\PP\Sigma}_M$ is a 
	topological manifold by Corollary \ref{cor:matroidPLmanifold}, 
	it follows that $\PW X$ is a topological 
	manifold at all points in $\PW X \cap \PW U$. Since $p$ was arbitrary, these sets cover $\PW X$, 
	hence the claim follows. 
\end{proof}

\section{Patchworking close to the tropical limit}

\label{sec:patchworkinglimit}

\subsection{Non-singular tropical limits} \label{subsection NSTL}

We denote by $\KK = \C((t))$ the field of 
formal Laurent series with complex coefficients,
and by $\KK_\R$ the subfield of series with real coefficients. 
As algebraic closure we fix the field of Puiseux series $\bar{\KK} = \C\{t\}$.
Throughout the following, we fix an algebraic variety $\XX \subset (\KK^*)^n$ defined over 
the subring in $\KK$ of series that converge for $0 < |t| < 1$.
We can alternatively regard $\XX$ as a 
subset of $(\C^*)^n \times \D^*$ describing a family 
of algebraic varieties in $(\C^*)^n$ parametrized by $\D^*$, 
the punctured unit disc in $\C$. 
From now on, we refer to such $\XX$ as an 
\emph{analytic family over $\D^*$}. 
For $t \in \D^*$, we denote the fibre by $\XX_t = \XX \cap ((\C^*)^n \times \{t\})$.
We define the tropical limit $\Trop(\XX) \subset \R^n$ as the closure of $-\val(\XX \otimes \bar{\KK})$, where 
\[
  \val \colon (\bar{\KK}^*)^n \to \R^n
\]
is the coordinate-wise valuation map. 
The tropicalization  $\Trop(\XX)$ is a tropical subvariety 
(that is, a weighted polyhedral subspace that 
satisfies the balancing condition). 
We set $X = \Trop(\XX)$.

For each $p \in X$, we have an associated initial scheme 
$\init_p \XX \subset (\C^*)^n$.
We have $d := \dim_\KK \XX = \dim_\C \init_p \XX$.
For a generic point $p \in X$, $\init_p \XX$ 
is invariant under the action of an $d$-dimensional
subtorus $\Tbold_p$ of $(\C^*)^n$. 
We denote the quotient under this action by 
$\XX_p = \init_p \XX / \Tbold_p$. Hence $\XX_p$ is a $0$-dimensional scheme. 
The \emph{weight} of a generic point $p$ is defined
as 
$\omega(p) = 
\deg(\XX_p) = \dim_\C  K[\XX_p]$.
In particular, the weight of $p$ is $1$ 
if and only if 
$\XX_p$ is a reduced
point if and only if
$\init_p \XX$ is reduced and irreducible. 
Throughout the following, and by slight abuse of notation,
we say that \emph{$\XX$ has non-singular tropical limit}, 
or just \emph{$X = \Trop(\XX)$ is non-singular}, if both of the following conditions hold:
\begin{itemize}
	\item the scheme $\init_p \XX$ is reduced and irreducible for generic $p$ (weight $1$), 
	\item the polyhedral subspace $X$  is non-singular 
	      in the sense of Definition \ref{def:nonsingular}.
\end{itemize}

We first collect a few consequences of $\XX$ having non-singular tropical limit. 
A subscheme $\bL \subset (\C^*)^n$ is called \emph{linear} 
if its closure $\overline{\bL} \subset \PP^n$ 
under  the  standard inclusion  of $(\C^*)^n$ to $\PP^n$ 
is a linear subspace. 
In particular, this implies that $\bL$ is reduced and irreducible.  
Then $\overline{\bL}  \backslash \bL$ is the complement of an 
arrangement of $n+1$ (not necessarily distinct) hyperplanes. 
We denote the associated matroid by $M_\bL$.

\begin{lemma} \cite[Proposition 4.2]{KatzPayne}\label{lemmaline}
  Let $\bL \subset (\C^*)^n$ be a subscheme 
	with non-singular tropical limit $\Trop(\bL) = \PP\Sigma_M$,
	where $M$ is a matroid. 
	Then $\bL$ is linear and $M_\bL = M$. 
\end{lemma}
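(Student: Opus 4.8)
The plan is to reduce the statement to the classical picture where the tropicalization of a linear subspace is a Bergman fan, and to recognize that non-singularity of the tropical limit gives us exactly the data needed to invoke the cited result of Katz--Payne. First I would unwind the hypothesis: by assumption $\bL \subset (\C^*)^n$ has non-singular tropical limit $\Trop(\bL) = \projFan_M$. In particular the generic initial degeneration $\init_p \bL$ is reduced and irreducible of dimension $d = \rank(M) - 1$, so the degree (in the sense of the earlier discussion of $\omega(p)$) is $1$ everywhere. I would then observe that $\Trop(\bL) = \projFan_M$ forces $\dim \bL = d$ and that the recession fan, multiplicities, and combinatorial type of $\Trop(\bL)$ all agree with those of the Bergman fan $\projFan_M$ of the matroid $M$.

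The key step is then to quote \cite[Proposition 4.2]{KatzPayne}, which characterizes subschemes of the torus whose tropicalization (with weights) equals the Bergman fan of a matroid: such a subscheme must be (the intersection with $(\C^*)^n$ of) a linear subspace of $\PP^n$, and the matroid recording the hyperplane arrangement $\overline{\bL} \setminus \bL$ is precisely $M$. Concretely I would check that our normalization of signs and the identification of $\projFan_M$ with the Bergman fan in $\R^n = \R^{E}/\langle(1,\dots,1)\rangle$ used in \Cref{def:nonsingular} match the conventions of Katz--Payne (up to the sign in $-\val$ and the labelling of coordinates by $E = \{0,\dots,n\}$), so that the cited proposition applies verbatim. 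This yields that $\overline{\bL}$ is a linear subspace of $\PP^n$, hence $\bL$ is linear in the sense defined just above the lemma, and $M_\bL = M$.

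The main obstacle I anticipate is purely bookkeeping rather than conceptual: making sure that "non-singular tropical limit" in our sense (reduced irreducible generic initial scheme plus $\projFan_M$ as underlying set) is literally the hypothesis of \cite[Proposition 4.2]{KatzPayne}, which is typically phrased as "$\Trop(\bL)$ equals the Bergman fan of a matroid as a weighted polyhedral complex with all weights one." So I would spend a sentence arguing that weight $1$ at the generic point (which is part of our non-singularity assumption) together with the balancing condition propagates to weight $1$ on every facet, so that the weighted tropicalization is exactly $\projFan_M$ with trivial weights. Once that identification is in place, the lemma follows immediately from the cited result, with no further computation needed.
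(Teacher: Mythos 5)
Your proposal matches the paper exactly: the lemma is stated with the citation \cite[Proposition 4.2]{KatzPayne} and is not reproved in the paper, so invoking that result (after the routine check that ``non-singular tropical limit'' gives the Bergman fan of $M$ with all weights one, matching the hypotheses of Katz--Payne) is precisely what is intended. Your extra sentence on propagating weight one to all facets is harmless bookkeeping and does not change the argument.
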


A polyhedral subdivision $\X$ of $X=\Trop(\XX)$ is called \emph{tropical}
if it satisfies the following two conditions.
\begin{enumerate}
		\item 
		  Fix $\sigma \in \X$. Then for any $p,q$ in the relative
		  interior of $\sigma$ we have $\init_p \XX = \init_q \XX$. 
		  We set $\init_\sigma \XX = \init_p \XX$ for one such $p$. 
		\item 
			The scheme $\init_\sigma \XX$ is invariant under the action 
			of the subtorus $\Tbold_\sigma = T(\sigma) \otimes \C^* \subset (\C^*)^n$. 
			We denote the quotient by
			$\XX_\sigma = \init_\sigma \XX / \Tbold_\sigma$. 
			We have $\dim_\C \XX_\sigma = d - \dim \sigma$.
\end{enumerate}

It is shown for example in \cite[Theorem 2.2.1]{SpeyerThesis} that tropical subdivisions 
exist for any analytic family $\XX$. 
Moreover, we have the following. 

\begin{proposition} \label{prop:SchoenPropertiesI}
  If $\XX$ has  non-singular tropical limit $X = \Trop(\XX)$, then any subdivision $\X$ of $X $
	is tropical. 
\end{proposition}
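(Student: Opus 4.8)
The statement to establish is that if $\XX$ has a non-singular tropical limit $X = \Trop(\XX)$, then \emph{every} polyhedral subdivision $\X$ of $X$ is tropical in the sense of the two conditions above. The plan is to work locally near a point $p$ in the relative interior of a face $\sigma$ of $\X$ and leverage the non-singularity hypothesis, which by Definition \ref{def:nonsingular} gives, after a toric isomorphism, a matroid $M$ and a local identification of $X$ with (the canonical compactification of) a matroid fan $\projFan_M$. The key observation is that the initial scheme $\init_p\XX$ depends only on the local structure of $X$ at $p$ together with the weight-$1$ hypothesis: by Lemma \ref{lemmaline}, a subscheme of a torus whose tropical limit is a non-singular matroidal fan is automatically \emph{linear}, with the matroid reconstructed from the hyperplane arrangement. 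So the first step is to show that for generic $p$ the initial degeneration $\init_p\XX$ is (a torus translate/product of) such a linear scheme, and hence is completely determined by the local matroid $M = M(X,p)$, which in turn is locally constant on the relative interior of each face of any subdivision.

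Concretely, I would proceed as follows. First, fix $\sigma \in \X$ and a point $p$ in its relative interior. Using that $\X$ is a subdivision of the fixed tropical variety $X$, and that $X$ is locally a matroid fan near $p$ by non-singularity, I would identify $\Star_p X$ with a matroid fan $\projFan_M$ (times a lineality space), where $M$ depends only on $p$, not on the chosen subdivision. Then, since initial degeneration is compatible with the matroidal local structure and the weight at generic points is $1$, the scheme $\init_p\XX$ has tropical limit equal to this local matroid fan; by Lemma \ref{lemmaline} (applied after passing to the appropriate torus and quotient), $\init_p\XX$ is a torus-translate of a linear scheme $\bL$ with $M_\bL = M$. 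Since the face $\sigma$ of the subdivision is by definition contained in a single face (cell) of the original tropical variety $X$, and the local matroid $M$ is constant on the relative interior of that cell, we get $\init_p\XX = \init_q\XX$ for all $p,q \in \relint(\sigma)$, which is Condition (1). For Condition (2), the torus $\Tbold_\sigma = T(\sigma)\otimes\C^*$ acts on $\init_\sigma\XX$ because $T(\sigma) \subseteq T(\text{cell of }X)$ and the linear scheme $\bL$ is invariant under the torus corresponding to its lineality space; the dimension count $\dim_\C \XX_\sigma = d - \dim\sigma$ follows from $\dim_\KK\XX = d$ together with the fact that $\init_\sigma\XX$ is pure of dimension $d$ and the quotient removes exactly the $\dim\sigma$-dimensional torus.

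I expect the main obstacle to be the bookkeeping of sedentarity and the passage between the "global" initial scheme $\init_p\XX \subset (\C^*)^n$ and the local matroidal model: one must make sure that the toric isomorphism $\psi$ from Definition \ref{def:nonsingular} is compatible with the way initial degenerations transform, and that quotienting by the subtorus $\Tbold_p$ (as in the definition of weight) interacts correctly with Lemma \ref{lemmaline}, which is stated for subschemes of $(\C^*)^n$ with matroid fan tropicalization "on the nose." A secondary subtlety is verifying that the local matroid of a non-singular tropical variety genuinely depends only on the point $p$ (equivalently, is constant on relative interiors of cells of \emph{any} subdivision) — this is essentially the statement that the matroid $M$ in Definition \ref{def:nonsingular} is intrinsic, which should follow from the fact that $\Star_p X$ is a well-defined fan independent of subdivision. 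Once these identifications are in place, Conditions (1) and (2) are fairly immediate, and the proposition follows since every subdivision of $X$ refines some tropical subdivision, which exists by \cite[Theorem 2.2.1]{SpeyerThesis}.
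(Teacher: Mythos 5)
There is a genuine gap. Your central claim --- that $\init_p\XX$ is ``completely determined by the local matroid $M=M(X,p)$'', so that constancy of the local matroid on $\relint(\sigma)$ forces $\init_p\XX=\init_q\XX$ --- is false: Lemma \ref{lemmaline} only tells you that $\init_p\XX$ is a linear subvariety whose matroid is $M$, and a matroid does not determine its realization (even up to torus translation). A priori, two points of $\relint(\sigma)$ lying in different cells of the Gröbner (tropical) subdivision could give two different linear realizations of the same matroid; ruling this out is exactly the content of Condition (1) for an \emph{arbitrary} subdivision, and your argument assumes it away. The same misconception appears in your closing sentence: it is not true that every subdivision of $X$ refines some tropical subdivision --- an arbitrary polyhedral structure with support $X$ (for instance a coarsest one) need not refine the Gröbner complex, and the existence of a tropical coarsening of a given $\X$ is essentially equivalent to the proposition itself, so that step is circular.

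The paper's proof takes a different route precisely to handle this point: it invokes the known result (Tevelev in the constant-coefficient case, Luxton--Qu in general) that for a schön subvariety \emph{every} subdivision of the tropicalization is tropical, and then reduces schönness, via Hacking and Speyer, to the smoothness of all initial degenerations $\init_p\XX$, $p\in X$. Only this last local verification coincides with your analysis: $\Trop(\init_p\XX)=\Star_pX$ is a matroid fan up to a $\GL(n,\Z)$ change of coordinates, so Lemma \ref{lemmaline} shows $\init_p\XX$ is linear, hence non-singular. If you wanted a self-contained argument instead of citing the schön literature, you would have to prove constancy of $\init$ across Gröbner walls inside $\relint(\sigma)$ directly --- for example, at a wall point $w$ one has $\init_{p'}\XX=\init_{p'}(\init_w\XX)$ for nearby $p'$, and a linear $\init_w\XX$ is unchanged by degenerating in directions contained in the lineality space of its Bergman fan, which contains $\pm(q-p)$ --- but no such step appears in your proposal.
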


\begin{proof}
  This is a result known more generally for so-called schön subvarieties, 
	see \cite[Definition 1.3, Theorem 1.4]{Tev-CompactificationsSubvarietiesTori}
	for the \enquote{constant coefficient} case and 
	\cite[Definition 7.1, Theorem 7.12]{LQ-SomeResultsTropical} for the general case.
	By \cite[Lemma 2.7]{Hacking08} and \cite[Theorem 2.4.2]{SpeyerThesis}, 
	the variety $\XX$ is schön if and only of 
	$\init_p \XX$ is non-singular for all $p \in X$. To finish the proof we show the
	latter property (see also 
	\cite[Section 2, Propositions 2.6 and 2.7]{KS-TropicalGeometryMotivic}
	for a summary of the previous discussion). 
	By \cite[Proposition 2.2.3]{SpeyerThesis}, the tropical limit of $\init_p \XX$ for $p \in X$ 
	is $\Trop(\init_p \XX) = \Star_p X$.
	By our assumptions, $\Star_p X$ is isomorphic to a matroid fan 
	$\PP\Sigma_M$ via a change of coordinates in $\GL(n, \Z)$.
	Hence by \Cref{lemmaline}, $\init_p \XX$ is isomorphic to a linear subvariety via 
	a change of coordinates in $\GL(n, \Z)$. In particular, $\init_p \XX$ is non-singular.
\end{proof}

\begin{remark}
In the situation of Theorem \ref{Patchworking} and Corollary \ref{cor:boundsSemiStable},  the tropical subvarieties appearing are those whose vertices in $\R^N$ have rational coordinates, and have been called  
$\QQ$-rational tropical varieties in \cite{IKMZ}. 
This is because  $\val(\bar{\KK}) = \QQ$. These have also been called rational tropical varieties (we use $\QQ$ to avoid confusion with rational varieties  in algebraic geometry or  with rational polyhedral complexes). We do not know of an example of a tropical subvariety in a tropical toric variety which cannot be deformed to a $\QQ$-tropical variety. 
 \end{remark}

\subsection{Real tropical limits and patchworking}

Let $\XX$ be an analytic family over $\D^*$. From now on,
we additionally assume that $\XX$ is \emph{real}, that is, 
defined over the subfield $\KK_\R = \R((t))$ of real Laurent series.
Equivalently, $\XX \subset (\C^*)^n \times \D^*$ is invariant
under the canonical conjugation on $(\C^*)^n \times \D^*$.
In particular, for any 
$t \in \D^* \cap \R$, $\XX_t \subset (\C^*)^n$ is a real variety 
(invariant under conjugation on $(\C^*)^n$).
Moreover, all initial varieties $\init_p \XX \subset (\C^*)^n$ are real.
We want to use the real structure on the analytic family to define a real phase structure
on $X = \Trop(\XX)$. 

\begin{definition}\label{def:realphaselinear}
  Let $\XX$ be a real analytic family over $\D^*$, 
	$X = \Trop(\XX)$ and $\X$ a tropical subdivision of $X$. 
	We define the map 
	$ \EEE_\XX \colon \Facets(\X)  \to 
	{\Pow}(\Z_2^n)$
	as follows: The set $\EEE_\XX(\sigma)$ 
	is the set of orthants of 
	$(\R^*)^n$ meeting $\R \init_\sigma \XX$,
	\[
		\EEE_\XX(\sigma) :=
		  \left\{ \varepsilon\in\Z_2^n : 
			  (-1)^\varepsilon \cdot \R \init_\sigma \XX \cap \Rpos^n 
			\neq \emptyset \right\}.
	\]
\end{definition}

For general $\XX$, this definition might not produce a real phase structure in the sense of Definition \ref{defGeneralRealPhaseStr} since
$\EEE(\sigma)$ could be empty or a union of several affine subspaces. 
However, our first goal is to show that if $\XX$ has non-singular tropical limit $X$
then $\EEE_\XX$ is a real phase structure on $X$.

Recall that the projective matroid fan $\mathbb{P} \Sigma_M$  of $M$ is a projection of the affine matroid fan $\Sigma_M$ of $M$ along the direction $(1, \dots, 1)$. 
Given an oriented matroid $\MMM$ over $M$, the real phase structure $\EEE_{\MMM}$ on    $\Sigma_M$ from \cite[Definition 7]{RRSmat} 
can also be projected along this direction to produce a real phase structure  on $\mathbb{P} \Sigma_M$. 
Given a real linear subvariety $\bL \subset (\C^*)^n$, we denote by $\MMM_\bL$ the oriented matroid associated to the (cone over the) real hyperplane arrangement
$\R\overline{\bL} \setminus \R \bL$. 
The next lemma compares the real phase structure on $\Trop(\bL) = \PP\Sigma_{M_\bL}$ from  \Cref{def:realphaselinear} with the one
coming from $\MMM_\bL$.

\begin{lemma} \label{lemmalinear}
  Let $\bL \subset (\C^*)^n$ be a real and linear subvariety of the torus.
	Then the real phase structure $\EEE_{\bL}$ from Definition \ref{def:realphaselinear} 
	on $\Trop(\bL) = \PP\Sigma_{M_L}$
	coincides with the projection to $\PP\Sigma_{M_L}$ of the real phase structure $\EEE_{\MMM_\bL}$ from \cite[Definition 7]{RRSmat}.  
\end{lemma}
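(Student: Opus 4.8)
The plan is to reduce the statement to a concrete computation about the sign vectors of a real hyperplane arrangement, using Lemma \ref{lemmaline} to identify $\bL$ with (the complement of a hyperplane arrangement in) a linear subspace. Write $\overline{\bL} \subset \PP^n$ for the projective closure, a linear subspace of projective dimension $d-1 = \rank(M_\bL) - 1$, and let $\H_0, \dots, \H_n$ be the $n+1$ coordinate hyperplanes of $\PP^n$ restricted to $\overline{\bL}$. The oriented matroid $\MMM_\bL$ is by definition the one realized by this arrangement of hyperplanes inside $\RR\overline{\bL} \cong \RP^{d-1}$; its covectors are the sign vectors $(\sign \ell_0(x), \dots, \sign \ell_n(x))$ for $x$ ranging over $\RR\overline{\bL}$, where $\ell_i$ is a linear form cutting out $\H_i$. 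The real phase structure $\EEE_{\MMM_\bL}$ on $\Sigma_{M_\bL}$ from \cite[Definition 7]{RRSmat} assigns to the top cone indexed by a maximal chain of flats the set of topes (full-support covectors) compatible with that chain, and projecting along $(1,\dots,1)$ is exactly the passage from the affine picture in $\RR^{n+1}$ to the projective one in $\RR^{n+1}/(1,\dots,1) = \RR^n$.

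On the other side, $\EEE_{\bL}(\sigma)$ for a facet $\sigma$ of $\PP\Sigma_{M_\bL}$ records which orthants of $(\RR^*)^n$ are met by $\RR \init_\sigma \bL$. Since $\bL$ is linear with constant-coefficient tropicalization, $\init_\sigma \bL = \bL$ for every facet (there is only one maximal cone's worth of initial degeneration up to translation), so $\EEE_\bL(\sigma)$ is essentially the set of sign patterns of points of $\RR\bL \subset (\RR^*)^n$ — equivalently, the full-support covectors (topes) of the arrangement — and the facet index $\sigma$ (a maximal flag of flats) just cuts out which topes are "visible at infinity" along the corresponding chain. So the heart of the matter is: the sign vector of a point $x \in (\RR^*)^n \cap \RR\bL$, read as an element of $\Z_2^n$, coincides with the tope of $\MMM_\bL$ obtained from $x$ viewed as a point of $\RR\overline\bL$, after the projection $\Z_2^{n+1} \to \Z_2^n$ that kills the $0$-th coordinate; and the incidence "this tope belongs to $\EEE(\sigma)$" matches on both sides by the description of covector chains in Example \ref{ex:PosetCovectors}.

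Concretely I would proceed as follows. First, fix an affine chart and write $\RR\bL$ explicitly as $\{x \in (\RR^*)^n : x = \phi(u)\}$ for a linear parametrization $\phi$ coming from a matrix representing $M_\bL$; identify the $n$ coordinate functions $x_1, \dots, x_n$ on the torus with $n$ of the $n+1$ linear forms $\ell_1, \dots, \ell_n$ of the arrangement, the missing one $\ell_0$ being the hyperplane at infinity. Second, observe that $\varepsilon \in \EEE_\bL(\sigma)$ iff the orthant $(-1)^\varepsilon \cdot \Rpos^n$ meets $\RR\bL$, which translates directly into: there is a point of $\RR\overline\bL$ with $\sign \ell_i = (-1)^{\varepsilon_i}$ for $i = 1, \dots, n$ and (by the degeneration encoded in $\sigma$) a prescribed sign or vanishing behavior on the flat-chain directions — this is exactly the condition defining $\EEE_{\MMM_\bL}(\widetilde\sigma)$ for the corresponding cone $\widetilde\sigma$ of $\Sigma_{M_\bL}$ after projection. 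Third, match the indexing: a facet of $\PP\Sigma_{M_\bL}$ corresponds to a maximal chain of flats of $M_\bL$, hence to a maximal covector chain $0 < Z_1 < \dots < Z_{d-1} < T$ of $\MMM_\bL$ (Example \ref{ex:PosetCovectors}), and one checks the tope $T$ at the top ranges precisely over $\EEE_{\MMM_\bL}$ of that cone, which is the content of \cite[Definition 7]{RRSmat}. The main obstacle, and the step to be careful with, is the bookkeeping of the projection along $(1,\dots,1)$ — making sure the affine-vs-projective passage on the tropical side matches the "forget the hyperplane at infinity / quotient $\Z_2^{n+1} \to \Z_2^n$" operation on the oriented-matroid side, and that the choice of which coordinate plays the role of $\ell_0$ does not affect the outcome (it does not, since changing the chart corresponds to reorienting along a coordinate, which both constructions handle compatibly). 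Once the dictionary is set up, the equality $\EEE_\bL(\sigma) = \pi(\EEE_{\MMM_\bL}(\widetilde\sigma))$ for each facet is immediate from the definitions.
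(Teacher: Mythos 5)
There is a genuine error at the heart of your argument, on the tropical/algebraic side. You assert that ``since $\bL$ is linear with constant-coefficient tropicalization, $\init_\sigma \bL = \bL$ for every facet,'' and conclude that $\EEE_{\bL}(\sigma)$ is essentially the set of all sign patterns of $\R\bL$, i.e.\ all topes of the arrangement. This is false: for a facet $\sigma$ of $\PP\Sigma_{M_\bL}$, the initial variety $\init_\sigma \bL$ is a proper degeneration of $\bL$ depending on the maximal flag of flats labelling $\sigma$ (already for the line $x+y+1=0$ in $(\C^*)^2$, the initial variety along the ray corresponding to the flag through the flat $\{0\}$ is $\{x+y=0\}$, which meets only two of the three orthants met by $\R\bL$). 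If your claim were true, $\EEE_\bL(\sigma)$ would be independent of $\sigma$ and would in general not even be an affine subspace of $\Z_2^n$, so the lemma (and Proposition \ref{prop:limitrealstructure}) would be meaningless. Your later remarks that $\sigma$ ``cuts out which topes are visible at infinity'' and imposes ``a prescribed sign or vanishing behavior on the flat-chain directions'' implicitly contradict the earlier claim but are never made precise, so the key computation is missing.

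That missing computation is exactly what the paper's proof supplies: passing to the cone $C\bL$ over $\bL$ (so the arrangement is central), one checks that for a facet $\sigma$ labelled by a complete flag $\F = \{\emptyset = F_0 \subsetneq \dots \subsetneq F_k = E\}$, the initial variety $\init_\sigma C\bL$ is again a central linear subvariety whose oriented matroid is the direct sum of minors $\MMM_\F = \bigoplus_{j=1}^{k} \MMM|F_j/F_{j-1}$ along the flag. Hence $\EEE_{C\bL}(\sigma)$, the set of orthants met by the real points of this degeneration, is precisely the set of topes of $\MMM_\F$; and by the proof of \cite[Lemma~3.1]{RRSmat} the same description holds for $\EEE_{\MMM_\bL}(\sigma)$ as defined in \cite[Definition~7]{RRSmat}, which gives the claimed equality after projecting along $(1,\dots,1)$. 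Your discussion of the oriented-matroid side (topes compatible with the flag, the affine-versus-projective bookkeeping) is roughly in order, and the appeal to Lemma \ref{lemmaline} is unnecessary since $\bL$ is assumed linear; but without identifying the oriented matroid of $\init_\sigma C\bL$ as $\MMM_\F$, the proposed proof does not go through.
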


\begin{proof}
For  $\bL \subset (\C^*)^n$  linear and real, 
we denote by $C\bL$ the cone over $\bL$, that is, the preimage 
of $\bL$ under $(\C^*)^{n+1} \to (\C^*)^n$, $(z_0, \dots, z_n) \mapsto (z_1/z_0, \dots, z_n /z_0)$. 
We denote by $\overline{C \bL}$ the closure of $C\bL$ in $\C^{n+1}$. 
Then $\R\overline{C\bL}  \backslash \R C\bL$ defines a 
central real hyperplane arrangement which 
yields an oriented matroid $\MMM_\bL$.
  We set $M = M_\bL$ and $\MMM = \MMM_\bL$. 
  A facet $\sigma$ of $\Sigma_{M}$ is labelled by a complete flag of flats $\F =  \{\emptyset = F_0 \subsetneq \dots \subsetneq F_k =E\}$ in $M$. 
	It is straightforward to check that the initial variety 
	$\init_\sigma C \bL$ is another (central) linear subvariety whose associated oriented matroid is
	$\MMM_\F := \bigoplus_{j = 1}^k \MMM|F_j/F_{j-1}$. 
	Hence, by definition, $\varepsilon \to (-1)^\varepsilon$ gives a bijection from $\EEE_{C\bL}(\sigma)$ to the 
	tope vectors of $\MMM_\F$.  
	However, it is explained in the proof of \cite[Lemma 3.1]{RRSmat}
	that the same statement is true for $\EEE_{\M}(\sigma)$
	as defined in \cite[Definition 7]{RRSmat}.
\end{proof}

\begin{proposition} \label{prop:limitrealstructure}
  Let $\XX$ be a real analytic family over $\D^*$ with non-singular tropical
	limit $X = \Trop(\XX)$. 
	Then the map $\EEE_\XX$ is a real phase structure on $X$.
\end{proposition}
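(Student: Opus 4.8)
The plan is to verify that $\EEE_\XX$ satisfies the two conditions of Definition \ref{defGeneralRealPhaseStr}, reducing everything to the local linear picture via the non-singularity hypothesis. First I would observe that the two conditions are local on $X$ in the following sense: Condition 1) is a statement about a single facet $\sigma$, and Condition 2) is a statement about a single codimension-one face $\tau$ together with the facets adjacent to it, so it suffices to understand $\EEE_\XX$ in a neighbourhood of each point $p$ lying in the relative interior of a codimension-one face. Fix such a $p$. By Proposition \ref{prop:SchoenPropertiesI}, any subdivision of $X$ is tropical, and by the discussion in Section \ref{subsection NSTL} the initial scheme $\init_p \XX$ has tropicalization $\Star_p X$, which by non-singularity is (a change of coordinates in $\GL(n,\Z)$ applied to) a projective matroid fan $\PP\Sigma_M$. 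By Lemma \ref{lemmaline}, $\init_p \XX$ is then a real linear subvariety $\bL$ of the torus with $M_\bL = M$.

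The key point is that $\EEE_\XX$ restricted to the star of $p$ is, up to the coordinate change, exactly the sign data $\EEE_\bL$ of Definition \ref{def:realphaselinear} applied to the linear variety $\bL$. Indeed, for a facet $\sigma$ of $X$ containing $p$, the initial scheme $\init_\sigma \XX$ is an initial degeneration of $\init_p \XX = \bL$ in the direction of the corresponding cone of $\Star_p X = \PP\Sigma_M$, so $\EEE_\XX(\sigma)$ coincides with the corresponding value of $\EEE_\bL$ on $\PP\Sigma_M$. Now I invoke Lemma \ref{lemmalinear}: $\EEE_\bL$ agrees with the projection to $\PP\Sigma_{M_\bL}$ of the real phase structure $\EEE_{\MMM_\bL}$ coming from the oriented matroid $\MMM_\bL$ of the real hyperplane arrangement. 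Since $\EEE_{\MMM_\bL}$ is a genuine real phase structure on $\Sigma_{M_\bL}$ by \cite[Definition 7, and the results of Section 3]{RRSmat}, and since the properties of being an affine subspace parallel to the cone (Condition 1) and forming an even covering around a codimension-one cone (Condition 2) are preserved under the projection along $(1,\dots,1)$ — as already used in the paragraph preceding Lemma \ref{lemmalinear} — the local data $\EEE_\bL$ on $\PP\Sigma_M$ satisfies both conditions. In particular $\EEE_\XX(\sigma)$ is nonempty and a single affine subspace of the correct dimension, and the even-covering condition holds around $\tau$.

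Transporting these two local conclusions back along the $\GL(n,\Z)$-change of coordinates and the toric identification of the star, and letting $p$ (hence $\sigma$ and $\tau$) vary over all faces, gives that $\EEE_\XX$ is a real phase structure on $X$ in the sense of Definition \ref{defGeneralRealPhaseStr}. The main obstacle I anticipate is the bookkeeping in the previous paragraph: carefully justifying that the initial degeneration $\init_\sigma \XX$ of the family, computed in the original coordinates, matches the cone-wise initial degeneration of the linear variety $\bL = \init_p\XX$ inside $\PP\Sigma_M$ after the coordinate change, i.e.\ that "initial of initial is initial" is compatible with the identification $\Star_p X \cong \PP\Sigma_M$ and with the sign-orthant bookkeeping of Definition \ref{def:realphaselinear}. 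Everything else is a direct appeal to Lemmas \ref{lemmaline} and \ref{lemmalinear} and to the fact, recalled just before Lemma \ref{lemmalinear}, that real phase structures push forward along the projection $\Sigma_M \to \PP\Sigma_M$.
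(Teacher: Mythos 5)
Your proposal is correct and follows essentially the same route as the paper: localize via non-singularity, use Lemma~\ref{lemmaline} to reduce to a real linear subvariety, and then conclude with Lemma~\ref{lemmalinear} together with \cite[Theorem 1.1]{RRSmat}. The only (harmless) difference is that the paper checks Condition~(1) of Definition~\ref{defGeneralRealPhaseStr} more directly --- a facet initial $\init_\sigma\XX$ is reduced, irreducible and $\Tbold_\sigma$-invariant, hence a translated subtorus, so its set of orthants is automatically an affine subspace parallel to $T_{\Z_2}(\sigma)$ --- whereas you route Condition~(1) through the same local matroid picture used for Condition~(2).
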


\begin{proof}
	For a facet $\sigma \in \X$, the associated initial scheme $\init_\sigma \XX$ 
	is reduced and irreducible.
	Hence, by Condition (2) of a tropical subdivision, $\init_\sigma \XX$ is
	a translation of the subtorus $\Tbold_\sigma$. 
	It follows that $\EEE(\sigma)$ 
	is an affine space in $\Z_2^n$ tangent to $T_{\Z_2}(\sigma)$,
	which proves Condition (1) of a real phase structure. 
	To check Condition (2), 
	up to change of coordinates (as in the proof of \Cref{prop:SchoenPropertiesI})
	and using \Cref{lemmaline}, we can assume that $\XX$ is real and linear.
	Condition (2) then follows from \Cref{lemmalinear}
	and \cite[Theorem 1.1]{RRSmat}. 
\end{proof}

Our aim in the remainder of this section is to prove the following theorem from the introduction. 

\medbreak
\noindent {\bf Theorem \ref{Patchworking}} (Patchworking for non-singular tropical limits){\bf .} 
{\it Let $\XX$ be a real analytic family over $\D^*$
	with  non-singular tropical limit $X = \Trop(\XX)$ 
	and associated real phase structure $\EEE = \EEE_\XX$. 
	Let $\X$ be a subdivision of $X$ and 
	$\Sigma$ a pointed unimodular fan such that $\RecCone(\X) \cup \Sigma$ is a fan. 
	
	Then for sufficiently small and positive 
	$t \in \D^* \cap \R$ 
	the pairs $(\R\Sigma, \R \overline{\XX}_t)$ and 
	$(\PW\T\Sigma, \PW(\overline{X}, \EEE))$ are homeomorphic.
	Moreover, the homeomorphism can be chosen to respect
	the stratification of $\R\Sigma$ and $\PW\T\Sigma$ by torus orbits. } 
\medbreak

Here, the closures are taken with respect to the partial compactifications
$(\C^*)^n \subset \C\Sigma$ and $\R^n \subset \T\Sigma$.

\subsection{Reduction to the unimodular case}

The first step in the proof of \Cref{Patchworking} is to construct out of the given data a toric degeneration of the variety $\C \Sigma$ such that the closure of $\XX$ in this family is a semi-stable degeneration. 
To do so, we follow closely \cite{NishinouSiebert} and \cite{Helm-Katz}.

Given  a polyhedron $\sigma \in \R^n$, we denote by $C(\sigma)$
the polyhedral cone in $\R^{n+1}$ obtained as the closure of the cone over $\sigma \times \{1\}$.
Given a polyhedral complex $\X$ in $\R^n$ such that $\Sigma = \RecCone(\X)$ is a fan, 
we denote by $C(\X)$ the fan in $\R^n \times \Rnn$ given by the cones
$C(\sigma)$, $\sigma \in \X$, and $\rho \times \{0\}$, $\rho \in \Sigma$. 

 We say $\sigma$ is \emph{unimodular} if $C(\sigma)$ is unimodular. In particular, the polyhedron $\sigma$ must have $\QQ$-coordinates. 
We call $\sigma$ \emph{strongly unimodular} if there exist $x_0 \in \Z^n$ and $e_1, \dots, e_k, f_1, \dots, f_l \in \Z^n$ a part 
of a lattice basis such that  $\sigma = \kappa + \rho$,  
\begin{align} 
 \kappa = \text{Conv}(x_0, x_0 + e_1, \dots, x_0 +e_k),  \text{ and } & & \rho = \langle f_1, \dots, f_l\rangle_{\geq 0}. 
\end{align}
This is equivalent to the condition that $C(\sigma)$ is unimodular and that the primitive generators of the rays of $C(\sigma)$ 
have last coordinate equal to $x_{n+1} = 0$ or  $x_{n+1} = 1$. In particular, the vertices of $\sigma$ must have $\Z$-coordinates. 
A  polyhedral complex $\PPP$ in $\R^n$ is called strongly unimodular 
if all its polyhedra are strongly unimodular.
For convenience, we also require that $\PPP$ contains a $0$-dimensional cell.
If  $\PPP$ is a strongly unimodular subdivision of $\R^n$, then 
$\Sigma = \RecCone(\PPP)$ is a pointed unimodular complete fan 
of $\R^n$
\cite{BurgosGil}. Our additional requirement that $\PPP$ contains a $0$-dimensional cell is made to ensure that $\Sigma$ is pointed. 
Moreover, $C(\PPP)$ is a pointed unimodular fan with support $\R^n \times \Rnn$.
We will deduce the general Patchworking Theorem \ref{Patchworking}
from the following  version.

\begin{thm}[Unimodular Patchworking] \label{UnimodularPatchworking}
	Let $\XX$ be a real analytic family 
	with  non-singular tropical limit $X = \Trop(\XX)$ 
	and associated real phase structure $\EEE = \EEE(\XX)$. 
	Let $\PPP$ be a strongly unimodular subdivision of $\R^n$
	such that 
	\[
	  \X = \{\sigma \in \PPP : \sigma \subset X\}
	\]
	is a subdivision of $X$.
	Set $\Sigma = \RecCone(\PPP)$.

	Then for sufficiently small and positive $t \in \D^* \cap \R$ the pairs 
	$(\R\Sigma, \R \overline{\XX}_t)$ and 
	$(\PW\T\Sigma, \PW(\overline{X}, \EEE))$ are homeomorphic.
	Moreover, the homeomorphism can be chosen to respect
	the stratification of $\R\Sigma$ and $\PW\T\Sigma$ by torus orbits. 
\end{thm}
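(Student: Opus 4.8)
The plan is to construct a semi-stable degeneration from the strongly unimodular subdivision $\PPP$ and to realize both sides of the claimed homeomorphism as cellular structures on a common model, following the scheme in \Cref{fig:proofCWcomplexes}.

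\textbf{Step 1: The toric degeneration.} Since $\PPP$ is a strongly unimodular subdivision of $\R^n$, the fan $C(\PPP)$ in $\R^n \times \Rnn$ is pointed and unimodular with support $\R^n \times \Rnn$. I would let $\mathfrak{X}$ denote the toric variety $\C(C(\PPP))$; projection to the last coordinate of $\R^n \times \Rnn$ (the semigroup map $\Rnn \to \R$, $\Rnn \hookrightarrow \C$) gives a flat family $\mathfrak{X} \to \C$ whose general fibre is $\C\Sigma$ with $\Sigma = \RecCone(\PPP)$ and whose special fibre over $0$ is the reduced union of toric strata indexed by the bounded cells of $\PPP$. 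One then forms the closure $\overline{\XX}$ of $\XX \subset (\C^*)^n \times \D^*$ inside $\mathfrak{X}$. Here I would invoke \Cref{prop:SchoenPropertiesI}: the non-singular tropical limit hypothesis means $\XX$ is schön, and every subdivision of $X$ — in particular the induced subdivision $\X = \{\sigma \in \PPP : \sigma \subset X\}$ — is a tropical subdivision. By the Nishinou--Siebert / Helm--Katz machinery this guarantees that $\overline{\XX} \to \D$ is a semi-stable degeneration, whose special fibre is a normal crossings union of the (smooth, by schönness) initial varieties $\init_\sigma \XX$ for $\sigma$ a bounded cell of $\X$, glued along the combinatorics of $\X$.

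\textbf{Step 2: The classical side as a CW pair.} Using \cite{Rau-RealSemiStable} applied to the real structure on this semi-stable degeneration, I would produce a description of $(\R\Sigma, \R\overline{\XX}_t)$, for $t>0$ small, as a regular CW pair (this is \Cref{thm:ClassicalSide} in the excerpt). The cells are indexed by pairs consisting of a cell of $\PPP$ (resp.\ of $\X$) together with the orbit/sign data recording in which component of $(\R^*)^n$-orbit the relevant real stratum sits; the real-phase-structure data $\EEE = \EEE_\XX$ from \Cref{def:realphaselinear} encodes exactly this sign information, via \Cref{lemmalinear} on the linear initial varieties. The local manifold structure on the patchwork side needed here is furnished by \Cref{thm:PLman} and the matroid-fan computations of \Cref{cor:matroidPLmanifold}. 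This reduces the theorem to a purely combinatorial comparison of face posets.

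\textbf{Step 3: Matching face posets through intermediate models.} The remaining work is to traverse \Cref{fig:proofCWcomplexes}. I would (a) define the tropical semi-stable degeneration $\T C(\PPP) \to \T$ and its tropical special fibre $(\T\PPP_\infty, \overline{X}_\infty)$ (\Cref{def:tropicalspecialfibre}); (b) prove $(\T\PPP_\infty, \overline{X}_\infty) \cong (\T\Sigma, \overline{X})$ via the bounded cubical subdivision $BC$ (\Cref{prop:TropicalSpecialFibre}) — this is the operation collapsing unbounded directions cell-by-cell; (c) pass to patchworks everywhere, obtaining $(\PW\T\PPP^+_\infty, \PW(\overline{X},\EEE)^+_\infty) \cong (\PW\T\Sigma, \PW(\overline{X},\EEE))$; and (d) identify the tropical positive special fibre $(\PW\T\PPP^+_\infty, \PW(\overline{X},\EEE)^+_\infty)$ with a cubical subdivision ($cub$) of the CW pair on $(\R\Sigma, \R\overline{\XX}_t)$ from Step 2. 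Each of (b), (c), (d) amounts to exhibiting an isomorphism of face posets, after which one concludes by the fact that two regular CW pairs with isomorphic face posets are homeomorphic (respecting, here, the torus-orbit stratification since all the identifications are constructed orbit by orbit). The main obstacle, I expect, is Step 2 — controlling the real points of the semi-stable degeneration near the limit: one must verify that the combinatorial model of $\R\overline{\XX}_t$ really is a regular CW complex (no collapsing, cells are genuine balls) and that its face poset is computed by $\EEE_\XX$, which is where the real-semistable-reduction input of \cite{Rau-RealSemiStable} and the linearity of the initial degenerations (\Cref{lemmaline}, \Cref{lemmalinear}) do the heavy lifting. The poset bookkeeping in Step 3, while lengthy, is essentially forced once the compactification poset \eqref{eq:PosetCompactification} and the patchwork poset are set up correctly.
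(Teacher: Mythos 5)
Your proposal follows essentially the same route as the paper's proof: the toric degeneration $\C(C(\PPP))\to\C$ and the schön/transversality input (\Cref{prop:SchoenPropertiesI}, \Cref{prop:SchoenPropertiesII}), the regular CW description of $(\R\Sigma,\R\overline{\XX}_t)$ via \cite{Rau-RealSemiStable} (\Cref{thm:ClassicalSide}), the tropical special fibre with its bounded cubical subdivision (\Cref{prop:TropicalSpecialFibre}, \Cref{cor:SpecialFibreCWPair}, \Cref{thm:TropicalSide}), and the conclusion by poset rigidity of regular CW pairs (\Cref{prop:CWHomeomorphism}), matching the posets $Q(\X,\EEE)\subset Q(\PPP)$. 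The one caveat is that the local input you credit to \Cref{thm:PLman} and \Cref{cor:matroidPLmanifold} is really the standard-pair-of-cells statements \Cref{thm:StandardPairsClassical} and \Cref{thm:StandardPairsTropical}, whose classical half does not follow directly from those results but needs the separate induction, collar, and permutahedral blow-up argument of Section \ref{sec:local}, which you correctly flag as the main obstacle but do not supply.
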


The reduction from \Cref{Patchworking} to \Cref{UnimodularPatchworking}
is based on the following lemma. 

\begin{lemma} \label{lem:Refinements}
  Let $\Sigma$ be a pointed fan in $\R^n$ and $\X$ a polyhedral complex in $\R^n$ such that $\RecCone(\X) \cup \Sigma$ is a fan.
	\begin{enumerate}
		\item 
		  There exists a complete subdivision $\PPP$ of $\R^n$ 
		  such that $\X \subset \PPP$ and $\Sigma \subset \RecCone(\PPP)$. 
		\item 
		  Assume that $\Sigma$ is unimodular and pick a $\PPP$ 
			as in the previous item. 
			Then there exists an integer $d$ and a refinement 
			$\PPP'$ of $d \PPP$ which is strongly unimodular 
			and such that $\Sigma \subset \RecCone(\PPP')$.
	\end{enumerate}
\end{lemma}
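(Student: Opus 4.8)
\textbf{Plan of proof for \Cref{lem:Refinements}.}

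The plan is to prove the two statements in turn, reducing each to standard facts about refinements of polyhedral/fan complexes. For part (1), the idea is to first fix a common ambient complete complex for both $\X$ and $\Sigma$, and then to push the fan structure to a complete subdivision. Since $\RecCone(\X) \cup \Sigma$ is a fan by hypothesis, write $\Sigma_0 := \RecCone(\X) \cup \Sigma$; this is a pointed fan containing $\Sigma$. First I would complete $\Sigma_0$ to a complete fan $\Sigma_1 \supset \Sigma_0$ in $\R^n$ (every pointed fan can be completed, cf.\ standard toric geometry). Now I want a complete polyhedral subdivision $\PPP$ of $\R^n$ with $\X \subset \PPP$ and $\RecCone(\PPP) = \Sigma_1$ (so in particular $\Sigma \subset \RecCone(\PPP)$). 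The key observation is that the bounded cells of $\X$ together with $\Sigma_1$ give a ``recession-compatible'' configuration: concretely, intersect each cone $\eta \in \Sigma_1$ with the full polyhedral complex generated by $\X$ and the hyperplanes supporting faces of $\X$; one obtains a polyhedral complex whose cells all have recession cone inside a single cone of $\Sigma_1$. This standard construction (a common refinement of $\X$ with the fan $\Sigma_1$ viewed ``at infinity'') produces the desired $\PPP$; the only point to check is that faces of $\X$ survive as faces of $\PPP$, which holds because the supporting hyperplanes of $\X$ are among the cutting hyperplanes and $\RecCone(\sigma) \in \Sigma_0 \subset \Sigma_1$ for each $\sigma \in \X$.

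For part (2), with $\Sigma$ unimodular and $\PPP$ as in part (1), the plan is to apply a unimodular-subdivision (resolution of singularities for toric varieties) argument to the fan $C(\PPP) \subset \R^n \times \Rnn$ while controlling the behaviour at the recession-cone ``slice'' $\R^n \times \{0\}$. Recall $C(\PPP)$ is a fan with support $\R^n \times \Rnn$ whose cones with $x_{n+1} = 0$ are exactly $\RecCone(\PPP) \times \{0\} \supseteq \Sigma \times \{0\}$, which by hypothesis is already unimodular. Now I would invoke the standard fact that any fan admits a unimodular refinement that does not subdivide its already-unimodular cones; apply this to $C(\PPP)$, obtaining a unimodular fan $\mathcal{C}'$ refining $C(\PPP)$ and leaving $\Sigma \times \{0\}$ (indeed all of $\RecCone(\PPP)\times\{0\}$) untouched. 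Then $\mathcal{C}'$ need not be of the form $C(\PPP')$ because its rays with $x_{n+1} \neq 0$ need not be at height $1$. This is fixed by scaling: choose $d$ so that $d \cdot C(\PPP)$'s height-$1$ slice is lattice-compatible, equivalently pass to the refinement $\mathcal{C}'$ after dilating the height coordinate; concretely, replace $\mathcal{C}'$ by its image under $(x, x_{n+1}) \mapsto (x, d x_{n+1})$ and then take the stellar-type refinement that cuts every cone along the hyperplanes $\{x_{n+1} = j\}$, $j \in \Z$. The resulting fan has all primitive ray generators at heights $0$ or $1$ (after choosing $d$ divisible enough, using that $\mathcal{C}'$ is rational so there are finitely many denominators to clear), is still unimodular where it meets $\{x_{n+1}=0\}$ hence equals $C(\PPP')$ for a subdivision $\PPP'$ of $d\PPP$, and by construction each such $C(\sigma)$, $\sigma \in \PPP'$, is unimodular with ray heights in $\{0,1\}$ — which is exactly the definition of $\PPP'$ being strongly unimodular. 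Finally $\RecCone(\PPP') \supset \RecCone(d\PPP) = \RecCone(\PPP) \supset \Sigma$, and since we arranged not to subdivide $\Sigma \times \{0\}$ we indeed get $\Sigma \subset \RecCone(\PPP')$.

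The main obstacle I expect is part (2): making the unimodular refinement of $C(\PPP)$ compatible \emph{simultaneously} with (a) not subdividing the already-unimodular recession slice $\Sigma \times \{0\}$, and (b) the ``height $0$ or $1$'' normalization that characterizes cones of the form $C(\sigma)$. Each of these is standard individually — (a) is the statement that unimodular refinements can be taken relative to a subcomplex that is already unimodular, and (b) is the dilation/slicing trick — but combining them requires care, in particular checking that slicing along $\{x_{n+1} = j\}$ does not destroy unimodularity once the cone is already unimodular and that it does not touch the slice $\{x_{n+1}=0\}$. The cited reference \cite{BurgosGil} for the ``strongly unimodular $\Rightarrow$ pointed unimodular complete fan'' direction suggests this circle of ideas is available in the literature, so the proof should mostly be a matter of assembling these pieces and verifying that the faces of $\X$ (which already sit inside $\PPP$) are not subdivided either, or are allowed to be subdivided compatibly — the statement only requires $\X$ to be subdivided by $\PPP'$ in the sense that $\X$'s support is a union of $\PPP'$-cells, which follows automatically since $\PPP'$ refines $d\PPP$ and $\X \subset \PPP$.
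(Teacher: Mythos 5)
There are genuine gaps in both parts, and in both cases the paper's route (working with the cone $C(\cdot)$ in $\R^{n+1}$ throughout) avoids exactly the points where your construction breaks. In part (1), your key claim that the faces of $\X$ ``survive as faces of $\PPP$'' in the hyperplane-arrangement refinement is false: a hyperplane supporting one face of $\X$ can pass through the relative interior of another face (e.g.\ two disjoint segments, one lying on a hyperplane that cuts the other), so the resulting complex contains $|\X|$ as a union of cells but does \emph{not} contain $\X$ as a subcomplex, which is what the statement $\X\subset\PPP$ requires. Your construction also does not show that every cone of $\Sigma$ actually occurs as $\RecCone$ of some cell, nor that $\RecCone(\PPP)$ is a fan --- both are needed later, since part (2) forms the fan $C(\PPP)$. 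The paper instead observes that the cones $C(\sigma)$, $\sigma\in\X$, together with $\rho\times\{0\}$, $\rho\in\Sigma$, form a pointed fan in $\R^{n+1}$ (this is where the hypothesis that $\RecCone(\X)\cup\Sigma$ is a fan is used), completes this fan using \cite{Ewa-CombinatorialConvexityAlgebraic}, and slices the completion with $\R^n\times\{1\}$; since completing a fan never subdivides its existing cones, $\X$ survives untouched and $\Sigma$ sits in the height-zero part, i.e.\ in $\RecCone(\PPP)$.

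In part (2), your first step (a unimodular refinement $\mathcal{C}'$ of $C(\PPP)$ that does not subdivide the already unimodular cones, in particular $\rho\times\{0\}$, $\rho\in\Sigma$) is exactly the paper's step via \cite[Section 2.6]{Fulton}. The normalization step, however, does not work as described: the dilation $(x,x_{n+1})\mapsto(x,dx_{n+1})$ is not a lattice automorphism, so it destroys unimodularity of the cones of $\mathcal{C}'$ (e.g.\ the unimodular cone spanned by $(1,1),(1,2)$ maps to the cone spanned by $(1,d),(1,2d)$, of index $d$), and ``cutting every cone along $\{x_{n+1}=j\}$, $j\in\Z$'' does not produce a fan at all, so the output cannot be identified with $C(\PPP')$ by this route. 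The correct and much simpler move, following \cite{Helm-Katz}, is to leave $\mathcal{C}'=\Sigma'$ alone and take $d$ to be a common multiple of the nonzero last coordinates of the primitive ray generators of $\Sigma'$; then $\PPP':=\Sigma'\cap(\R^n\times\{d\})$ is strongly unimodular (each cell is a slice of a unimodular cone whose rays hit height $d$ at lattice points), it refines $d\PPP$ because $\Sigma'$ refines $C(\PPP)$, and $\Sigma\subset\RecCone(\PPP')$ because the height-zero subfan was not subdivided. So you identified the two relevant ingredients (relative toric resolution and clearing denominators by a factor $d$), but the way you combine them --- dilating the fan and slicing at all integer heights --- is the step that fails.
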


\begin{proof}
  (1) Since $\RecCone(\X) \subset \Sigma$, the cones $C(\sigma)$, 
	$\sigma \in \X$, together with the cones $\rho \times \{0\}$, $\rho \in \Sigma$
	    form a pointed fan $\widetilde{\Sigma}$ in $\R^{n+1}$. 
			By 
			\cite[Theorem III.2.8]{Ewa-CombinatorialConvexityAlgebraic},
			any fan can be completed to a complete fan. 
			Let $\widetilde{\Sigma}'$ be a completion of $\widetilde{\Sigma}$. 
			Then it is straightforward to check that 
			$\widetilde{\Sigma}' \cap (\R^n \times \{1\})$ is a 
			complete subdivision with the desired properties. 
			
	(2) This is very similar to \cite{Helm-Katz}. 
	We include a short proof streamlined for our purposes. 
	    Let $\PPP$ be a subdivision from the previous item 
			and denote by $C(\PPP)$ the correponding fan in $\R^{n+1}$ 
			with support $\R^n \times \R_{\geq 0}$. 
			By \cite[Section 2.6]{Fulton} there exists a unimodular fan $\Sigma'$ 
			that refines $C(\PPP)$ without subdividing any of the 
			unimodular cones of $C(\PPP)$, in particular, the cones
			$\rho \times \{0\}$, $\rho \in \Sigma$. 
			Moreover, let $d$ be a common multiple of the (non-zero)
			last coordinates of primitive generators of all the rays in $\Sigma'$. 
			Then $\PPP' =\Sigma' \cap (\R^n \times \{d\})$ 
			is a strongly unimodular subdivision that refines $d \PPP$ and 
			such that $\Sigma \subset \RecCone(\PPP')$.
\end{proof}

\begin{proof}[Proof of \Cref{Patchworking} based on 
\Cref{UnimodularPatchworking}]
  Performing a base change $t \to t^d$ and applying the \Cref{lem:Refinements},
	we may assume that there exists a strongly unimodular subdivision 
	$\PPP$ of $\R^n$ such that 
	$\X = \{\sigma \in \PPP : \sigma \subset X\}$ and 
	$\Sigma \subset \widetilde{\Sigma} = \RecCone(\PPP)$.
	By \Cref{UnimodularPatchworking} there exists a homeomorphism
	$\R\widetilde{\Sigma} \approx \PW\T\widetilde{\Sigma}$ 
	which sends $\R \overline{\XX}_t$ to $\PW(\overline{X}, \EEE)$ 
	and respects torus orbits. 
	The latter property ensures that it restricts to homeomorphism
	$\R\Sigma \approx \PW\T\Sigma$ with the analogous property.
\end{proof}

In the remaining subsections, we will prove
\Cref{UnimodularPatchworking}. 
The general strategy will be as follows. 
A strictly unimodular subdivision 
$\PPP$ yields a semi-stable degeneration,
both on the classical and tropical side. 
Due to the nice properties of semi-stable degenerations, 
the topology of $( \R\Sigma, \R \overline{\XX}_t)$ and 
$(\PW\T\Sigma, \PW(\overline{X}, \EEE))$
can be described in terms 
of the classical and tropical special fibre, respectively. 
Then the last step is to relate the 
classical and tropical special fibres, which boils
down to a combinatorial statement about an isomorphism of posets.

\subsection{Regular CW complexes and pairs} \label{SecRegularCWComplexes}

In this subsection, we present a convenient framework for the description of the homeomorphisms
that we are about to construct. 

Let $X$ be a Hausdorff topological space. A subset $\sigma \subset X$
is called a \emph{(regular) $n$-cell}
if there exists an homeomorphism $\alpha \colon D^n \to \sigma$. 
Here, $D^n = \{x \in  \R^n : |x| \leq 1\}$ denotes the closed unit $n$-ball. 
Note that $\sigma$ is compact and hence closed in $X$. 
We define the \emph{relative interior} of $\sigma$ by 
$\sigma^\diamond = \alpha((\D^n)^\circ)$ and 
the \emph{relative boundary} by $d \sigma = \sigma \setminus \sigma^\diamond = \alpha(\partial D^n)$.
These notions are well-defined, since if given a second homeomorphism
$\alpha' \colon D^n \to \sigma$, the composition $\alpha' \circ \alpha^{-1}
\colon D^n \to D^n$ is a homeomorphism and hence preserves $\partial D^n = S^{n-1}$. 
Note also that $n$, called the dimension of $\sigma$, is well-defined
since $D^n$ and $D^m$ are homeomorphic if and only if $n=m$.

\begin{definition} \label{def:RegularComplex}
  Let $X$ be a Hausdorff topological space. 
  A \emph{finite regular CW complex $\XXX$} with support $X$ is a finite collection 
	of subsets of $X$ such that 
	\begin{enumerate}
		\item 
		  each $\sigma \in \XXX$ is a $n_\sigma$-cell of $X$ for some $n_\sigma \in \NN$, 
		\item 
		  the relative interiors $\sigma^\diamond$, $\sigma \in \XXX$, form a disjoint union of $X$,
		\item 
		  for each $\sigma$, the relative boundary $d \sigma$ is a union of cells of smaller dimension.
	\end{enumerate}
\end{definition} 

A few comments: Given $\XXX$ according to the definition, we can choose a homeomorphism
$\alpha_\sigma \colon D^{n_\sigma} \to \sigma \subset X$ for every $\sigma \in \XXX$. 
By \cite[Proposition A.2]{Hatcher} (note that condition (iii) in \emph{loc.~cit.}~is not needed/automatic since
we restrict to the finite case), the $\alpha_\sigma$ yield characteristic maps 
of a CW complex supporting $X$. Since each $\alpha_\sigma$ is a homeomorphism, 
this CW complex is regular according to the standard definition, see \cite[Chapter 3, Definition 1.1]{LundellWeingram}.
Moreover, choosing different $\alpha'_\sigma$, we obtain a strictly equivalent CW complex, see 
\cite[p.~7]{LundellWeingram}. This justifies our simplified definition and the naming. 
We also use $|\XXX| = X = \bigcup_{\sigma \in \XXX} \sigma$ to denote the support of $\XXX$. 
We consider $\XXX$ as a poset with the partial ordering given by inclusion of cells. 
We will sometimes refer to this a the \emph{cell poset} of $\X$ to emphasize that
we forget all geometric information and only consider the poset structure. 
The previous discussion also implies that regular CW complexes in the sense of our definition
admit barycentric subdivisions. For convenience, we express this in the following statement.

\begin{proposition} \label{propBarySubdivisionComplex}
  Let $\XXX$ be a regular CW complex. Then there exists a homeomorphism 
	$\beta \colon |O(\XXX)| \to |\XXX|$ from
	the simplicial complex $O(\XXX)$ (the order complex of the poset $\XXX$)
	to $\XXX$ such that the image of the simplex in $O(\XXX)$ labelled by the chain
	$\sigma_1 \subsetneq \dots \subsetneq \sigma_k$ lies in $\sigma_k^\diamond$. 
	Such a $\beta\colon |O(\XXX)| \to |\XXX|$ is called a \emph{barycentric subdivision} of $\XXX$. 
\end{proposition}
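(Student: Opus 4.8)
The plan is to prove Proposition \ref{propBarySubdivisionComplex} by an induction on the dimension (equivalently, on the number of cells) of the regular CW complex $\XXX$, constructing the homeomorphism $\beta$ skeleton by skeleton. First I would recall that, for a regular CW complex, the closure of each cell $\sigma$ together with the cells contained in its boundary forms a regular CW subcomplex whose support is the closed ball $\sigma$; this is exactly the content of the standard structure theory for regular CW complexes (see \cite[Chapter 3]{LundellWeingram}). The key classical input is that for a regular CW complex, the order complex $O(\XXX)$ of the cell poset is homeomorphic to $|\XXX|$, and in fact this homeomorphism can be realized compatibly with the cell structure; this is exactly \cite[Chapter 3]{LundellWeingram} (the barycentric subdivision of a regular CW complex), so in principle one could simply cite it. However, since the statement we want includes the refinement that the simplex of $O(\XXX)$ labelled by a chain $\sigma_1 \subsetneq \dots \subsetneq \sigma_k$ maps into the open cell $\sigma_k^\diamond$, I would give the short inductive argument to make this precise.

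The induction proceeds as follows. Order the cells $\sigma_1, \dots, \sigma_N$ of $\XXX$ so that cells of lower dimension come first (a linear extension of the cell poset), and let $\XXX_j$ be the subcomplex consisting of $\sigma_1, \dots, \sigma_j$ together with all their faces; note each $\XXX_j$ is again a regular CW complex, and $\XXX_0 = \emptyset$, $\XXX_N = \XXX$. Suppose inductively that we have a homeomorphism $\beta_{j-1} \colon |O(\XXX_{j-1})| \to |\XXX_{j-1}|$ sending the simplex labelled by a chain to the relative interior of the top cell of that chain. To extend over $\sigma_j$: the boundary subcomplex $d\sigma_j$ lies in $\XXX_{j-1}$, so $\beta_{j-1}$ restricts to a homeomorphism $|O(d\sigma_j)| \to d\sigma_j \cong S^{n_j - 1}$, where $n_j = \dim \sigma_j$. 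The order complex $O(\XXX_j)$ is obtained from $O(\XXX_{j-1})$ by coning off $O(d\sigma_j)$ with the new vertex $\{\sigma_j\}$, and $|\XXX_j|$ is obtained from $|\XXX_{j-1}|$ by attaching the ball $\sigma_j$ along $d\sigma_j$. Hence we extend $\beta_{j-1}$ to $\beta_j$ by coning: pick a homeomorphism $\alpha_j \colon D^{n_j} \to \sigma_j$, identify $|O(\XXX_j)|$ near the new vertex with the cone on $|O(d\sigma_j)|$, and define $\beta_j$ on this cone to be $\alpha_j$ composed with the radial extension of $\alpha_j^{-1} \circ \beta_{j-1} \colon |O(d\sigma_j)| \to \partial D^{n_j}$ (using that a homeomorphism of spheres extends radially to a homeomorphism of balls). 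By construction, any simplex of $O(\XXX_j)$ containing the vertex $\{\sigma_j\}$ is the cone on a simplex of $O(d\sigma_j)$, so its image under $\beta_j$ lies in $\sigma_j^\diamond = \alpha_j((D^{n_j})^\circ)$ except for the part on the boundary, and one checks the labelling condition is preserved: a chain $\sigma_{i_1} \subsetneq \dots \subsetneq \sigma_{i_k} = \sigma_j$ maps into $\sigma_j^\diamond$ since the cone point is sent to the center and the boundary simplex was sent into $d\sigma_j$.

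The main obstacle — and the only genuinely nontrivial point — is ensuring that the radial coning construction at each step produces a \emph{homeomorphism} (not just a continuous bijection) and that the pieces glue continuously along $|\XXX_{j-1}|$; this is where one uses that $\XXX$ is finite (so everything is compact Hausdorff and continuous bijections from compact spaces are homeomorphisms) and that the attaching is along a closed subcomplex, so the gluing is along a closed set and continuity is local. I would handle this exactly as in \cite[Proposition A.2]{Hatcher}, which is already invoked earlier in the excerpt to pass from the collection of cells to an honest CW structure: the characteristic maps $\alpha_\sigma$ provide the needed attaching data, and the barycentric subdivision homeomorphism is then assembled from them by the coning procedure above. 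One small bookkeeping remark worth including: since $O(d\sigma_j)$ is a triangulation of the sphere $S^{n_j-1}$ (being the order complex of the face poset of the $(n_j-1)$-sphere $d\sigma_j$ with its regular CW structure), the identification of $O(\XXX_j)$ with the pushout of $O(\XXX_{j-1})$ and the cone $\{\sigma_j\} * O(d\sigma_j)$ along $O(d\sigma_j)$ is purely combinatorial and immediate from the definition of the order complex. With that, the induction closes and $\beta = \beta_N$ is the desired barycentric subdivision.
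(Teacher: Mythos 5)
Your proposal is correct and follows essentially the same route as the paper: both fix characteristic maps, induct (skeleton by skeleton in the paper, cell by cell in yours — an immaterial difference), pull back the already-constructed triangulation of each cell's boundary sphere via $\alpha_\sigma$, and cone over the centre of the disc, with \cite{LundellWeingram} as the classical backstop. Your extra care about the radial extension being a homeomorphism and the gluing being along closed subcomplexes only fills in details the paper's sketch leaves implicit (note the labelling condition, as in the paper, is really a statement about the relative interiors of the simplices).
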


\begin{proof}
  This is essentially \cite[Theorem 1.7]{LundellWeingram}, but for later reference
	we give a sketch. Fix characteristic maps 
	$\alpha_\sigma \colon D^{n_\sigma} \to \sigma$ for every $\sigma \in \XXX$. 
	We proceed by induction on the $k$-skeleton $\XXX^{(k)}$ of $\XXX$. 
	The initial case $k=0$ is trivial. Assume that a barycentric subdivision $\beta^{(k)} \colon
	|O(\XXX^{(k)})| \to |\XXX^{(k})|$ has already been constructed. 
	Given a $k+1$-cell $\sigma$, pulling back via 
	$\alpha_\sigma \colon S^{k} \to \partial\sigma \subset |\XXX^{(k)}|$
	we obtain a triangulation on $S^k$. Coning over $0 \in D^{k+1}$ produces 
	a triangulation of $D^{k+1}$, and hence $\sigma$, labelled canonically by chains
	of the form $\sigma_1 \subsetneq \dots \subsetneq \sigma_l \subset \sigma$.
	Repeating this for every $k+1$-cell $\sigma$, we obtain the desired $\beta_{k+1}$.
\end{proof}

A topological pair $\tau \subset \sigma$ is called a standard pair of cells of codimension $k$
if there exists an homeomorphism $\alpha \colon D^n \to \sigma$ such
that $\alpha^{-1}(\tau) = \{x \in \D^n : x_1 = \dots = x_k = 0\}$.
In particular, $\sigma$ is an $n$-cell, $\tau$ is a $n-k$-cell, and
$d\tau = \tau \cap d\sigma$ as well as $\tau^\diamond = \tau \cap \sigma^\diamond$.

\begin{definition} \label{def:regpair }
  Let $X$ be a Hausdorff topological space. 
	A \emph{finite regular CW pair} of codimension $k$ is a tuple $(\XXX, Y)$ such that
	$\XXX$ is a finite regular CW complex and $Y \subset |\XXX|$ is a subset 
	such that for each $\sigma \in \XXX$ either $\sigma \cap Y = \emptyset$ or
	$\sigma \cap Y \subset \sigma$ is a standard pair of cells of 
	codimension $k$.
\end{definition} 

It follows easily from the definition that in this case 
\[
  \YYY = \{\sigma \cap Y : \sigma \in \XXX, \sigma \cap Y \neq \emptyset\}
\]
is a regular CW complex, $\dim \YYY = \dim \XXX - k$
and $\partial(\sigma \cap Y) = \partial \sigma \cap Y$ as well as 
$(\sigma \cap Y)^\diamond
= \sigma^\diamond \cap Y$. 
Note that $Y = |\YYY|$ is a closed subset of $|\XXX|$. 
It will be convenient to 
use the notation
\[
  \XXX_Y = \{ \sigma \in \XXX : \sigma \cap Y \neq \emptyset\}. 
\]
By definition, we have a canonical bijection between $\XXX_Y$ and $\YYY$.
Since we only consider \emph{finite} regular CW complexes and pairs, we will drop the finite from now on.

The notion of barycentric subdivisions extends to regular CW pairs in the following
sense. 

\begin{proposition} \label{proppropBarySubdivisionPair}
  Let $(\XXX, Y)$ be a regular CW pair with $X = |\XXX|$.  
	Then there exists a barycentric subdivision 
	$\beta \colon |O(\XXX)| \to X$ of $\XXX$
	such that the image of the simplicial subcomplex $O(\XXX_Y) \subset O(\XXX)$
	(given by the subposet $\XXX_Y \subset \XXX$) is equal to $Y$. 
	Moreover, in this case the restricted map $\beta_Y \colon |O(\XXX_Y)| \to Y$
	is a barycentric subdivision of $\YYY$ using the identification
	$O(\XXX_Y) = O(\YYY)$.
\end{proposition}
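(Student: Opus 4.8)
The plan is to bootstrap \Cref{proppropBarySubdivisionPair} from the already-established \Cref{propBarySubdivisionComplex}, being careful to make the inductive construction there respect the subcomplex $Y$. The key observation is that the standard-pair hypothesis in \Cref{def:regpair } means that, after fixing a characteristic map $\alpha_\sigma \colon D^{n_\sigma} \to \sigma$ for each $\sigma \in \XXX_Y$ that carries $\alpha_\sigma^{-1}(\sigma \cap Y)$ to the coordinate subspace $\{x_1 = \dots = x_k = 0\}$, the triangulation of $D^{n_\sigma}$ produced in the proof of \Cref{propBarySubdivisionComplex} (cone over the boundary triangulation from $0$) restricts to a triangulation of that coordinate subspace, which is exactly the corresponding ball $D^{n_\sigma - k}$ for the cell $\sigma \cap Y \in \YYY$. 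So the subdivision of $Y$ induced by the subdivision of $X$ is again a cone-over-boundary construction, i.e.\ a barycentric subdivision of $\YYY$.

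First I would set up notation: since $(\XXX, Y)$ is a regular CW pair, $\YYY = \{\sigma \cap Y : \sigma \in \XXX_Y\}$ is a regular CW complex with $O(\XXX_Y) = O(\YYY)$ as posets, as already noted in the text following \Cref{def:regpair }. Then I would rerun the skeleton-by-skeleton induction in the proof of \Cref{propBarySubdivisionComplex}, but with the following extra care at each stage. When choosing characteristic maps $\alpha_\sigma$, for cells $\sigma \in \XXX_Y$ I choose them compatible with the standard-pair structure, so $\alpha_\sigma^{-1}(\sigma\cap Y)$ is the coordinate subspace $L_\sigma := \{x_1=\dots=x_k=0\}\subset D^{n_\sigma}$; for $\sigma \notin \XXX_Y$ any characteristic map will do. The inductive hypothesis is strengthened to say that $\beta^{(j)}$ restricts to a barycentric subdivision of $\YYY^{(j)} = \YYY \cap \XXX^{(j)}$ whose simplices are labelled by chains in $\XXX_Y$. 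In the inductive step, for a $(j{+}1)$-cell $\sigma$: if $\sigma \notin \XXX_Y$ there is nothing new to check; if $\sigma \in \XXX_Y$, then $\partial\sigma \cap Y = \partial(\sigma\cap Y)$ is a union of lower cells of $\YYY$, the pulled-back triangulation of $S^{j}$ via $\alpha_\sigma$ restricts (by the inductive hypothesis) to a triangulation of $\partial L_\sigma = S^{j-k}$, and coning from $0 \in L_\sigma$ over this gives precisely the coning that defines the barycentric subdivision of the cell $\sigma\cap Y$. The only point to verify is that $0 \in L_\sigma$, which holds since $0$ has all coordinates zero, and that the cone over a simplex in $\partial L_\sigma$ stays inside $L_\sigma$, which holds because $L_\sigma$ is a linear subspace through $0$.

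Having run this modified induction, I would conclude that the resulting $\beta \colon |O(\XXX)| \to X$ is a barycentric subdivision of $\XXX$ (it is, by construction, one of the subdivisions produced by the recipe in \Cref{propBarySubdivisionComplex}), and that $\beta(|O(\XXX_Y)|) = Y$: indeed every simplex of $O(\XXX_Y)$ is labelled by a chain $\sigma_1 \subsetneq \dots \subsetneq \sigma_l$ in $\XXX_Y$, and the construction places its image inside $(\sigma_l \cap Y)^\diamond \subset Y$, while conversely every point of $Y = \bigcup_{\sigma \in \XXX_Y}(\sigma\cap Y)$ lies in the image of some such simplex because the restricted triangulations cover each $\sigma\cap Y$. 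Finally, the restriction $\beta_Y = \beta|_{|O(\XXX_Y)|}$ sends the simplex labelled by $\sigma_1\subsetneq\dots\subsetneq\sigma_l$ into $(\sigma_l\cap Y)^\diamond$, which is exactly the defining property of a barycentric subdivision of $\YYY$ under the identification $O(\XXX_Y) = O(\YYY)$.

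The main obstacle is bookkeeping rather than any deep difficulty: one has to be sure that the choices of characteristic maps can be made simultaneously compatible with the CW structure \emph{and} with the standard-pair structure on all cells meeting $Y$, and that the inductive coning in the proof of \Cref{propBarySubdivisionComplex} is genuinely coordinate-subspace-respecting, i.e.\ that coning from the origin (the image of the barycentre) of a linear slice commutes with restriction to that slice. Once that compatibility is in place, the statement follows by the same induction that proves \Cref{propBarySubdivisionComplex}, merely tracked one subcomplex at a time; no new geometric input is needed.
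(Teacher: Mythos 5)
Your proposal is correct and follows essentially the same route as the paper: choose the characteristic maps so that $\alpha_\sigma^{-1}(\sigma\cap Y)$ is the standard coordinate slice whenever $\sigma$ meets $Y$, note that each skeleton pair is again a regular CW pair, and run the same recursive coning construction, observing that coning from the origin respects the linear slice and that simplices labelled by chains in $\XXX_Y$ land in $(\sigma_l\cap Y)^\diamond$. The extra details you supply (the strengthened inductive hypothesis and the explicit check that the cone over $\partial L_\sigma$ stays in $L_\sigma$) are exactly the bookkeeping the paper leaves implicit.
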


\begin{proof}
  We just have to slightly adapt the argument given for \Cref{propBarySubdivisionComplex}:
	We again choose characteristic maps $\alpha_\sigma \colon D^{n_\sigma} \to \sigma$ for every 
	$\sigma \in \XXX$, but additionally assume that 
	$\alpha_\sigma^{-1}(Y) = \{x \in \D^n : x_1 = \dots = x_k = 0\}$ whenever $\sigma \cap Y 
	\neq \emptyset$. Note that for any $m$ the pair $(\XXX^{(m)}, Y \cap |\XXX^{(m)}|)$
	is a regular CW pair as well. Hence the same recursive coning procedure as in 
	\Cref{propBarySubdivisionComplex} produces the desired homeomorphism. 
	Finally, given a chain $\sigma_1 \subsetneq \dots \subsetneq \sigma_l$ in $\XXX_Y$, 
	the image of the corresponding simplex lies in $\sigma_l^\diamond \cap Y
	= (\sigma_l \cap Y)^\diamond$, which shows that the restricted map
	$\beta_Y \colon |O(\XXX_Y)| \to Y$
	is a barycentric subdivision of $\YYY$.
\end{proof}

The following theorem states that, 
up to homeomorphisms, a regular CW complex $\XXX$ is determined by the poset structure 
on $\XXX$, and similarly, a regular CW pair is determined by the pair of posets 
$\XXX_Y \subset \XXX$. This follows easily from the existence of barycentric subdivisions.

\begin{thm} \label{prop:CWHomeomorphism}
  Let $(\XXX, Y)$ and $(\XXX', Y')$ be two regular CW pairs. 
	Assume that there exists poset isomorphism $\varphi : \XXX \to \XXX'$ that sends
	$\XXX_{Y}$ to $\XXX'_{Y'}$. Then there exists a homeomorphism 
	$\Phi \colon |\XXX| \to |\XXX'|$ that sends a cell $\sigma$ to the cell $\varphi(\sigma)$
	and sends $Y$ to $Y'$. 
\end{thm}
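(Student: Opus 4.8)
The plan is to reduce the statement to the case of simplicial complexes, where it is classical, by passing to barycentric subdivisions on both sides. First I would invoke \Cref{proppropBarySubdivisionPair} to obtain a barycentric subdivision $\beta \colon |O(\XXX)| \to |\XXX|$ such that $\beta$ maps the subcomplex $O(\XXX_Y)$ onto $Y$, and similarly a barycentric subdivision $\beta' \colon |O(\XXX')| \to |\XXX'|$ mapping $O(\XXX'_{Y'})$ onto $Y'$. Both $\beta$ and $\beta'$ are homeomorphisms by construction, and by the moreover-part of \Cref{proppropBarySubdivisionPair} they identify the order complexes of $\YYY$ and $\YYY'$ with $Y$ and $Y'$ respectively.

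Next I would use the poset isomorphism $\varphi \colon \XXX \to \XXX'$. Since $O(-)$, the order complex, is a functor from posets to simplicial complexes, $\varphi$ induces a simplicial isomorphism $O(\varphi) \colon O(\XXX) \to O(\XXX')$; a simplicial isomorphism is in particular a homeomorphism of the geometric realizations $|O(\XXX)| \to |O(\XXX')|$. Because $\varphi$ restricts to a poset isomorphism $\XXX_Y \to \XXX'_{Y'}$, the map $O(\varphi)$ restricts to a simplicial isomorphism $O(\XXX_Y) \to O(\XXX'_{Y'})$, hence carries $|O(\XXX_Y)|$ onto $|O(\XXX'_{Y'})|$. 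Now I would simply compose:
\[
  \Phi := \beta' \circ |O(\varphi)| \circ \beta^{-1} \colon |\XXX| \to |\XXX'|.
\]
This is a composition of homeomorphisms, hence a homeomorphism, and by the previous sentence it sends $Y = \beta(|O(\XXX_Y)|)$ to $\beta'(|O(\XXX'_{Y'})|) = Y'$.

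It remains to check that $\Phi$ sends each cell $\sigma \in \XXX$ onto the cell $\varphi(\sigma) \in \XXX'$. For this I would use the cellular description of barycentric subdivision: by \Cref{propBarySubdivisionComplex}, $\beta$ maps the simplices of $O(\XXX)$ indexed by chains ending at $\sigma$ precisely into $\sigma$, and in fact their union is all of $\sigma$ (the closed star structure of the barycentre of $\sigma$ in $O(\XXX)$); more precisely, $\beta^{-1}(\sigma)$ is the full subcomplex of $O(\XXX)$ on the vertices $\{\tau \in \XXX : \tau \subseteq \sigma\}$. The isomorphism $O(\varphi)$ carries this full subcomplex onto the full subcomplex of $O(\XXX')$ on $\{\tau' : \tau' \subseteq \varphi(\sigma)\}$, which $\beta'$ in turn maps onto $\varphi(\sigma)$. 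Chaining these identities gives $\Phi(\sigma) = \varphi(\sigma)$, and the same argument applied to the restricted subdivisions $\beta_Y, \beta'_{Y'}$ shows $\Phi(\sigma \cap Y) = \varphi(\sigma) \cap Y'$ whenever $\sigma \cap Y \neq \emptyset$.

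I expect the only point requiring genuine care to be the verification that $\beta^{-1}(\sigma)$ is exactly the full subcomplex on the ``closed star'' $\{\tau \subseteq \sigma\}$ rather than something larger — this needs the recursive coning construction in the proof of \Cref{propBarySubdivisionComplex}, where the triangulation of the cell $\sigma$ produced by coning over $0 \in D^{n_\sigma}$ is by design indexed by chains contained in $\{\tau : \tau \subseteq \sigma\}$, so that $\beta$ respects the cell decomposition. Everything else is formal functoriality of the order complex together with the fact that simplicial isomorphisms realize to homeomorphisms.
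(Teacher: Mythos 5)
Your proposal is correct and follows essentially the same route as the paper: both pass to order complexes via the barycentric subdivisions of \Cref{proppropBarySubdivisionPair} and conjugate the simplicial isomorphism $O(\varphi)$ by them. Your extra verification that $\Phi(\sigma)=\varphi(\sigma)$, using that $\beta^{-1}(\sigma)=|O(\{\tau\subseteq\sigma\})|$ by the chain-labelling property of the subdivision, is a valid elaboration of a point the paper leaves implicit.
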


\begin{proof}
  The poset isomorphism $\varphi$ yields an induced simplicial homeomorphism
	$O(\varphi) \colon |O(\XXX)| \to |O(\XXX')|$ that sends the subcomplex
	$O(\XXX_Y)$ to the subcomplex $O(\XXX'_{Y'})$. Pre- and postcomposing 
	$O(\varphi)$ with barycentric subdivisions as provided in 
	\Cref{proppropBarySubdivisionPair} yields the desired homeomorphism $\Phi$. 
\end{proof}

\subsection{The local cases}\label{sec:local}

In this subsection we will deal with the local case of patchworking
which, for non-singular tropical subvarieties means the case of linear spaces:
We fix a real linear subvariety $\bL \subset (\C^*)^n$ of dimension $d$. 
Its locus of real points is $\R \bL \subset (\R^*)^n$.
We denote by $L = \Trop(\bL)$ the support of the associated  matroid fan
and by $\EEE= \EEE_\bL$ the associated real phase structure on $L$.
As usual, $\PW L = \PW(L, \EEE) \subset \PW \R^n \approx (\R^*)^n$ 
denotes the associated patchwork. 
It suffices to consider a single orthant, and by symmetry, we focus
on the positive orthant labelled by $\varepsilon = (0,\dots, 0) \in \Z_2^n$. 
We set $\Rpos = (0,\infty)$ and 
\begin{align*} 
  \Rpos \bL & := \R \bL \cap \Rpos^n && \subset \Rpos^n, \\
	\PWpos L &= \PW L \cap \R^n((0,\dots, 0)) && \subset \R^n.	
\end{align*}
Note that $\Rpos \bL$ is non-empty if and only if $\PWpos L$ is
non-empty by definition of $\EEE$.

Let $\Sigma$ be a complete unimodular fan.
The closure of $\PWpos L$ in $\T \Sigma$ is denoted by
$\PWnn L$. It is equal to 
$\PW \overline{L} \cap \T\Sigma((0,\dots, 0))$.
The closure of the positive orthant in $\R\Sigma$ is denoted 
by $\Rnn \Sigma$ (note that $\Rnn \Sigma$ can also be described
as the toric variety over the semigroup $\Rnn$, so the notation is consistent).
The closure of $\Rpos \bL$  in $\Rnn\Sigma$ 
is denoted by $\Rnn \bL$.
It is equal  to $\R \overline{\bL} \cap \Rnn\Sigma$.
The goal of this subsection is to prove that under suitable conditions
the pairs $\Rnn \bL \subset \Rnn\Sigma$ and 
	$\PWnn L \subset \T\Sigma$ are 
	standard pairs of cells  of dimension $(d,n)$.

Note that $L$ is a union of a collection of cones in $\Sigma$ if and only if
$\Sigma_L = \{\sigma \in \Sigma : \sigma \subset L\}$
is a polyhedral subdivision for $L$. We want to prove
the following two analogous statements, which we state separately for more
convenient referencing. 

\begin{thm} \label{thm:StandardPairsClassical}
  Assume that $\Rpos \bL$ is non-empty. 
	Assume that $L$ is a union of cones in $\Sigma$.
	Then $\Rnn \bL \subset \Rnn\Sigma$ is a 
	standard pair of cells  of dimension $(d,n)$.
\end{thm}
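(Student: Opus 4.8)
The statement asserts that the pair $\Rnn\bL \subset \Rnn\Sigma$ is a standard pair of cells of dimension $(d,n)$, i.e.\ that there is a homeomorphism $D^n \to \Rnn\Sigma$ carrying the model coordinate subspace $\{x_1 = \dots = x_{n-d} = 0\}$ onto $\Rnn\bL$. The plan is to realise both $\Rnn\Sigma$ and $\Rnn\bL$ explicitly as moment-type polytopes and use the affine structure coming from the linearity of $\bL$. First I would recall that $\Rnn\Sigma$, being the nonnegative-real toric variety of a complete unimodular fan $\Sigma$, is homeomorphic to the polytope $P$ dual to $\Sigma$ (equivalently, to $D^n$) via a moment map; more convenient here is the observation that $\Rnn\Sigma = \exp(\T\Sigma)$, where $\T\Sigma = \overline{\R^n}$ is the canonical compactification and $\exp\colon\T\to\Rnn$, $\exp(-\infty)=0$, is the homeomorphism from Remark~\ref{rem:Semigroups}. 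So it suffices to prove the analogous statement for $\PWnn L = \overline{\PWpos L} \subset \T\Sigma$, namely that $\PWnn L \subset \T\Sigma$ is a standard pair of cells of dimension $(d,n)$ — but that is exactly the content of the twin statement (the one that will follow, asserting the result on the tropical side), so I would structure the two theorems to be proved in parallel and deduce one from the other through $\exp$.

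For the genuine content, the key point is that $\Rpos\bL$, when nonempty, is (the positive part of) a linear subvariety, so $\Log = -\val$-style coordinates turn it into something affine. Concretely: since $\bL$ is linear and real with $\Rpos\bL \neq \emptyset$, after a monomial change of coordinates (an element of $\GL(n,\Z)$, which is an automorphism of the torus respecting the real positive structure) the set $\log(\Rpos\bL)\subset\R^n$ is an honest affine-linear subspace $A$ of dimension $d$, cut out by the $n+1$ hyperplanes of the arrangement $M_\bL$. Thus the pair $\log(\Rpos\bL)\subset\R^n$ is a standard pair of affine spaces of dimension $(d,n)$. The task is then to show this standardness survives compactification: that the closure of $A$ in $\T\Sigma$, intersected with $\T\Sigma((0,\dots,0))$, together with $\T\Sigma$ itself, forms a standard pair of cells. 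Here I would invoke the hypothesis that $L = \Trop(\bL)$ is a union of cones of $\Sigma$: this guarantees that the recession directions of $A$ are adapted to $\Sigma$, so that $\overline{A}$ meets each torus orbit $\T\OOO_\rho$ in either the empty set or an affine subspace of the expected dimension, and that the stratification of $\overline{A}$ by sedentarity refines compatibly with that of $\T\Sigma$. Combining with the description of $\overline{L}$ via matroid fans (the canonical-compactification poset description from \eqref{eq:CanonicalComp} and \eqref{eq:PosetCompactification}), one gets that $(\T\Sigma, \PWnn L)$ is a regular CW pair whose cell poset is that of a ball with a standardly-embedded sub-ball; Theorem~\ref{prop:CWHomeomorphism} then upgrades this combinatorial match to the desired homeomorphism of pairs.

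More precisely I would proceed in steps: (1) reduce via $\GL(n,\Z)$ and $\exp$ to showing $\PWnn L \subset \T\Sigma$ is standard of type $(d,n)$; (2) using that $\Sigma$ is complete and unimodular and $L$ is a subfan-support, build a common refinement $\PPP$ of $\Sigma$ compatible with $L$, so that $\T\PPP \to \T\Sigma$ is a homeomorphism and $\overline{L}$ becomes a subcomplex; (3) check that $(\T\Sigma, \PWnn L)$ is a regular CW pair of codimension $n-d$ in the sense of Definition~\ref{def:regpair }, using the local matroid-fan models of Definition~\ref{def:nonsingular} together with Proposition~\ref{cor:matroidPLmanifold}, which says precisely that $\PW\overline{\projFan}_M \cap \T\PP^m(\varepsilon) \cong D^{d-1}$ and similar — this is what makes each cell of $\PWnn L$ a genuine cell and each pair $(\sigma, \sigma\cap\PWnn L)$ standard; (4) compute the cell posets of $\T\Sigma$ (a ball) and of $\PWnn L$ and verify the inclusion is poset-theoretically that of a standardly embedded subball, e.g.\ by exhibiting an explicit $\GL(n,\Z)$ position in which $L$ is the coordinate matroid fan $\projFan_{U_{d,n}}$-type linear space on the first $d$ coordinates and $\Sigma$ is the orthant fan, where the standardness is visible by hand; (5) apply Theorem~\ref{prop:CWHomeomorphism} to conclude. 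The main obstacle I expect is step (3)–(4): controlling what happens at the boundary strata, i.e.\ verifying that the closure $\overline{L}$ inside each $\T\OOO_\rho$ remains linear-of-the-right-dimension and that its real phase structure pattern makes $\PWnn L$ meet each cell of $\T\Sigma$ in a standard sub-cell rather than in a more degenerate configuration — this is where the non-singularity hypothesis on $L$ (hence the matroid-fan local models and Lemma~\ref{lemmaline}) does the real work, and where I would spend most of the proof.
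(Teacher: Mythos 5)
Your opening reduction does not work, and it is the load-bearing step of your plan. The extended logarithm of \Cref{rem:ExtendedLog} gives a homeomorphism $\Log\colon \Rnn\Sigma\to\T\Sigma$ of the \emph{ambient} spaces, but it does not carry $\Rnn\bL$ to $\PWnn L$: the set $\log(\Rpos\bL)$ is not an affine subspace of $\R^n$ (no monomial change of coordinates fixes this --- already for the line $x+y=1$ in $(\C^*)^2$ the log image of the positive part is a strictly convex curve, an ``amoeba-type'' hypersurface only \emph{asymptotic} to the tropical line $L$). So the classical theorem cannot be deduced from \Cref{thm:StandardPairsTropical} by transport through $\exp$; producing a homeomorphism of pairs between $(\Rnn\Sigma,\Rnn\bL)$ and $(\T\Sigma,\PWnn L)$ is precisely the nontrivial content here, not an input. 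Relatedly, in your step (2) a refinement $\PPP$ of $\Sigma$ does not give a homeomorphism $\T\PPP\to\T\Sigma$ (the toric morphism collapses boundary strata), so you cannot freely replace $\Sigma$ that way.

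Your steps (3)--(5) also contain a circularity: to check that the decomposition of $\Rnn\Sigma$ by the orbit closures $\Rnn\Star_\rho\Sigma$ makes $(\Rnn\Sigma,\Rnn\bL)$ a regular CW pair, you must verify that the \emph{top-dimensional} cell meets $\Rnn\bL$ in a standard subcell --- which is exactly the statement of the theorem. The paper's proof shows how to break this circle: it inducts on $d=\dim\bL$, so that the boundary pair $\partial\Rnn\bL\subset\partial\Rnn\Sigma$ becomes a regular CW pair with the same cell poset as the tropical boundary pair, and \Cref{prop:CWHomeomorphism} (your poset idea, which is correct and is used there) gives a standard pair of spheres. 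But then genuinely geometric input is needed to handle the interior: transversality of $\overline{\bL}$ with the toric boundary (\Cref{prop:SchoenPropertiesII}, coming from $L$ being a union of cones of $\Sigma$) yields a collar via \cite[Theorem 6.1]{Rau-RealSemiStable}, which permits replacing $\Sigma$ by the permutahedral fan; one then compares with the projective case, where \Cref{lem:ProjectiveCase} makes standardness visible (a simplex sliced by a linear space), and checks by an explicit piecewise-linear homeomorphism that the pair does not change under the successive blow-ups of coordinate subspaces relating $\Rnn\PP^n$ to the permutahedral ball. None of this interior argument --- collar, permutahedral reduction, blow-up comparison --- appears in your proposal, and without it (or a substitute) the proof is incomplete even after correcting the $\Log$ issue.
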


\begin{thm} \label{thm:StandardPairsTropical}
  Assume that $\PWpos L$ is non-empty. 
	Assume that $L$ is a union of cones in $\Sigma$.
	Then 
	$\PWnn L \subset \T\Sigma$ is a  
	standard pair of cells  of dimension $(d,n)$.
\end{thm}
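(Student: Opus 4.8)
The strategy is to reduce the tropical statement to the classical one, Theorem \ref{thm:StandardPairsClassical}, by showing that the pair $\PWnn L \subset \T\Sigma$ is homeomorphic (as a pair, respecting torus orbits) to the classical pair $\Rnn \bL \subset \Rnn\Sigma$ for a suitable real linear space $\bL$. Indeed, by Lemma \ref{lemmaline} and Lemma \ref{lemmalinear}, once we fix an oriented matroid $\MMM$ realizing the real phase structure $\EEE$ on $L = \PP\Sigma_M$, we may choose $\bL$ to be a real linear subvariety with $\MMM_\bL = \MMM$; with this choice $\EEE_\bL = \EEE$, so $\PW\overline{L}$ is the same patchwork on both sides. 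The heart of the matter is therefore purely topological: compare the closure of $\Rpos \bL$ in the toric variety $\Rnn\Sigma$ with the closure $\PWnn L$ of the positive tropical part in $\T\Sigma$.

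The main tool for this comparison is the homeomorphism $\Log^\PW \colon \R\Sigma \to \PW\T\Sigma$ from Remark \ref{rem:ExtendedLog}, or rather its non-negative/positive avatar $\Log \colon \Rnn\Sigma \to \T\Sigma$, which restricts to a homeomorphism of torus orbits $\Rnn\OOO_\rho \to \T\OOO_\rho$. One would like to say that $\Log$ carries $\Rnn\bL$ to $\PWnn L$, but this is false on the nose: $\Log(\Rpos\bL)$ is an amoeba, not a polyhedron, and the whole point of patchworking is that it is only \emph{homeomorphic} to $\PWnn L$ after rescaling, not equal to it. So the correct approach is to invoke the patchworking homeomorphism in the base (linear) case. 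Concretely, I would use a version of Viro's/Sturmfels' linear patchworking — or, since we are downstream of \Cref{Patchworking}'s machinery only in spirit, a direct argument: the initial degeneration of $\bL$ over $\Rnn$ given by the fan $C(\Sigma_L)$ (cone over $\Sigma_L$) is semistable, its special fiber is a toric variety stratified by the cones of $\Sigma_L$, and the real part of a nearby fiber is, by the results on real semistable degenerations (\cite{Rau-RealSemiStable}), homeomorphic to a regular neighborhood assembled from the strata. One then identifies this assembled space with $\PWnn L$ using the combinatorial description of $\PW\overline{L}$ in terms of chains of covectors (Examples \ref{ex:PosetCovectors}, \ref{ex:LasVergnas}) and the Folkman–Lawrence picture from \Cref{cor:matroidPLmanifold}.

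Granting this homeomorphism of pairs, the conclusion is immediate: Theorem \ref{thm:StandardPairsClassical} tells us $\Rnn\bL \subset \Rnn\Sigma$ is a standard pair of cells of dimension $(d,n)$, i.e.\ there is a homeomorphism $D^n \to \Rnn\Sigma$ carrying $\Rnn\bL$ to a coordinate subdisc $\{x_1 = \dots = x_{n-d} = 0\}$; transporting this homeomorphism through the identification $\Rnn\Sigma \approx \T\Sigma$, $\Rnn\bL \approx \PWnn L$ yields exactly the assertion for the tropical pair. Alternatively — and this is perhaps cleaner, avoiding any appeal to amoebas — one proves Theorem \ref{thm:StandardPairsTropical} directly by the same scheme used for Theorem \ref{thm:StandardPairsClassical}, replacing real initial degenerations by their tropical (piecewise-linear) analogues: stratify $\T\Sigma((0,\dots,0))$ by torus orbits, show $\PWnn L$ meets each orbit $\T\OOO_\rho$ in a polyhedron whose closure behaves correctly, and then build the cell structure by induction on $\dim\rho$ using that $\PW\overline{L} \cap \T\Star_\rho\Sigma$ is again a patchwork of a matroid fan (the contraction $\MMM/\,$ the flat corresponding to $\rho$) and hence a cell by \Cref{cor:matroidPLmanifold}, together with the contractibility of $\PWnn L$ itself. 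The main obstacle, in either route, is the boundary behavior: one must verify that $\PWnn L$ is a cell with the \emph{standard} position relative to the orbit stratification, i.e.\ that as $\rho$ ranges over cones meeting $\RecCone$ of faces of $L$, the intersections $\PWnn L \cap \T\OOO_\rho$ glue to the boundary sphere of a $d$-disc sitting standardly inside the boundary sphere of the ambient $n$-disc — this is precisely where the hypothesis that $L$ is a union of cones of $\Sigma$ (so that $\Sigma_L$ is an honest subdivision and the recession data is compatible) gets used, and it is the step requiring the most care.
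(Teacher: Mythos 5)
There is a genuine gap here, and also a structural problem with your main route. Your primary strategy is to deduce the tropical statement from \Cref{thm:StandardPairsClassical} via a homeomorphism of pairs $(\T\Sigma,\PWnn L)\approx(\Rnn\Sigma,\Rnn\bL)$. First, in this paper that direction is circular: the paper's proof of \Cref{thm:StandardPairsClassical} itself invokes \Cref{thm:StandardPairsTropical} (the boundary pairs are matched combinatorially and the tropical pair is used to conclude they are standard pairs of spheres), so the tropical theorem must be proved independently of the classical one. Second, and independently of circularity, the pair-homeomorphism you assume is essentially the local patchworking statement itself; gesturing at real semi-stable degenerations \cite{Rau-RealSemiStable} and at the Folkman--Lawrence picture does not produce it, because matching the assembled nearby fibre with $\PWnn L$ \emph{as a standard pair relative to the toric stratification} is exactly the content of Sections 4.5--4.7 of the paper, which in turn rest on the very lemma you are trying to prove. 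Your alternative route (induction over torus orbits on the tropical side) has the same unresolved core: knowing that each $\PW\overline{L}\cap\T\Star_\rho\Sigma$ is a cell and that $\PWnn L$ is contractible does not by itself yield that the pair sits in standard position; you flag this boundary-gluing step as ``requiring the most care'' but do not supply it, and it is the actual theorem.

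The paper's proof is much shorter and entirely tropical, and it hinges on two ingredients you did not use. \Cref{lem:ProjectiveCase} shows directly (no hypothesis that $L$ is a union of cones of $\Sigma$) that $\PWnn L\subset\T\PP^n$ is a standard pair of cells, by exhibiting both the classical and tropical projective pairs as simplicial complexes with the same poset, namely chains in the Las Vergnas lattice of \Cref{ex:LasVergnas}, the classical one being visibly a simplex cut by a linear space. Since $0\in\R^n$ lies in the relative interior of this pair, the pair of PL-stars $\pStar(\PWpos L)\subset\pStar(\R^n)$ at the origin is again a standard pair of cells. Finally, \Cref{lem:TropicalClosures} — the key simplification on the tropical side, replacing each infinite cube $[0,\infty]^k$ by a finite cube — says that because $L$ is a union of cones of $\Sigma$, the closure pair $(\T\Sigma,\PWnn L)$ is homeomorphic to this pair of PL-stars, which finishes the proof. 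In particular no comparison with $\Rnn\bL$, no amoeba/rescaling discussion, and no transversality or collar arguments are needed on the tropical side; those genuinely harder tools appear only in the proof of the classical counterpart.
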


We proceed in several steps.

\begin{lemma} \label{lem:ProjectiveCase}
  Assume that $\Rpos \bL$ (and hence $\PWpos L$) is non-empty.
	Then the pairs
	$\Rnn \bL \subset \Rnn\PP^n$
	and $\PWnn L \subset \T\PP^n$
	are standard pairs of cells
	of dimension $(d,n)$.
\end{lemma}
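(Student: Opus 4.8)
The plan is to prove the projective case directly and use it as the base case for \Cref{thm:StandardPairsClassical} and \Cref{thm:StandardPairsTropical}. Since $\bL \subset (\C^*)^n$ is a real linear subvariety of dimension $d$, its closure $\overline{\bL} \subset \PP^n$ is a $d$-dimensional linear subspace. The real locus $\R\overline{\bL}$ is therefore a copy of $\RP^d$ sitting inside $\RP^n$. The claim that $\Rnn \bL \subset \Rnn \PP^n$ is a standard pair of cells of dimension $(d,n)$ amounts to saying that $\Rnn\PP^n$ is a closed $n$-ball (which is classical --- it is the standard $n$-simplex, the image of the moment map), and that $\R\overline{\bL} \cap \Rnn\PP^n$ is a closed $d$-ball meeting the boundary in the expected way, i.e.\ it is cut out inside the simplex by $n-d$ of the ambient coordinate conditions up to a homeomorphism of the pair.

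First I would recall that $\Rnn\PP^n$ is canonically homeomorphic to the standard simplex $\Delta^n = \{x \in \Rnn^{n+1} : \sum x_i = 1\}$ via homogeneous coordinates $[x_0 : \cdots : x_n]$, and that under the logarithm map $\Log^\PW$ (cf.\ \Cref{rem:ExtendedLog}) the same space is identified with $\T\PP^n$; so the two assertions of the lemma are really the same statement transported across a homeomorphism of pairs, and it suffices to treat, say, the tropical side. On the tropical side, by \Cref{lemmaline} the non-singular tropical limit hypothesis is not needed --- $L = \Trop(\bL)$ is the projective Bergman fan $\PP\Sigma_{M_\bL}$ of the matroid $M = M_\bL$, and $\EEE_\bL$ is (by \Cref{lemmalinear}) the real phase structure coming from the oriented matroid $\MMM_\bL$ of the real hyperplane arrangement. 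Then $\PWnn L = \PW\overline{L}\cap\T\PP^n((0,\dots,0))$ is precisely the piece of the patchwork living over the all-positive orthant, and by \Cref{cor:matroidPLmanifold}(2) (applied with $\varepsilon = 0$, which lies in $\cup_\sigma \EEE(\sigma)$ exactly because $\Rpos\bL\neq\emptyset$), the pair $(\PW\overline{\projFan}_M \cap \T\PP^n(0), \PW\overline{\projFan}_M \cap \partial\T\PP^n(0))$ is homeomorphic to $(D^{d-1}, S^{d-2})$; coning this off over the interior point of the simplex (equivalently, passing from the spherical picture on $\partial\T\PP^n(0)$ to the full affine chart $\T\PP^n(0) \cong \T^n$) upgrades it to a $(D^d, D^n)$ pair.

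The slightly delicate point --- and the step I expect to be the main obstacle --- is checking that this pair $(\PWnn L, \Rnn \PP^n) = (D^d, D^n)$ is not merely a pair of balls but a \emph{standard} pair in the sense of \Cref{SecRegularCWComplexes}: one needs a homeomorphism $\alpha\colon D^n \to \Rnn\PP^n$ with $\alpha^{-1}(\PWnn L) = \{x_1 = \cdots = x_{n-d} = 0\}$, and moreover one wants this compatibly with the torus-orbit stratification so that it descends to all faces. Here I would argue stratum by stratum: for each face $\rho$ of the defining fan, the intersection $\PWnn L \cap \T\OOO_\rho$ is, by the description of the patchwork in \Cref{def:realpart} together with \Cref{cor:matroidPLmanifold}(2) applied to the matroid minor / contraction corresponding to $\rho$, again a ball of the expected dimension (namely $d - \dim(\Star$-ified matroid rank drop$)$, empty exactly when the corresponding flat obstruction occurs), and these balls fit together to give a regular CW structure on $\PWnn L$ whose cell poset is the order complex data from \Cref{ex:PosetCovectors}. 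One then invokes \Cref{prop:CWHomeomorphism}: the face poset of $(\Rnn\PP^n$-as-CW-complex$,\PWnn L)$ is isomorphic to the face poset of the model standard pair $(D^n, \{x_1=\cdots=x_{n-d}=0\})$ with its obvious cubical-type CW structure --- both are identified with the poset of nonempty faces of the $n$-simplex together with the sub-poset of faces whose vertex set contains a fixed complementary set --- and hence the two pairs are homeomorphic by a stratified homeomorphism. This reduces the lemma to the purely combinatorial identification of those two posets, which is where the content of Examples \ref{ex:PosetCovectors} and \ref{ex:LasVergnas} and the Folkman--Lawrence theorem gets used, and which I would spell out carefully but expect to go through without surprises.
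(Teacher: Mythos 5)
There is a genuine gap, and it sits exactly at the step you wave through first: the claim that, because $\Log^\PW$ identifies $\Rnn\PP^n$ with $\T\PP^n$, ``the two assertions of the lemma are really the same statement transported across a homeomorphism of pairs.'' The map $\Log^\PW$ is a homeomorphism of the \emph{ambient} spaces only; it does not send $\Rnn \bL$ to $\PWnn L$. The image of $\R\overline{\bL}\cap\Rnn\PP^n$ under the coordinatewise logarithm is a (compactified) positive amoeba-type set, not the patchwork of the Bergman fan, and identifying the topology of these two subsets of the same ambient ball is precisely the content of the lemma --- it cannot be dismissed as a change of notation. Consequently your proposal never actually proves the classical half $\Rnn\bL\subset\Rnn\PP^n$ at all. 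The paper does the opposite of what you propose: it proves the classical half directly and trivially (in homogeneous coordinates $\Rnn\PP^n$ is the standard simplex and $\Rnn\bL$ is its intersection with a linear subspace meeting the interior, hence a standard pair by convexity), and then \emph{transfers} this to the tropical side by exhibiting a common combinatorial model: a barycentric subdivision of the simplex whose barycenters are chosen on $\Rnn\bL$ whenever possible, whose pair of face posets (chains of subsets of $E$, resp.\ chains in the Las Vergnas lattice $\FFF_{lv}^{\MMM_\bL}((0,\dots,0))$) coincides with the simplicial pair $\PWnn L\subset\T\PP^n$ described in \Cref{ex:LasVergnas}, giving a simplicial homeomorphism of pairs.

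Your tropical-side argument also does not by itself deliver the statement. \Cref{cor:matroidPLmanifold}(2) (Folkman--Lawrence) gives that $\PWnn L$ and its boundary piece are a ball and a sphere of the right dimensions, but the lemma asserts more, namely that $(\T\PP^n,\PWnn L)$ is a \emph{standard} pair; a ball embedded in a ball need not be standardly embedded. Your fix via \Cref{prop:CWHomeomorphism} is circular: that proposition requires both sides to already be regular CW \emph{pairs}, i.e.\ it requires the intersection with each cell --- including the top cell --- to be a standard pair of cells, which is what you are trying to prove. At best this sets up an induction over strata, and then the top-dimensional step still needs a separate argument (in the paper this kind of step is done later, in \Cref{thm:StandardPairsClassical}, by transversality, collars and an explicit blow-up homeomorphism, not by poset abstract nonsense). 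Finally, the asserted poset identification with the model pair ``faces whose vertex set contains a fixed complementary set'' is not correct in general: which orbit closures of $\T\PP^n$ meet $\PWnn L$ is governed by the Las Vergnas lattice of flats touching the positive tope, which is not the coordinate-subspace pattern of a model pair. So the combinatorial comparison you defer to the end is both the crux and, as stated, false; the paper's proof supplies the correct comparison by building the matching simplicial structure on the classical side.
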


We emphasize that in this lemma we do not require that 
$L$ is a union of cones of $\Sigma$. 

\begin{proof}
  It is clear that $\Rnn\bL \subset \Rnn\PP^n$
	is a standard pair of cells:
	In projective coordinates, $\Rnn\PP^n$ can be described as the 
	standard  simplex $x_0, \dots, x_n \geq 0$, $x_0 + \cdots + x_n = 1$.
	Since $\Rnn \bL$ is the intersection 
	of this simplex with a linear subspace 
	which intersects its interior, the claim follows. 
		
	To finish, we construct a homeomorphism between
	$\Rnn \bL \subset \Rnn\PP^n$
	and $\PWnn L \subset \T\PP^n$.
	As explained above, we regard
	$\Rnn \bL \subset \Rnn\PP^n$ as
	the intersection of the simplex with a
	linear space. We consider a barycentric subdivision 
	of $\Rnn\PP^n$ where the barycenter of a face is chosen
	in $\Rnn\bL$ whenever the intersection
	with the relative interior of this face is non-empty. 
	We obtain a pair of simplicial 
	complexes with the following characteristics:
	\begin{itemize}
		\item 
			vertices of $\Rnn\PP^n$: non-empty subsets of $E = \{0, \dots, n\}$.
		\item 
			simplices of $\Rnn\PP^n$: chains under inclusion.
		\item 
			vertices of $\Rnn \bL$: 
			flats of $M_\bL$ that touch $\varepsilon = (0, \dots, 0)$, 
		  that is, $\FFF_{lv}^{\MMM_\bL}((0, \dots, 0))$ from 
			\Cref{ex:LasVergnas}.
		\item 
			simplices of $\Rnn \bL$: 
			chains of $\FFF_{lv}^{\MMM_\bL}((0, \dots, 0))$
		  under inclusion.
	\end{itemize}
	As  explained in \Cref{ex:LasVergnas},  $\PWnn L \subset \T\PP^n$
	is a pair of simplicial complexes with exactly the same poset structure, 
	which induces a simplicial homeomorphism between the pairs. 
\end{proof}

We now 
first deal with the tropical side, that is, with 
the pair $\PWnn L \subset \T\Sigma$.

\begin{lemma} \label{lem:TropicalClosures}
  For any complete unimodular fan $\Sigma$, 
	the space $\T \Sigma$ is a ball. 
	Moreover, given a subset $X \subset \R^n$ which is a union of
	cones in $\Sigma$, the pair $\overline X \subset \T \Sigma$ is homeomorphic
	to the pair of PL-stars at $0$ of $X \subset \R^n$. 
\end{lemma}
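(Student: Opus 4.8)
The plan is to prove both statements at once by induction on the dimension $n$, exploiting the fact that a complete unimodular fan $\Sigma$ admits a fan-theoretic description of its toric variety $\T\Sigma$ as glued from copies of $\T^k \times \R^{n-k}$. First I would recall the explicit model: since $\Sigma$ is complete and pointed, the extended tropicalization $\T\Sigma$ is compact, and its boundary $\T\Sigma \setminus \R^n$ decomposes into the closed torus orbits $\T\Star_\rho\Sigma$ for $\rho \in \Sigma \setminus \{0\}$. For the first claim, a clean approach is to use the ``moment map'' / barycentric picture: choose a complete unimodular fan and observe that $\T\Sigma$ is homeomorphic to the cone over its boundary sphere, which in turn is the dual complex of $\Sigma$; concretely, one builds a homeomorphism $\T\Sigma \approx D^n$ by sending a cone $\rho \in \Sigma$ to the corresponding face of the ball and using that the order complex of the face poset of $\Sigma$ (a complete simplicial fan) is a triangulation of $S^{n-1}$. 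This identifies $\T\Sigma$ with the closed ball $D^n$ and its boundary with the sphere $S^{n-1}$.

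For the ``moreover'' part, I would set up the identification so that it is compatible with the cone structure. Given $X \subset \R^n$ a union of cones in $\Sigma$, its PL-star at $0$ is just $X$ itself equipped with the polyhedral structure $\Sigma_X = \{\sigma \in \Sigma : \sigma \subset X\}$, thought of as a cone over the ``link'' $X \cap S^{n-1}$. The closure $\overline X \subset \T\Sigma$ is, by the construction in Equation \eqref{eq:CanonicalComp}, the canonical compactification: for each $\rho \in \Sigma_X$ it adds the closure of $X/T(\rho)$ inside $\T\Star_\rho\Sigma$. I would then check that the face poset of $\overline X$, described in \eqref{eq:PosetCompactification} as $\{(\sigma,\rho)\in \Sigma_X \times \Sigma_X : \rho \subset \sigma\}$ (since recession cones of cones are themselves), matches the face poset of the PL-star of $X$ compactified in the obvious way. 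Since both $\overline X \subset \T\Sigma$ and $(\text{PL-star of }X) \subset D^n$ are regular CW pairs (using that $\T\Sigma$, $\T\Star_\rho\Sigma$, and matroid-like links are balls/spheres — here just products of tropical affine spaces with tori), Theorem \ref{prop:CWHomeomorphism} applies: an isomorphism of the face posets, respecting the sub-poset corresponding to $X$, yields the desired homeomorphism of pairs.

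The main obstacle I anticipate is making the first statement (that $\T\Sigma$ is a ball) rigorous and, more importantly, getting the homeomorphism to be sufficiently canonical/natural that it simultaneously carries $\overline X$ to the PL-star of $X$ for \emph{every} subcomplex $X$ at once, rather than for one $X$ at a time. The cleanest route is probably to invoke the regular CW pair machinery of \Cref{SecRegularCWComplexes} directly: show that $(\T\Sigma, \overline X)$ is a regular CW pair with face poset \eqref{eq:PosetCompactification}, show that the one-point compactification (or cone-completion) of the PL-star $(|\Sigma|, X) = (\R^n, X)$ is a regular CW pair with the isomorphic face poset, and conclude by \Cref{prop:CWHomeomorphism}. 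The delicate points are verifying the ``standard cell'' and ``regular cell'' conditions at the deepest strata of $\T\Sigma$ — i.e., that $\overline\sigma \cap \T\Star_\rho\Sigma$ is genuinely a closed ball of the right dimension and that $\overline X$ meets each cell of $\T\Sigma$ in a standard sub-pair — which ultimately reduces to the fact that $\T^k$ is a ball (a $k$-cube, cf.\ the discussion of the polyhedral sphere after Example \ref{ex:realTn}) and that a linear slice of a simplex/cube through its interior is again a ball of the expected dimension.
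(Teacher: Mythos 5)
Your plan is workable, but it takes a genuinely different and heavier route than the paper, whose proof is a short explicit construction: unimodularity identifies each cone $\sigma\in\Sigma$ with $[0,\infty)^{\dim\sigma}$ via its primitive generators, so the closure $\overline{\sigma}\subset\T\Sigma$ is the infinite cube $[0,\infty]^{\dim\sigma}$; fixing once and for all a homeomorphism $[0,\infty]\to[0,1]$ sending $0$ to $0$ gives homeomorphisms $\overline{\sigma}\to C_\sigma$ (the unit parallelepiped spanned by the primitive generators), compatible on faces, which glue to a single homeomorphism $\T\Sigma\to\pStar(\R^n)$ carrying $\overline{X}$ to $\pStar(X)$ for \emph{every} union of cones $X$ at once; the ball statement then follows since $\pStar(\R^n)$ is a compact star-shaped polyhedron. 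This buys exactly the naturality in $X$ you were worried about, with no poset machinery and no induction on strata. Your route — exhibit $(\T\Sigma,\overline{X})$ and a compactified PL-star pair as regular CW complexes with isomorphic cell posets (both given by pairs $\rho\subset\sigma$ as in \eqref{eq:PosetCompactification}) and conclude by poset rigidity — is viable and is in fact the strategy the paper uses in later steps (cf.\ \Cref{lem:FaceIsBall}, \Cref{lem:PosetSpecial}, \Cref{prop:TropicalSpecialFibre}), but two points need repair. First, \Cref{prop:CWHomeomorphism} and \Cref{def:regpair } concern pairs in which $Y$ meets every cell in a standard cell of \emph{fixed codimension}; a subcomplex pair such as $\overline{X}\subset\T\Sigma$ is not of this form (a top-dimensional cone not contained in $X$ may meet $\overline{X}$ in a union of proper faces), so you must instead state and use the easy subcomplex analogue — a poset isomorphism of regular CW complexes matching distinguished subcomplexes yields a homeomorphism of pairs — which follows from the barycentric subdivisions of \Cref{propBarySubdivisionComplex} exactly as in \Cref{proppropBarySubdivisionPair}. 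Second, your argument for the first claim (``$\T\Sigma$ is the cone over its boundary, which is the dual complex of $\Sigma$'') is under-justified as stated: proving that $\T\Sigma$ is such a cone, and that the boundary strata are regular cells, essentially forces the same cube description $\overline{\sigma}\cong[0,\infty]^{\dim\sigma}$ that powers the paper's direct argument, at which point the explicit gluing is both shorter and gives the pair statement for free.
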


We use PL-stars only for simplicity, since they are well-defined up to PL-homeomorphism. 
We write $\pStar(X) = \pStar_0(X)$ for the PL-star at the origin $0  \in \R^n$.

\begin{proof}
  Given a cone $\sigma \in \Sigma$ of dimension $k$, choosing primitive generators $v_1, \dots, v_k$
	we obtain an identification with $[0,\infty)^k$.
	The closure $\overline{\sigma}$ in $\T \Sigma$ is then canonically homeomorphic
	to an infinite cube $[0,\infty]^k$.
	More generally, suppose  $X$ is the support of a subset of cones of $\Sigma$ and denote this subset by $\Sigma(X) \subset \Sigma$. Then
	the closure $\overline{X}$ in $\T \Sigma$ (or, equivalently, in $\T \Sigma(X)$)
	is obtained by gluing these infinite cubes $\overline{\sigma}$, $\sigma \in \Sigma(X)$,
	as a direct limit governed by the poset $\Sigma(X)$.
	
	Fix a homeomorphism $[0,\infty] \to [0,1]$ that sends $0$ to $0$.
	This induces an homeomorphism between $\overline{\sigma}$ and the (finite) cube $C_\sigma$
	in $\sigma$ whose vertices are the points $\sum_{i \in I} v_i$, $I \subset \{1, \dots, k\}$. 
	Moreover, these homeomorphisms glue to a homeomorphism between the direct limit $\overline{X}$
	and the set 
	\[
	  \pStar(X) = \bigcup_{\sigma \in \Sigma(X)} C_\sigma.
	\]
	As the notation suggests, the set $\pStar(X)$ is a PL-star for $X$ in $0$. 
	Finally, applying the same reasoning to $\R^n$ and $\Sigma(\R^n) = \Sigma$, 
	we obtain a homeomorphism of pairs between $\overline{X} \subset \overline{\R^n} = \T \Sigma$
	and $\pStar(X) \subset \pStar(\R^n)$. Of course, $\pStar(\R^n)$ is a ball.
\end{proof}

We can now finish the proof of \Cref{thm:StandardPairsTropical}.

\begin{proof}[Proof of \Cref{thm:StandardPairsTropical}]
  By \Cref{lem:ProjectiveCase}, we now that 
	$\PWnn {L} \subset \T\PP^n$ is 
	a standard pair of cells. 
	Since $0 \in \R^n$ corresponds to a point in the relative interior
	of this pair, it follows that 
	$\pStar(\PWpos L) \subset \pStar(\R^n)$ 
	is a standard pair of cells.
	By \Cref{lem:TropicalClosures}, this implies that 
	$\PWnn L \subset \T\Sigma$ is a standard pair of cells.
\end{proof}

We now turn our attention to the pair 
$\Rnn \bL \subset \Rnn\Sigma$. 

\begin{proof}[Proof of \Cref{thm:StandardPairsClassical}]
  As we mentioned in \Cref{rem:ExtendedLog}, 
	$\Rnn \Sigma$ is 
	homeomorphic to $\T \Sigma$ via the extended logarithm map $\Log$. 
	Hence by \Cref{lem:TropicalClosures},
	$\Rnn \Sigma$ is a ball.
	We now proceed by induction on $\dim(\bL)  = d$. 
	For $d = 0$, the statement is trivial. 
	We now prove the induction step $d-1 \to d$. 
	
	By our first remark, the subsets $\Rnn \Star_\rho \Sigma$, $\rho \in \Sigma$, 
	form a regular CW complex (with support $\Rnn \Sigma$). 
	By the induction assumption, we know that 
	for $\rho \subset L$ of positive dimension, the
	pair 
	$\Rnn \bL \cap \Rnn\Star_\rho \Sigma \subset \Rnn \Star_\rho \Sigma$
	is a standard pair of cells, while for $\rho \not\subset L$ the intersection 
	$\Rnn\bL \cap \Rnn \Star_\sigma \Sigma$ is empty. 
	It follows that  $\partial \Rnn \bL \subset \partial \Rnn \Sigma$ 
	is a regular CW pair.
	
	The analogous argument shows the corresponding tropical pair
	$\partial \PWnn L \subset \partial \T \Sigma$
	is a regular CW pair with the same poset structure
	(since $\Rnn\bL \cap \Rnn\Star_\rho \Sigma\neq \emptyset$
	if and only if $0 \in \EEE(\rho)$ if and only if
	$\PWnn L \cap \T\Star_\rho \Sigma \neq \emptyset$).
	Hence by \Cref{prop:CWHomeomorphism} the two pairs are homeomorphic. 
	It follows that both pairs are standard pairs of spheres, 
	since by \Cref{thm:StandardPairsTropical} 
	$\PWnn L \subset \T \Sigma$
	is a standard pair of cells. 
	
	Since $L$ is a union of cones in $\Sigma$, $\overline{\bL}$ is transversal 
	with respect to the toric boundary of $\C\Sigma$ in the following sense:
	For any $p \in \overline{\bL}$, the torus orbit containing $p$
	and $\overline{\bL}$ intersect transversally in $p$. 
	This follows from  \Cref{prop:SchoenPropertiesII}.
	Transversality with the toric boundary implies that there 
	exists a neighbourhood $\partial \Rnn\Sigma \subset U \subset \Rnn \Sigma$
	which is homeomorphic to $\partial \Rnn \Sigma \times [0,1]$ 
	and such that this homeomorphism identifies
	$\Rnn\bL \cap U$ with $\partial \Rnn\bL \times [0,1]$,
	see \cite[Theorem 6.1]{Rau-RealSemiStable}.
	If follows that we may replace 
	$\Sigma$ by any complete unimodular fan supporting $L$,
	since by the previous argument 
	$\partial \Rnn \bL \subset \partial \Rnn \Sigma$ is a standard pair of spheres
	for any such choice of $\Sigma$. 
	Hence we assume from now on that $\Sigma = \Pi^n$ is the permutahedral fan in $\R^n$. 
	
	The permutahedral variety $\R \Pi^n \to \RP^n$ is obtained from $\RP^n$ 
	from consecutively blowing up
	(strict transforms) of coordinate subspaces of $\RP^n$ starting with points, lines, etc. 
	We know from \Cref{lem:ProjectiveCase}
	that $\Rnn \bL \subset \Rnn\PP^n$ is a standard pair of cells, 
	so it remains to 
	prove that the topology does not change under the aforementioned blow-ups.
	For clarity, we denote the closures in $\Rnn\PP^n$ and $\Rnn\Pi^n$ 
	by $\Rnn^\PP \bL$ and $\Rnn^\Pi \bL$, respectively. 
	The local model for all these blow-ups is blowing up $\Rnn^k \times \R^{n-k}$ along
	$\{0\}^k \times \R^{n-k}$ 
	(the image of $\bL$ after this change of coordinates is still a linear subvariety
	in $(\C^*)^n$ which we call $\bL$ again by abuse of notation). 
	We may assume that $0 \in \Rnn^\PP \bL$. 
	In order to compare $\Rnn^\PP \bL$ and $\Rnn^\Pi \bL$
	it is sufficient to consider one slice $\Rnn^k \times \{0\}^{n-k}$,
	that is, we may further reduce to the situation of the blow-up $\Bl_0 \Rnn^n$ 
	of $\Rnn^n$ in the origin $0$. 
	By abuse of notation, we now denote by $\Rnn^\PP \bL$ and $\Rnn^\Pi \bL$
	their restriction to $\Rnn^n$ and $\Bl_0 \Rnn^n$.
	
	The blow-up can be constructed explicitly as 
	\[	  
	  \R_{\triangle}^n = \{x \in \Rnn^n : \sum_i x_i \geq 1\} 
	\]
	with blow-down map given by 
	$\pi \colon x \mapsto (1 - \frac{1}{\sum_i x_i}) x$. 
	Under this identification, $\Rnn^\Pi \bL \subset \Bl_0 \Rnn^n$
	is mapped to $\Rnn^\triangle \bL := \Rnn^\PP \bL \cap \R_{\triangle}^n \subset \R_{\triangle}^n$. 
	We now construct a homeomorphism $\varphi \colon \R_{\triangle}^n \to \Rnn^n$
	such that $\varphi(\Rnn^\triangle \bL) = \Rnn^\PP \bL$. 
	We fix a point $a \in \Rpos\bL$ such that $\sum_i a_i = 1$. 
	For any $I \subsetneq \{1, \dots, n\}$, we consider the simplices
	\begin{align*} 
		S_I = \Conv(a, 2a, e_i, i \in I), & & T_I = \Conv(0, 2a, e_i, i \in I).
	\end{align*}
	Then $\varphi$ is the piecewise linear map that maps
	each $S_I$ to $T_I$ by sending $a \mapsto 0$ and being identity
	on all other vertices as well as outside of the union of the $S_I$. 
	Clearly, $\varphi$ yields a homeomorphism with
	$\varphi(\Rnn^\triangle \bL) = \Rnn^\PP \bL$.
\end{proof}

\subsection{Semi-stable degenerations --- classical side}\label{subsec:classical} 

In this section we continue with the notation and set up of \Cref{UnimodularPatchworking}. 
We first recall the main elements and explain its connection to \cite{Rau-RealSemiStable}. 

Again let $\PPP$ be a strongly unimodular subdivision of $\R^n$.
As mentioned, the set of recession cones  $\Sigma = \RecCone(\PPP)$ is a pointed unimodular complete fan.
We set $\widetilde{\PPP} = C(\PPP)$.

Note that $\C \widetilde{\PPP}$ is a $(n+1)$-dimensional non-singular toric variety 
that comes together with a toric morphism  
$f \colon \C \widetilde{\PPP} \to \C$. 
The generic fibre $f^{-1}(t)$, $t\neq 0$, is the toric variety $\C \Sigma$. 
The orbits of the torus action on the special fibre 
$\C \widetilde{\PPP}_0 = f ^{-1}(0)$ are in canonical bijection with the faces $\sigma$ of $\PPP$. 
Their closures are the toric varieties $\C \Star_\sigma \PPP$.

We set
\[
  \C \PPP := f^{-1}(\D).
\]
By abuse of notation, we will use the same letter $f$ for the restriction
$f \colon \C\PPP \to \D$, and also write $\C\PPP_0 = \C \widetilde{\PPP}_0 = f ^{-1}(0)$.

Now let $\XX$ be an analytic family over $\D^*$. 
Note that $(\C^*)^n \times \D^* \subset \C\PPP = f^{-1}(\D)$. 
We denote by $\overline{\XX}$ the closure of $\XX$
in $\C\PPP$, and set $\XX_0 = \overline{\XX} \cap \C\PPP_0$.

\begin{proposition} \label{prop:SchoenPropertiesII}
  Under the assumptions of \Cref{UnimodularPatchworking}, 
	the following holds true. 
	\begin{enumerate}
		\item 
			For any $\sigma \in \PPP$, the scheme-theoretic intersection 
			$\overline{\XX}	\cap \C\OOO_{C(\sigma)}$ in $\C\PPP$ is 
			equal to $\XX_\sigma = \init_\sigma \XX / \Tbold_\sigma$
			(note that $\C\OOO_{C(\sigma)} \cong \C\OOO_{\sigma}$).
		\item 
		  The scheme $\overline{\XX}$ is non-singular and 
			for any $\sigma \in \PPP$ the intersection 
			$\overline{\XX}	\cap \C\OOO_{C(\sigma)}$ in $\C\PPP$ 
			is transversal. 
	\end{enumerate}
\end{proposition}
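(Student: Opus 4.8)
The plan is to deduce both statements from the fact, established in \Cref{prop:SchoenPropertiesI} and its proof, that $\XX$ is a schön subvariety of the torus, so that $\init_p \XX$ is non-singular for every $p \in X$, and more structurally, that any subdivision of $X$ is tropical in the sense of Section \ref{subsection NSTL}. The key input is that the toric degeneration $f \colon \C\PPP \to \D$ is precisely the Mumford-type degeneration associated to the strongly unimodular subdivision $\PPP$: for $\sigma \in \PPP$, the cone $C(\sigma) \subset \R^n \times \Rnn$ is unimodular, and the torus orbit $\C\OOO_{C(\sigma)}$ of $\C\widetilde\PPP$ maps into the special fibre $\C\PPP_0$ (since $C(\sigma)$ meets $\R^n \times \{0\}$ in a face unless $\sigma$ is itself a cone of $\Sigma$), with $\C\OOO_{C(\sigma)} \cong \C\OOO_\sigma \subset \C\Sigma$ as an abstract torus of dimension $n - \dim\sigma$. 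This is the setup of Nishinou--Siebert and Helm--Katz that the section already invokes.

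First I would establish (1). The statement $\overline\XX \cap \C\OOO_{C(\sigma)} = \XX_\sigma$ is a local computation near a generic point $p$ of the relative interior of $\sigma$. Choosing a lattice basis adapted to the strong unimodularity of $\sigma$ (so that $C(\sigma) = \langle v_0, \ldots, v_k, f_1, \ldots, f_l\rangle$ with these vectors part of a basis of $\Z^{n+1}$), one gets toric coordinates $(z_0, \ldots, z_n)$ on a chart of $\C\PPP$ in which $f$ is a monomial and $\C\OOO_{C(\sigma)}$ is the coordinate stratum $\{z_0 = \cdots = z_k = 0\}$. In these coordinates the closure $\overline\XX$ is cut out, by definition of the tropical limit and the initial degeneration, by the initial forms $\init_\sigma g$ of the defining Laurent series $g$ of $\XX$; restricting these initial forms to the stratum and taking the quotient by $\Tbold_\sigma = T(\sigma) \otimes \C^*$ yields exactly $\XX_\sigma = \init_\sigma\XX / \Tbold_\sigma$ by Condition (2) of a tropical subdivision. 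I would cite \cite{NishinouSiebert} and \cite{SpeyerThesis} for the precise matching of initial ideals with the scheme-theoretic restriction. The parenthetical $\C\OOO_{C(\sigma)} \cong \C\OOO_\sigma$ follows from the lattice isomorphism $\Z^{n+1}/(\Z\langle v_0,\ldots,v_k\rangle) \cong \Z^n / T_\Z(\sigma)$ induced by projecting $C(\sigma)$ forgetting the last coordinate.

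Next I would prove (2). Non-singularity of $\overline\XX$: since $\XX$ itself is non-singular (it is schön, hence smooth over the dense torus in any toric chart), and at a boundary point $p$ lying in the stratum $\C\OOO_{C(\sigma)}$ the completed local ring of $\overline\XX$ decomposes, via the chosen toric coordinates, as a product of the local ring of $\XX_\sigma$ (which is smooth by \Cref{prop:SchoenPropertiesI}, being a quotient of the smooth $\init_\sigma\XX$ by a free torus action) with an affine space coming from the normal directions of the orbit; hence $\overline\XX$ is smooth at $p$. The transversality statement $\overline\XX \pitchfork \C\OOO_{C(\sigma)}$ is equivalent, in the same local coordinates, to the statement that $\overline\XX$ is defined near $p$ by $\dim \XX - \dim \XX_\sigma = \dim\sigma$ equations whose linear parts involve only the normal coordinates $z_0, \ldots, z_k$ --- which again is a reformulation of Condition (2), namely that $\init_\sigma\XX$ is invariant under $\Tbold_\sigma$, combined with non-singularity of $\XX_\sigma$. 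Equivalently, one observes that for schön subvarieties the closure $\overline\XX$ in any unimodular toric variety meets every torus orbit transversally --- this is essentially Tevelev's characterization \cite{Tev-CompactificationsSubvarietiesTori} --- and $\C\PPP \subset \C\widetilde\PPP$ is such a toric variety.

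The main obstacle I expect is bookkeeping: carefully setting up the adapted toric coordinates on $\C\PPP$ near a boundary stratum, tracking how the defining series of $\XX$ pass to their initial forms under the coordinate change, and checking that the scheme-theoretic (not just set-theoretic) intersection is the right one. None of the individual steps is deep given the schön property, but the precise matching of the analytic-family picture with the Nishinou--Siebert toric degeneration picture is where the care is needed, and it is mostly a matter of invoking \cite{SpeyerThesis,NishinouSiebert,Helm-Katz} at the appropriate granularity rather than reproving their results.
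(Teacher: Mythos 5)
Your proposal is correct and takes essentially the same route as the paper, whose proof is just two lines: by \Cref{prop:SchoenPropertiesI} the subdivision $\X$ is tropical, item (1) is then contained in \cite[Proposition 2.4.4]{SpeyerThesis}, and item (2) follows because the orbit intersections are non-singular of expected dimension by (1). The one caution is that your completed-local-ring product decomposition in (2) is essentially equivalent to the transversality being proved, so the argument should rest on your alternative appeal to the schön characterization (in the non-constant-coefficient setting this is \cite{LQ-SomeResultsTropical} rather than \cite{Tev-CompactificationsSubvarietiesTori}), which is exactly the input the paper uses implicitly via \Cref{prop:SchoenPropertiesI}.
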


\begin{proof}
  By \Cref{prop:SchoenPropertiesI}, the subdivision $\X = \{\sigma \in \PPP : \sigma \subset X\}$
	is tropical. 
	Then, item (1) is for example contained in \cite[Proposition 2.4.4]{SpeyerThesis}.
	Item (2) follows since the intersection is non-singular and of expected dimension 
	by (1). 
\end{proof}

As before, given a polyhedron $\sigma$ in $\R^n$, we
use the shorthand
\[
  \Z_2(\sigma) = \Z_2^n / T_{\Z_2} (\sigma).
\]
Given the data of \Cref{UnimodularPatchworking}, we define the following poset:
\begin{equation} \label{eq:Qposet} 
  Q(\PPP) = \{ (\sigma, \varepsilon) : \sigma \in \PPP, \varepsilon \in \Z_2(\RecCone(\sigma)) \},
\end{equation}
cf.\ $Q(\Sigma)$ from \Cref{sec:tropicalPatchworking}. 
The partial order is given by 
\[
  (\sigma, \varepsilon) \leq (\tau, \delta) :\Longleftrightarrow 
	\tau \subset \sigma, \varepsilon = \pi(\delta),
\]
where $\pi \colon \Z_2(\RecCone(\tau)) \to \Z_2(\RecCone(\sigma))$ denotes
the canonical projection. We also consider the subposet
\begin{equation} \label{eq:QposetII} 
  Q(\X, \EEE) = \{(\sigma, \varepsilon) \in Q(\PPP) : \sigma \in \X,
					\varepsilon \in \pi(\EEE(\sigma)) \} \subset Q(\PPP).
\end{equation}
Here, $\pi \colon \Z_2^n \to \Z_2(\RecCone(\sigma))$ 
			denotes the canonical projection.

\begin{thm} \label{thm:ClassicalSide}
  Under the assumptions of \Cref{UnimodularPatchworking} and 
	for sufficiently small and positive $t \in \D^* \cap \R$,
	we can equip $\R\Sigma$ with the structure 
	of a regular CW complex such that
	\begin{enumerate}
		\item 
		  the poset $Q(\PPP)$ is isomorphic to the face poset of this CW complex
			via an assignment $(\sigma, \varepsilon) \mapsto C(\sigma, \varepsilon)$,
		\item 
		  the cell $C(\sigma, \varepsilon)$ is contained in $\R\Star_\rho \Sigma \subset \R\Sigma$ 
			if and only if $\rho \subset \RecCone(\sigma)$,
		\item 
		  the pair $\R \overline{\XX}_t \subset \R\Sigma$ is a regular CW pair with respect
			to this CW complex and the set of cells intersecting $\R \overline{\XX}_t$ corresponds
			to $Q(\X, \EEE)$, 
			\[
			  Q(\X, \EEE) = \{(\sigma, \varepsilon) \in Q(\PPP) : C(\sigma, \varepsilon) \cap \R \overline{\XX}_t \neq \emptyset\}.
			\]
	\end{enumerate}
\end{thm}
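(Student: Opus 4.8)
The plan is to realise $\R\Sigma$ as the real part $f^{-1}(t)_\R$ of the generic fibre of the toric semi-stable degeneration $f\colon\C\PPP\to\D$ attached to $\widetilde\PPP=C(\PPP)$, and to transfer to it the combinatorial description of the real nearby fibre of a real semi-stable degeneration provided by \cite{Rau-RealSemiStable}. First I would record the geometry: since $\PPP$ is strongly unimodular, $\widetilde\PPP$ is pointed unimodular with support $\R^n\times\Rnn$, so $\C\widetilde\PPP$ is smooth, $f$ is flat, and the reduced special fibre $\C\PPP_0$ is a simple normal crossings divisor whose strata are the torus orbits $\C\OOO_{C(\sigma)}\cong\C\OOO_\sigma$, $\sigma\in\PPP$, with closures $\C\Star_\sigma\PPP$; everything is defined over $\R$. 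By \Cref{prop:SchoenPropertiesII}, $\overline\XX$ is smooth, meets each $\C\OOO_{C(\sigma)}$ transversally, and $\overline\XX\cap\C\OOO_{C(\sigma)}=\XX_\sigma$, so $(f\colon\C\PPP\to\D,\ \overline\XX)$ is a real semi-stable degeneration together with a smooth subfamily transversal to all strata. (Note also that a locally finite subdivision of $\R^n$ whose recession cones form a fan is finite, so all CW complexes below are finite.)

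By \cite{Rau-RealSemiStable}, for sufficiently small and positive $t$ the real nearby fibre $\R\Sigma$ carries a regular CW structure whose cells are indexed by pairs (stratum $\C\OOO_{C(\sigma)}$ of $\C\PPP_0$, sheet of $\R\Sigma$ lying over it via the retraction onto $\C\PPP_0$), the face relations being dual to incidences of strata and twisted by the sheet data. I would identify this index set with $Q(\PPP)$: working in toric coordinates adapted to $C(\sigma)=\langle(v_0,1),\dots,(v_k,1),(f_1,0),\dots,(f_l,0)\rangle$ --- with $k+l=\dim\sigma$, the $v_i$ the vertices of the bounded part of $\sigma$ and the $f_j$ a lattice basis of $\rho:=\RecCone(\sigma)$ --- the morphism $f$ is the monomial $u_0\cdots u_k$, and for $t>0$ the sheets over $\C\OOO_{C(\sigma)}$ are labelled by the $2^{n-k-l}$ components of $\R\OOO_{C(\sigma)}$ together with the $2^k$ sign vectors $(\sign(u_0),\dots,\sign(u_k))$ of product $+1$; the lattice-basis relation among the $v_i$ and $f_j$ identifies this set canonically with $\Z_2^n/T_{\Z_2}(\rho)=\Z_2(\RecCone(\sigma))$, and running the coordinates $u_i$, $w_j$ toward neighbouring charts reproduces exactly the cover relations of \eqref{eq:Qposet} (so $(\sigma,\varepsilon)\mapsto C(\sigma,\varepsilon)$ is the desired poset isomorphism). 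Property (2) is then toric bookkeeping: $\rho'\times\{0\}$ is a face of $C(\sigma)$ iff $\rho'\subseteq\RecCone(\sigma)$, hence $\C\OOO_{C(\sigma)}$ lies in the closure of the generic-fibre orbit $\C\OOO_{\rho'}$ iff $\rho'\subseteq\RecCone(\sigma)$, and \cite{Rau-RealSemiStable} transports this to $C(\sigma,\varepsilon)\subset\R\Star_{\rho'}\Sigma$ iff $\rho'\subseteq\RecCone(\sigma)$.

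For property (3), I would use transversality of $\overline\XX$ to the strata together with the local models of \Cref{sec:local}: the non-singular tropical limit hypothesis makes $\overline\XX$ sch\"on (\Cref{prop:SchoenPropertiesI}), so each $\init_\sigma\XX$ is linear (\Cref{lemmaline}) and the results of \Cref{sec:local} apply; combined with \cite{Rau-RealSemiStable} this gives that $\R\overline\XX_t\subset\R\Sigma$ is a regular CW pair with each $C(\sigma,\varepsilon)\cap\R\overline\XX_t$ a standard pair of cells of codimension $n-d$. To identify the cells meeting $\R\overline\XX_t$ with $Q(\X,\EEE)$, note that $C(\sigma,\varepsilon)$ meets $\R\overline\XX_t$ iff $\overline\XX$ meets the stratum $\C\OOO_{C(\sigma)}$ and the sheet $\varepsilon$ is realised there. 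The first condition reads $\XX_\sigma=\init_\sigma\XX/\Tbold_\sigma\neq\emptyset$, i.e.\ $\sigma\subset X$, i.e.\ $\sigma\in\X$; unwinding the local model of the previous paragraph, the second reads that $\varepsilon$ lies in the image under $\pi\colon\Z_2^n\to\Z_2(\RecCone(\sigma))$ of the set of orthants of $(\R^*)^n$ met by $\R\init_\sigma\XX$, which by \Cref{def:realphaselinear} is precisely $\pi(\EEE(\sigma))$. Hence the set of cells meeting $\R\overline\XX_t$ equals $Q(\X,\EEE)$ of \eqref{eq:QposetII}.

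I expect the main obstacle to be not analytic --- the hard work is packaged in \cite{Rau-RealSemiStable} --- but combinatorial: pinning down, compatibly with all face relations at once, the canonical identification of the sheet data of the real nearby fibre with $\Z_2(\RecCone(\sigma))$ (and, for the subfamily, with $\pi(\EEE(\sigma))$), and reconciling the ``dual'' nature of this CW structure (top cells $\leftrightarrow$ vertices of $\PPP$, vertices $\leftrightarrow$ facets of $\PPP$) with the reversed convention ``$\tau\subseteq\sigma$'' built into the order of $Q(\PPP)$.
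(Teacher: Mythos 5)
Your argument is correct and follows essentially the same route as the paper: realise $\R\Sigma$ as the real nearby fibre of the totally real semi-stable degeneration $f\colon\C\PPP\to\D$, invoke \cite{Rau-RealSemiStable} (whose Corollary~9.3 and Theorem~6.1 package exactly the stratum/sheet CW decomposition and its relative version that you re-derive by hand in local monomial coordinates), use \Cref{prop:SchoenPropertiesII} for transversality and the identification $\overline{\XX}\cap\C\OOO_{C(\sigma)}=\XX_\sigma$, and conclude via the standard-pair results of \Cref{sec:local} that the cells and the pairs with $\R\overline{\XX}_t$ are regular, with incidence data given by $Q(\PPP)$ and $Q(\X,\EEE)$. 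The only slip is your parenthetical finiteness claim, which is false as stated (the unit-integer subdivision of $\R$ is locally finite and its recession cones form the trivial fan, yet it is infinite); finiteness of $\PPP$ nevertheless holds in this setting because the subdivision produced in \Cref{lem:Refinements} comes from a complete fan in $\R^{n+1}$.
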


\begin{proof}[Proof of \Cref{thm:ClassicalSide}]
The map  $f \colon \C\PPP \to \D$ is a totally real 
	semi-stable degeneration, cf.\
	\cite[Lemma 9.1]{Rau-RealSemiStable}.
	It then follows by \cite[Theorem 1.1]{Rau-RealSemiStable}
	that the generic real fibre $\R\Sigma$ is homeomorphic
	to a certain real-oriented blow-up 
	$\Bl_+(\R\PPP_0)$ of the special real fibre $\R\PPP_0$.
	Instead of defining $\Bl_+(\R\PPP_0)$ carefully, we apply 
	\cite[Corollary 9.3]{Rau-RealSemiStable}
	which states that 
	$\Bl_+(\R\PPP_0)$ carries a stratification into closed
	subsets $C(\tau, \varepsilon)$ labelled by the elements $(\tau, \varepsilon) \in Q(\PPP)$
	whose inclusion relations are given by the partial order on $\PPP$. 
	More precisely, $C(\sigma, \varepsilon)$ is actually a copy 
	of $\R \Star_\sigma \PPP(\pi(\varepsilon))$,
	with $\pi \colon \Z_2(\RecCone(\sigma)) \to \Z_2(\sigma)$
	the canonical projection. 
	It follows that $C(\sigma, \varepsilon)$
	is an $(n-\dim \sigma)$-cell by \Cref{thm:StandardPairsClassical}. 
	The fact that these cells form a regular CW complex 
	satisfying the properties stated in items (1) and (2) 
	is part of \cite[Corollary 9.3]{Rau-RealSemiStable}.
	
	Let us now proceed to item (3). 
	It follows from \Cref{prop:SchoenPropertiesII} that 
	that $\XX$ together with the non-special toric boundary divisors in
	$\C\PPP$ (labelled by the rays of $\Sigma$) form a transversal 
	collection of submanifolds in the sense of 
	\cite[Section 6]{Rau-RealSemiStable}. 
	Hence by \cite[Theorem 6.1]{Rau-RealSemiStable}, 
	for sufficiently small positive $t$, the CW structure from 
	the previous paragraph can be chosen such that 
		  the pair $\R \overline{\XX}_t \cap C(\sigma, \varepsilon)  \subset C(\sigma, \varepsilon)$
			is homeomorphic to 
			$\overline{\XX}_0 \cap \R\Star_\sigma \PPP(\varepsilon) \PPP \subset \R\Star_\sigma \PPP(\varepsilon)$.
	By \Cref{prop:SchoenPropertiesII} 
	we have $\overline{\XX}_0 \cap \C \Star_\sigma \PPP = \overline{\XX}_\sigma$.
	Hence the pair in the last item is equal to 
	$\R\overline{\XX}_\sigma ( \varepsilon)\subset \R\Star_\sigma \PPP(\varepsilon)$.

	By definition, $\R\XX_\sigma (\varepsilon)$ is non-empty if and only if 
	$(\sigma, \varepsilon) \in Q(\X, \EEE)$.
	If non-empty, $\XX_\sigma$ is linear up to a coordinate change in 
	$\GL(n,\Z)$ and $\Trop(\XX_\sigma) = |\Star_\sigma \X|$ is a union of cones
	in $\Star_\sigma \PPP$. Hence, by \Cref{thm:StandardPairsClassical} 
	the pair 
	$\R\overline{\XX}_\sigma ( \varepsilon)\subset \R\Star_\sigma \PPP(\varepsilon)$
	is a standard pair
	of cells. 
	This proves the statement. 
\end{proof}

We note that the homeomorphisms from \cite{Rau-RealSemiStable} used above
are relative and stratified versions of similar results
in the literature: 
In \cite[Section 3 and 4]{Par-BlowAnalyticRetraction}, similar results are proven
in the context retracting a family to the special fibre. 
More general statements in the setting of log geometry and the Kato-Nakayama 
space appear in \cite[Theorem 5.1]{NO-RelativeRoundingToric} and 
\cite[Proposition 6.4]{Arg-RealLociLog}.

\subsection{Semi-stable degenerations --- tropical side}\label{subsec:tropical}

In this subsection, we consider the pair of spaces $(\PW\T\Sigma, \PW(\overline{X}, \EEE))$  
arising from patchworking $\overline{X}$ with a
real phase structure $\EEE$  
from Section \ref{sec:tropicalPatchworking}. 
Even though this pair comes with natural CW structures from the beginning, 
it will be convenient to study it via 
the tropical analogue of the the semi-stable degeneration $\C\PPP$ used in \Cref{subsec:classical}.
We will use this approach to construct 
a tropical \emph{special fibre} $(\T \PPP_{\infty},\overline{X}_\infty)$.  
We will demonstrate a homeomorphism of the pairs 
$(\T \PPP_{\infty}, \overline{X}_\infty)$ and $(\T \Sigma, \overline{X})$, 
see Proposition \ref{prop:TropicalSpecialFibre}.
This produces a homeomorphism of the pairs 
$(\PW\T\Sigma, \PW(\overline{X}, \EEE))$ 
and $(\PW \T \PPP^+_\infty, \PW(\overline{X},\EEE)^+_\infty)$. 
We then finish the proof of 
\Cref{UnimodularPatchworking} by showing that
$(\PW \T \PPP^+_\infty, \PW(\overline{X},\EEE)^+_\infty)$, 
and hence $(\PW\T\Sigma, \PW(\overline{X}, \EEE))$, 
is a regular CW pair whose poset structure
is the same as the one described in \Cref{thm:ClassicalSide}. 

We fix a strongly unimodular subdivision $\PPP$
of $\R^n$ and denote by $\Sigma = \RecCone(\PPP)$ its pointed 
unimodular recession fan. 
We denote by $\widetilde{\PPP} =C(\PPP)$ 
the pointed unimodular fan with support $\R^n \times \R_{\geq}$ 
described in \Cref{subsec:classical}.
The associated $(n+1)$-dimensional tropical toric variety $\T {\PPP} := \T \widetilde{\PPP}$ 
comes with a canonical projection to $\T$. The generic fibre $\T\PPP_t$, $t \in \R$, is a copy of the toric variety $\T\Sigma$. 
The special fibre over $-\infty$, denoted by $\T {\PPP}_{\infty}$,
is covered by the tropical toric varieties associated to the stars of all faces in $\PPP$,
\begin{equation} \label{eq:tropicalspecialfibre}
  \T {\PPP}_{\infty} = \bigcup_{\tau \in \PPP} \T \Star_\tau \PPP.
\end{equation}
Note that by \Cref{lem:TropicalClosures} this decomposition
is a regular CW complex whose face poset, via the given identification,
is isomorphic to $\PPP^\text{op}$ ($\PPP$ with reversed partial order).

Given a subcomplex $\X \subset \PPP$,
we denote by 
$\overline{\X}$ the closure of $\X$ 
as polyhedral complex in $\T \Sigma$ described in \Cref{eq:CanonicalComp}.
The underlying sets are denoted by $X$ and $\overline{X}$.
Similarly, we denote by $C(\X) = \{C(\sigma) : \sigma \in \X\}$
the associated subcomplex of $\widetilde{\PPP}$, by $C(X)$ its support and by $\overline{C(\X)}$ the closure of $C(\X) $ as a polyhedral complex in $\T {\tilde{\PPP}}$.

\begin{definition}\label{def:tropicalspecialfibre}
The support of the polyhedral complex
\[
  \overline{\X}_\infty = \overline{C(\X)} \cap \T {\PPP}_{\infty}.
\]
is the \emph{tropical special fibre} 
of $X$ associated to $\X$. We denote it by $\overline{X}_\infty$. 
\end{definition}

\begin{example}\label{ex:1dim}
Let us consider the small example of the subdivision $\PPP$ of $\R$ 
with vertices $\{0, \pm 1\}$ and $1$-cells $(-\infty,-1] \cup [-1,0] \cup [0,1] \cup [1,+\infty)$. 
In \Cref{fig:1dimDeg} we draw the fan $\tilde{P} = C(\PPP)$ on the left hand side. If we set $X=\left\lbrace -1 , 0 ,  1 \right\rbrace$, then $\overline{C(X)}$ consists of the three rays of $\tilde{\PPP}$. 
 
 On the right hand side of \Cref{fig:1dimDeg} is the tropical toric variety  $\T {\tilde{\PPP}}$. The special fibre 
 $\T {\PPP}_{\infty}$ is depicted in red. Moreover, the special fiber $\overline{X}_\infty$ is the contained in the closure of $C(X) \in \T {\tilde{\PPP}}$ and consists of the three points in blue.

\begin{figure}
\begin{tikzpicture}
\fill[color=gray!20] (-2.5,-1) -- (-2.5,1)--(2.5,1)--(2.5,-1);
    \draw[thick, -] (2.5,-1) -- (-2.5,-1) ;
    \draw (0,-1) node{$\bullet$};
    \draw (0,-1.3) node{\tiny{$(0,0)$}};
    \draw[thick, -] (0,-1) -- (-2,1) ;
     \draw[thick, -] (0,-1) -- (0,1) ;
    \draw[thick, -] (0,-1) -- (2,1) ;
    \draw (-1,0) node{$\bullet$};
    \draw (-0.7,0.3) node{\tiny{$(-1,1)$}};
    \draw (1,0) node{$\bullet$};
    \draw (0.7,0.3) node{\tiny{$(1,1)$}};
    \draw[thick, -] (-2.5,0) -- (2.5,0) ;
    \draw (-2.8,0) node{\tiny{$\PPP$}};  
    \draw (0,0) node{$\bullet$}; 
    
    \fill[color=gray!20] (5,-1)--(5,0) -- (5,1)--(6,2)--(8, 2)--(9,1)--(9,0)--(9,-1);
    \draw[thick, -] (5,-1) -- (5,1) ;
    \draw[thick, -, color=red] (5,1) -- (6,2) ;
    \draw[thick, -, color=red] (6,2) -- (7,2) ;
    \draw[thick, -,  color=red] (7,2) -- (8,2) ;
    \draw[thick, -,  color=red] (8,2) -- (9,1) ;
    \draw[thick, -] (9,1) -- (9,-1) ;
    \draw[thick, -] (7,0) -- (7,2);
        \draw[thick, -] (7,0) -- (5.5,1.5);
            \draw[thick, -] (7,0) -- (8.5,1.5);
    \draw[color=blue] (5.5,1.5) node{$\bullet$};
    \draw[color=blue] (7,2) node{$\bullet$};
    \draw[color=blue] (8.5,1.5) node{$\bullet$};
\end{tikzpicture}
\caption{On the left hand side is the  fan $C(\PPP)$ from \Cref{ex:1dim}. The polyhedral subdivision $\PPP$ is drawn at height one of $\R^2$ and the subcomplex $X$ in $\PPP$ consists of the three points.  On the right is the tropical toric variety  $\T {\tilde{\PPP}}$ with the special fiber $\T {\PPP}_{\infty}$ drawn in red. The special fiber  $\overline{X}_\infty$ is contained in $\T {\PPP}_{\infty}$ and consists of the three points drawn in blue. 
} \label{fig:1dimDeg}
\end{figure}
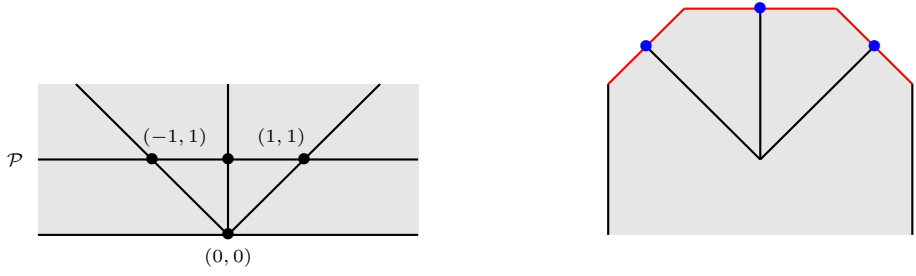
\end{example}

\begin{proposition} \label{prop:PolyhedralCWComplex}
  The polyhedral complexes $\overline{\X}$ and $\overline{\X}_\infty$ are regular CW complexes
	with supports $\overline{X}$ and $\overline{X}_\infty$, respectively. 
\end{proposition}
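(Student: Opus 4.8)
The plan is to deduce this directly from the structural results already established for canonical compactifications, together with \Cref{lem:TropicalClosures}. The key point is that both $\overline{\X}$ and $\overline{\X}_\infty$ are realized inside tropical toric varieties as closures of (sub)complexes of a polyhedral subdivision, so their faces are precisely the closed cells $\overline{\sigma} \cap \T\Star_\rho \Sigma$ described in \Cref{eq:CanonicalComp}, and we already know from \Cref{lem:TropicalClosures} (applied to a cone of $\Sigma$ and a subset of it that is a union of smaller cones) that each such face is a finite cube, hence a regular cell. First I would verify condition (1) of \Cref{def:RegularComplex}: a face of $\overline{\X}$ labelled by a pair $(\sigma, \rho)$ with $\rho \subset \RecCone(\sigma)$ equals the closure in $\T\Star_\rho \Sigma$ of $\sigma/T(\rho)$, and by the cube description in the proof of \Cref{lem:TropicalClosures} this closure is homeomorphic to a product of a bounded polytope (the ``$\kappa$'' part of $\sigma$, which is a polytope since it has finitely many vertices) with a finite cube $[0,1]^{\dim \RecCone(\sigma)/\rho}$ coming from the unbounded directions; such a product is a ball of dimension $\dim\sigma - \dim\rho$, hence a regular cell. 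For $\overline{\X}_\infty$, by \Cref{def:tropicalspecialfibre} its faces are the intersections $\overline{C(\sigma)} \cap \T\Star_\tau\PPP$, and the same cube argument applies verbatim inside $\T\PPP_\infty$.

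Next I would check condition (2), that the relative interiors partition the support. This is essentially the content of the poset description in \Cref{eq:PosetCompactification}: the relative interior of the face labelled by $(\sigma,\rho)$ is $\relint(\sigma)/T(\rho)$ sitting inside $\T\OOO_\rho$, and since the relative interiors of the $\sigma \in \X$ partition $X$ and the torus orbits $\T\OOO_\rho$ partition $\T\Sigma$, the relative interiors of the faces of $\overline{\X}$ partition $\overline{X}$. For $\overline{\X}_\infty$ one uses instead that $\T\PPP_\infty = \bigcup_{\tau\in\PPP}\T\Star_\tau\PPP$ is a regular CW complex whose cells are indexed by $\PPP^{\mathrm{op}}$ (as noted right after \Cref{eq:tropicalspecialfibre}), so the cells of $\overline{\X}_\infty$ are indexed by pairs $(\sigma,\tau)$ with $\sigma\in\X$, $\tau\in\PPP$, $\tau\subset\overline{\sigma}$ in the appropriate sense, and again the relative interiors partition $\overline{X}_\infty$.

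Condition (3) — that the relative boundary of each face is a union of lower-dimensional faces — follows from the explicit cube/product description: the boundary of $\overline{\sigma}\cap\T\Star_\rho\Sigma$ decomposes as the faces coming from proper faces $\sigma' \leq \sigma$ (at the same sedentarity) together with the faces at higher sedentarity, i.e.\ $\overline{\sigma'}\cap\T\Star_{\rho'}\Sigma$ with $\rho \subsetneq \rho'$, $\rho'\subset\RecCone(\sigma')$, and all of these are smaller-dimensional faces of $\overline{\X}$ by the dimension count in step one. The same holds in $\T\PPP_\infty$. I do not expect a genuine obstacle here; the only mildly delicate point is keeping the two bookkeeping systems straight — pairs $(\sigma,\rho)$ for the sedentarity stratification of $\T\Sigma$ versus the stars $\T\Star_\tau\PPP$ for $\T\PPP_\infty$ — and making sure that in the latter case one is genuinely intersecting with a \emph{sub}complex $C(\X)\subset\widetilde{\PPP}$ so that the resulting faces are still cubes (which they are, since a face of a cube cut out by coordinate subspaces is a cube). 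I would wrap up by invoking the discussion after \Cref{def:RegularComplex} (the translation to characteristic maps via \cite[Proposition A.2]{Hatcher}) to conclude these are regular CW complexes in the classical sense as well.
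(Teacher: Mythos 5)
Your proposal is correct and takes essentially the same route as the paper: the paper isolates your key step as \Cref{lem:FaceIsBall} (a strictly unimodular polyhedron with a vertex splits as $\kappa + \rho$ with $T(\kappa)\cap T(\rho)=\{0\}$, so its closure is homeomorphic to $\kappa\times[0,\infty]^{k}$, a ball whose relative boundary is the union of its proper faces) and then declares the proposition an immediate consequence. Your explicit verification of conditions (2) and (3) of \Cref{def:RegularComplex} is precisely the bookkeeping the paper leaves implicit.
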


This is an immediate consequence of the following lemma.

\begin{lemma} \label{lem:FaceIsBall}
  Let $\sigma \subset \R^n$ be a strictly unimodular polyhedron containing a vertex. 
	Set $\rho = \RecCone(\sigma)$. Then $\overline{\sigma} \subset \T U_\rho$ is a 
	$\dim(\sigma)$-cell whose relative boundary $d \overline{\sigma}$
	is equal to the union of proper faces of 
	$\overline{\sigma}$.
\end{lemma}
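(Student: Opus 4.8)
The plan is to reduce the statement to the explicit combinatorial model already established in \Cref{lem:TropicalClosures}. Write $\sigma = \kappa + \rho$ with $\kappa = \Conv(x_0, x_0+e_1,\dots,x_0+e_k)$ and $\rho = \langle f_1,\dots,f_l\rangle_{\geq 0}$, where $e_1,\dots,e_k,f_1,\dots,f_l$ is part of a lattice basis of $\Z^n$; such a decomposition exists by the definition of strict unimodularity. After a unimodular change of coordinates and a translation (neither of which affects the topological type of the pair $d\overline\sigma \subset \overline\sigma \subset \T U_\rho$) we may assume $x_0 = 0$, $e_i$ and $f_j$ are standard basis vectors, so that $\sigma = \Conv(0,e_1,\dots,e_k) + \langle e_{k+1},\dots,e_{k+l}\rangle_{\geq 0}$ sitting inside $\R^{k+l}$, and $\rho = \langle e_{k+1},\dots,e_{k+l}\rangle_{\geq 0}$ is one of the coordinate cones of $\Sigma = \RecCone(\PPP)$. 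The fan of $\T U_\rho$ consists of $\rho$ and its faces, so $\T U_\rho \cong \T^l \times \R^{n-l}$, with the $\R^{n-l}$ factor splitting off trivially; hence it suffices to treat the pair inside $\T^l \times \R^k$ (the remaining coordinates contribute a contractible factor that changes nothing).

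Next I would describe $\overline\sigma$ explicitly in these coordinates. The closure of $\sigma$ in $\T U_\rho$ is obtained by allowing the recession coordinates $x_{k+1},\dots,x_{k+l}$ to take the value $+\infty$ (or $-\infty$, depending on the sign convention for $\T$); concretely, using the standard parametrization of $\rho \cong [0,\infty)^l$ and the compactification $[0,\infty)^l \hookrightarrow [0,\infty]^l$, the set $\overline\sigma$ is the product of the simplex $\kappa = \Conv(0,e_1,\dots,e_k)$ with the cube $[0,\infty]^l$. Fixing a homeomorphism $[0,\infty] \to [0,1]$ sending $0$ to $0$ (exactly as in the proof of \Cref{lem:TropicalClosures}), this identifies $\overline\sigma$ with $\Delta^k \times [0,1]^l$, which is a $(k+l) = \dim(\sigma)$-ball, so $\overline\sigma$ is a $\dim(\sigma)$-cell. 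The relative boundary $d\overline\sigma$ corresponds under this homeomorphism to $\partial(\Delta^k \times [0,1]^l) = (\partial\Delta^k \times [0,1]^l) \cup (\Delta^k \times \partial[0,1]^l)$. The first piece is, face by face, the closure of a proper face of $\kappa + \rho$ having the same recession cone; the second piece consists of the faces obtained by sending some recession coordinate $x_j$ to $+\infty$, i.e.\ the faces of $\overline\sigma$ of higher sedentarity, which by the description in \Cref{eq:CanonicalComp} and \Cref{subsectionPolyhedralComplexes} are precisely $\overline{\tau} \cap \T\Star_{\rho'}\Sigma$ for $\tau \leq \sigma$ and $0 \neq \rho' \subseteq \rho$. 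Since every proper face of $\sigma$ in the sense of \Cref{subsectionPolyhedralComplexes} is of exactly one of these two types, $d\overline\sigma$ is the union of the proper faces of $\overline\sigma$, as claimed.

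The only point requiring a little care — and the step I expect to be the main obstacle — is checking that the face poset of $\overline\sigma$ as a polyhedral complex in $\T U_\rho$ matches the cell structure of $\partial(\Delta^k \times [0,1]^l)$ exactly, with no missing or extra faces; in other words, that the bookkeeping of higher-sedentarity faces via \Cref{eq:PosetCompactification} is consistent with the product cube structure. This is purely combinatorial: faces of $\overline\sigma$ are pairs $(\tau,\rho')$ with $\tau \leq \sigma$ in the subdivision and $\rho' \subseteq \RecCone(\tau)$, and one verifies that $\RecCone(\tau)$ for a face $\tau$ of $\sigma = \kappa+\rho$ is exactly the face of $\rho$ spanned by the recession directions surviving in $\tau$, so the pairs $(\tau,\rho')$ biject with pairs (face of $\kappa$) $\times$ (face of the cube $[0,1]^l$), matching the product cell structure. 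Once this poset identification is in place, the homeomorphism from the coordinatewise map $[0,\infty]\to[0,1]$ respects it, and the lemma follows; \Cref{prop:PolyhedralCWComplex} is then immediate by gluing these cells along the direct limit governed by the poset of $\overline\X$, exactly as in \Cref{lem:TropicalClosures}.
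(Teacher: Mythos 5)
Your proof is correct and follows essentially the same route as the paper: decompose $\sigma = \kappa + \rho$ with $\kappa$ a unimodular simplex and $\rho$ a unimodular cone, observe $\overline{\sigma} = \kappa + \overline{\rho} \cong \kappa \times [0,\infty]^{l}$, and conclude that this product is a cell. The only difference is that you explicitly verify the matching of the face poset of $\overline{\sigma}$ with the product cell structure $\Delta^k \times [0,1]^l$, a bookkeeping step the paper's proof leaves implicit; this is a harmless (and welcome) elaboration rather than a different argument.
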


\begin{proof}
Since $\sigma$ is a strictly unimodular polyhedron containing at least one vertex, we have $\sigma = \kappa + \rho$, where $\kappa $ is a bounded simplex such that 
$T(\kappa)$ and $T(\rho)$ have trivial intersection $\{0\}$. 
 Moreover, 	the closure of $\sigma$ satisfies $\overline{\sigma} = \kappa + \overline{\rho}$. 
 Since $\rho$ is homeomorphic to $[0,\infty)^k$, and $\overline{\rho} $ is homeomorphic to $ [0,\infty]^k$, we have that $\overline{\sigma}$ is homeomorphic to $\kappa \times [0,\infty]^k$, which proves the claim.
\end{proof}

Next, we give a description of the combinatorial structure of $\overline{\X}_\infty$
in terms of $\X$. 
Given any poset $P$, we denote by $\Int(P)$ the poset of (closed) intervals
in $P$. Here, an interval is a subset of $P$ of the form
$[a,b] = \{c \in P: a \leq c \leq b\}$ for $a \leq b \in P$. 
The partial order on $\Int(P)$ is given by inclusion,  
meaning 
$$[a,b] \leq [c, d] \Longleftrightarrow  c \leq a , b \leq d.$$

\begin{lemma} \label{lem:PosetSpecial}
  The cell poset $\overline{\X}_\infty$
	is isomorphic to $\Int(\X)$. 
\end{lemma}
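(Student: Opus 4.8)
The plan is to use the combinatorial model of the canonical compactification recalled around \eqref{eq:PosetCompactification} to determine which cells of $\overline{C(\X)}$ lie in the special fibre $\T\PPP_\infty$, and then to match the resulting cell poset with $\Int(\X)$ directly.

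First I would record the orbit structure of $\T\PPP_\infty$ inside $\T\PPP = \T\widetilde{\PPP}$. Every cone of $\widetilde{\PPP} = C(\PPP)$ is either $C(\tau)$ for a unique $\tau\in\PPP$, or $\rho\times\{0\}$ for a unique $\rho\in\Sigma = \RecCone(\PPP)$, and the projection $\R^{n+1}\to\R$ onto the last coordinate maps $C(\tau)$ onto $\R_{\geq 0}$ and $\rho\times\{0\}$ to $\{0\}$. Hence the torus orbits of $\T\widetilde{\PPP}$ lying over $-\infty$ under the canonical projection $\T\PPP\to\T$ are exactly the $\T\OOO_{C(\tau)}$, $\tau\in\PPP$, so that $\T\PPP_\infty = \bigsqcup_{\tau\in\PPP}\T\OOO_{C(\tau)}$. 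This is a closed set, since $\overline{\T\OOO_{C(\tau)}}$ is again a union of orbits $\T\OOO_{C(\tau')}$, and it agrees with \eqref{eq:tropicalspecialfibre} because $\overline{\T\OOO_{C(\tau)}} = \T\Star_\tau\PPP$ under the identification $\R^{n+1}/T(C(\tau))\cong\R^n/T(\tau)$.

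Next I would apply \eqref{eq:PosetCompactification} to $C(\X)$, using that $\RecCone(\mu) = \mu$ for a cone $\mu$: the faces of $\overline{C(\X)}$ are in order-preserving bijection with the pairs $(\mu,\nu)$ with $\mu$ a cone of $C(\X)$ and $\nu$ a face of $\mu$, the order being $(\mu',\nu')\leq(\mu,\nu)$ iff $\mu'\leq\mu$ and $\nu\leq\nu'$, and the relative interior of the face $(\mu,\nu)$ lying in $\T\OOO_\nu$. A face $(\mu,\nu)$ of $\overline{C(\X)}$ is contained in $\T\PPP_\infty$ iff $\nu = C(\tau)$ for some $\tau\in\PPP$: if $\nu=C(\tau)$ the face lies in $\overline{\T\OOO_{C(\tau)}}\subseteq\T\PPP_\infty$, and conversely a face contained in $\T\PPP_\infty$ has its (nonempty) relative interior inside $\T\OOO_\nu\cap\T\PPP_\infty$, forcing $\T\OOO_\nu$ to be one of the orbits $\T\OOO_{C(\tau)}$. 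Combined with $\nu\leq\mu$ this forces $\mu = C(\sigma)$ with $\sigma\in\X$ and $C(\tau)\leq C(\sigma)$, i.e.\ $\tau\leq\sigma$ in $\X$. Since every point of $\overline{\X}_\infty = \overline{C(\X)}\cap\T\PPP_\infty$ lies in the relative interior of a unique face of $\overline{C(\X)}$, it follows (together with \Cref{prop:PolyhedralCWComplex}) that the cells of the regular CW complex $\overline{\X}_\infty$ are precisely the faces $(C(\sigma),C(\tau))$ with $\tau\leq\sigma$ in $\X$.

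Finally the isomorphism is $[\tau,\sigma]\mapsto(C(\sigma),C(\tau))$: by the above it is a bijection from $\Int(\X)$ onto the cell poset of $\overline{\X}_\infty$, and from the face order above, $(C(\sigma'),C(\tau'))\leq(C(\sigma),C(\tau))$ iff $\sigma'\leq\sigma$ and $\tau\leq\tau'$, which is exactly the relation $[\tau',\sigma']\leq[\tau,\sigma]$ of inclusion of intervals in $\Int(\X)$; so it is an order isomorphism. I expect the only genuinely non-formal point to be the bookkeeping in the third paragraph — the dichotomy for the cones of $C(\PPP)$ and the verification that a face $(\mu,\nu)$ of $\overline{C(\X)}$ with $\nu$ (or $\mu$) of the form $\rho\times\{0\}$ never lies in $\T\PPP_\infty$ — after which the remaining steps are direct translations through \eqref{eq:PosetCompactification}.
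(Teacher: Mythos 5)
Your argument is correct and follows essentially the same route as the paper: the paper's proof likewise identifies the cells of $\overline{\X}_\infty$ with the non-empty intersections $\overline{C(\sigma)} \cap \T\Star_\tau \PPP$ and invokes the same non-emptiness criterion (\cite[Proposition 3.2.4]{MikRau}, which also underlies \eqref{eq:PosetCompactification}) to see these are labelled exactly by the pairs $\tau \subset \sigma$, i.e.\ by $\Int(\X)$. Your extra bookkeeping — the dichotomy between cones $C(\tau)$ and $\rho \times \{0\}$ of $C(\PPP)$ and the check of which faces of $\overline{C(\X)}$ lie in the special fibre — is a more explicit rendering of steps the paper leaves implicit, not a different method.
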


\begin{proof}
	By definition, the faces of $\overline{\X}_\infty$ are the non-empty sets among
	\[
	  \overline{C(\sigma)} \cap \T\Star_\tau \PPP
	\]
	for $\sigma \in \overline{\X}_\infty$, $\tau \in \PPP$. 
	But $\overline{C(\sigma)} \cap \T\Star_\tau \PPP$
	is non-empty if and only if $C(\sigma) \cap \relint(C(\tau))$
	is non-empty (\cite[Proposition 3.2.4 (d)]{MikRau}) if and only if $\tau \subset \sigma$. 
	Hence the claim follows. 
\end{proof}

\begin{remark} \label{remX=P}
  Applying the construction to $\X = \P$, we obtain a polyhedral complex
	$\overline{\X}_\infty$ whose support is $\T\PPP_\infty$. 
	By \Cref{lem:FaceIsBall} and \Cref{lem:PosetSpecial} $\overline{\X}_\infty$ is a regular
	CW complex isomorphic as a poset to $\Int(\PPP)$. 
	On the other hand, we already  equipped  $\T\PPP_\infty$ with a regular
	CW structures given by \Cref{eq:tropicalspecialfibre} whose cell poset
	is isomorphic to $\PPP^\text{op}$. In fact, it is easy to see
	that $\overline{\X}_\infty$ is a cubical subdivision of $\T\PPP_\infty$
	which, on the level of cell posets, corresponds to passing from
	$\PPP^\text{op}$ to $\Int(\PPP^\text{op})$. Notice that the poset $\Int(\PPP^\text{op})$ is in fact equal to $\Int(\PPP)$. 	The following example illustrates this process. For clarity,
	we write $\T\PPP_\infty$ for the CW complex from \Cref{eq:tropicalspecialfibre}
	and $\T\PPP_\infty^\text{cub}$ for the CW complex $\overline{\X}_\infty$ when $\X = \PPP$. 
\end{remark}

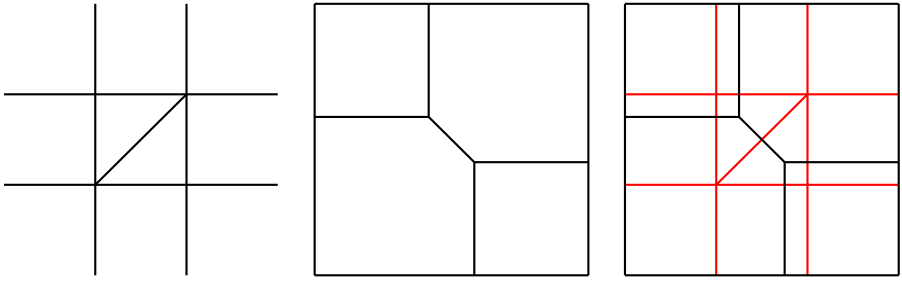
\begin{figure}

\begin{tikzpicture}[scale=0.6]
\coordinate (O) at (0,0);
\coordinate (P) at (2,2) ;
    \draw[thick, -] (4,0) -- (O) ;
    \draw[thick, -] (O) -- (-2,0) ;
    \draw[thick, -] (O) -- (0,-2) ;
       \draw[thick, -] (0,4) -- (O) ;
    \draw[thick, -] (O) -- (P) ;
    \draw[thick, -] (2,4) -- (P);
      \draw[thick, -] (P) -- (2,-2);
    \draw[thick, -] (4,2) -- (P) ;
     \draw[thick, -] (P) -- (-2,2) ;
    \end{tikzpicture}
\hspace{0.2cm}
\begin{tikzpicture}[scale=0.6]
      
    \draw[thick, -] (-2,1.5) -- (0.5,1.5) ;
    \draw[thick, -] (0.5,1.5) -- (0.5,4) ;
    \draw[thick, -] (0.5,1.5) -- (1.5,0.5) ;
     \draw[thick, -] (1.5,0.5) -- (4,0.5) ;
    \draw[thick, -] (1.5,0.5) -- (1.5,-2) ;
    
    \draw[thick, -] (-2,-2) -- (4,-2) ;
    \draw[thick, -] (4,-2) -- (4,4) ;
    \draw[thick, -] (-2,-2) -- (-2,4) ;
    \draw[thick, -] (-2,4) -- (4,4) ;
    \end{tikzpicture}
\hspace{0.2cm}
\begin{tikzpicture}[scale=0.6]
\coordinate (OO) at (7,0);
\coordinate (PP) at (9,2) ;
    \draw[thick, -, red] (11,0) -- (OO) ;
     \draw[thick, -, red] (OO) -- (5,0) ;
    \draw[thick, -, red] (7,4) -- (OO) ;
    \draw[thick, -, red] (OO) -- (7,-2) ;
    \draw[thick, -, red] (OO) -- (PP) ;
    \draw[thick, -, red] (9,4) -- (PP);
    \draw[thick, -,  red] (PP) -- (9,-2);
    \draw[thick, -, red] (11,2) -- (PP) ;
        \draw[thick, -, red] (PP) -- (5,2) ;
    
    \draw[thick, -] (5,-2) -- (11,-2) ;
    \draw[thick, -] (11,-2) -- (11,4) ;
    \draw[thick, -] (5,4) -- (11,4) ;
    \draw[thick, -] (5,-2) -- (5,4) ;
    
    \draw[thick, -] (5,1.5) -- (7.5,1.5) ;
    \draw[thick, -] (7.5,1.5) -- (7.5,4) ;
    \draw[thick, -] (7.5,1.5) -- (8.5,0.5) ;
     \draw[thick, -] (8.5,0.5) -- (11,0.5) ;
    \draw[thick, -] (8.5,0.5) -- (8.5,-2) ;   

\end{tikzpicture}
\caption{On the left is a strongly unimodular subdivision $\R^2$. In the middle the corresponding tropical special fibre, and on the right its cubical subdivision (the new cells of dimension less than $1$ are colored in red). } \label{fig:tropicalspecialfibre}
\end{figure}

\begin{example}\label{ex:ppp}
In Figure \ref{fig:tropicalspecialfibre}, we illustrate Remark \ref{remX=P} for a strongly unimodular subdivision  $\PPP$ of $\R^2$ (left-hand side of Figure \ref{fig:tropicalspecialfibre}). The tropical special fibre   $\T \PPP_{\infty}$ from \Cref{eq:tropicalspecialfibre} is depicted on the middle of the figure. On the right hand-side is depicted $\PPP_\infty$, called also $\T \PPP_{\infty}^{cub}$. As a cell complex, it is isomorphic to $\Int(\PPP)$.
\end{example}

To see that $(\T \PPP_{\infty}, \overline{X}_\infty)$ and $(\T \Sigma, \overline{X})$
are homeomorphic pairs, we will interpret $\overline{\X}_\infty$
as a certain type of cubical subdivision of $\overline{\X}$, which 
is explained next. 

A polyhedron $\kappa \in \R^n$ is \emph{bounded} if $\RecCone(\kappa) = \{0\}$. 
The subcomplex of bounded faces of $\X$ is denoted by 
\[
  \Bounded(\X) = \{\kappa \in \X : \RecCone(\kappa) = \{0\}\}.
\]
Notice that $\Bounded(\X)$ can be viewed  as a subcomplex of both $\X$ and $\overline{\X}$.
We will now construct polyhedral complexes refining $\X$ and $\overline{\X}$ 
by first taking a cubical subdivision of  $\Bounded(\PPP)$ 
and then extending this subdivision in a suitable way to infinity. 
We consider the subposet of $\Int(\Bounded(\X)) \times \Sigma$
given by 
\begin{equation} \label{eq:nextposet}
  P(\X) := \{([\lambda, \kappa], \rho) \in \Int(\Bounded(\X)) \times \Sigma : 
	  \kappa + \rho \in \X\}.   
\end{equation}

\begin{definition} \label{def:BoundedCubicalSubdivision}
  For any $\gamma \in \Bounded(\PPP)$, choose a point $p_\gamma$ in the 
	relative interior of $\gamma$. The \emph{bounded-cubical subdivision of $\X$}
	is the collection of (not necessarily rational) polyhedra in $\R^n$ given by
	\begin{align*} 
		\BC(\X) &:= \{F([\lambda, \kappa], \rho) : ([\lambda, \kappa], \rho) \in P(\X) \}, & \\
		F([\lambda, \kappa], \rho) &:= \Conv\{p_\gamma : \lambda \subset \gamma \subset \kappa\} + \rho 
			 & \subset  \R^n.
	\end{align*}
	Here, $\Conv$ denotes the convex hull. 
	The \emph{bounded-cubical subdivision of $\overline{\X}$}
	is the closure of $\BC(\X)$ in $\T\Sigma$, 
	\[
	  \BC(\overline{\X}) = \overline{\BC(\X)}. 
	\]
\end{definition}

We note that since we are only interested in topological properties, the 
non-rationality of $\BC(\X)$ does not present a problem. 
Alternatively, under the assumption that $\X$ is $\QQ$-rational, which is satisfied
in all our applications, the points $p_\sigma$ can be chosen in $\QQ^n$ which 
makes $\BC(\X)$ rational.

\begin{example}\label{ex:bcppp}
In Figure \ref{fig:boundedcc}, we consider on the left the same subdivision as in Example \ref{ex:ppp}. On the rest of the figure, we illustrate Definition \ref{def:BoundedCubicalSubdivision}. In the middle part, we choose a point (in red) for each bounded cell. On the right hand side is depicted $\overline{\BC(\PPP)}$. The right picture is the same as the one in Figure \ref{fig:boundedcc}: this is an illustration of Proposition \ref{prop:TropicalSpecialFibre}.

\begin{figure}
\begin{tikzpicture}[scale=0.6]
\coordinate (O) at (0,0);
\coordinate (P) at (2,2) ;
    \draw[thick, -] (4,0) -- (O) ;
    \draw[thick, -] (O) -- (-2,0) ;
    \draw[thick, -] (O) -- (0,-2) ;
       \draw[thick, -] (0,4) -- (O) ;
    \draw[thick, -] (O) -- (P) ;
    \draw[thick, -] (2,4) -- (P);
      \draw[thick, -] (P) -- (2,-2);
    \draw[thick, -] (4,2) -- (P) ;
     \draw[thick, -] (P) -- (-2,2) ;
    \end{tikzpicture}
\begin{tikzpicture}[scale=0.6]
\coordinate (OO) at (7,0);
\coordinate (PP) at (9,2) ;
    \draw[thick, -] (11,0) -- (OO) ;
     \draw[thick, -] (OO) -- (5,0) ;
    \draw[thick, -] (7,4) -- (OO) ;
    \draw[thick, -] (OO) -- (7,-2) ;
    \draw[thick, -] (OO) -- (PP) ;
    \draw[thick, -] (9,4) -- (PP);
    \draw[thick, -] (PP) -- (9,-2);
    \draw[thick, -] (11,2) -- (PP) ;
        \draw[thick, -] (PP) -- (5,2) ;
    
    \filldraw [red] (7,2) circle (2pt);
    \filldraw [red] (9,2) circle (2pt);
    \filldraw [red] (7,0) circle (2pt);
    \filldraw [red] (9,0) circle (2pt);

    \filldraw [red] (7.5,1.5) circle (2pt);
    \filldraw [red] (7,1.5) circle (2pt);
    \filldraw [red] (7.5,2) circle (2pt);
    \filldraw [red] (8.5,0.5) circle (2pt);
    \filldraw [red] (9,0.5) circle (2pt);
    \filldraw [red] (8.5,0) circle (2pt);
    \filldraw [red] (8,1) circle (2pt);
\end{tikzpicture}
\begin{tikzpicture}[scale=0.6]
\coordinate (OO) at (7,0);
\coordinate (PP) at (9,2) ;
    \draw[thick, -] (11,0) -- (OO) ;
     \draw[thick, -] (OO) -- (5,0) ;
    \draw[thick, -] (7,4) -- (OO) ;
    \draw[thick, -] (OO) -- (7,-2) ;
    \draw[thick, -] (OO) -- (PP) ;
    \draw[thick, -] (9,4) -- (PP);
    \draw[thick, -] (PP) -- (9,-2);
    \draw[thick, -] (11,2) -- (PP) ;
        \draw[thick, -] (PP) -- (5,2) ;

    \draw[thick, -,red] (5,1.5) -- (7.5,1.5) ;
    \draw[thick, -,red] (7.5,1.5) -- (7.5,4) ;
    \draw[thick, -,red] (7.5,1.5) -- (8.5,0.5) ;
     \draw[thick, -,red] (8.5,0.5) -- (11,0.5) ;
    \draw[thick, -,red] (8.5,0.5) -- (8.5,-2) ;   
\end{tikzpicture}
\caption{On the left the same subdivision as in Figure \ref{fig:tropicalspecialfibre}, on the middle in red one point for each bounded cell, and on the right the bounded cubical subdivision.}\label{fig:boundedcc}
\end{figure}
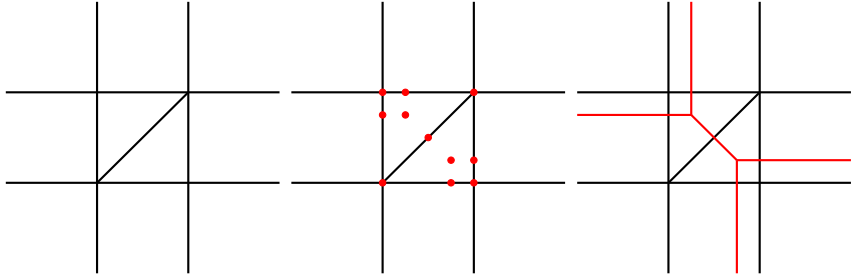

\end{example}

\begin{lemma} \label{prop:BoundedCubicalSubdivision}
  The collection of polyhedra $\BC(\X)$ is a polyhedral complex in $\R^n$ 
	refining $\X$ and such that
	$\RecCone(\BC(\X)) = \RecCone(\X)$. 
	The collection of polyhedra $\BC(\overline{\X})$
	is a polyhedral complex refining $\overline{\X}$.
	Moreover, $\BC(\overline{\X})$ is a regular CW complex. 
\end{lemma}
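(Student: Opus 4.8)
The plan is to reduce everything to the combinatorics of the cubical subdivision of a simplex and then to copy the proof of \Cref{lem:FaceIsBall}. First I would record the structure of a single face of $\X$. Since $\PPP$, hence $\X$, is strongly unimodular, every $\sigma\in\X$ decomposes uniquely as $\sigma=\kappa_\sigma+\rho_\sigma$, where $\kappa_\sigma\in\Bounded(\X)$ is its maximal bounded face — a \emph{simplex} — and $\rho_\sigma=\RecCone(\sigma)\in\Sigma$, with $T(\kappa_\sigma)\cap T(\rho_\sigma)=\{0\}$; the faces of $\sigma$ are then exactly the $\kappa'+\rho'$ with $\kappa'\le\kappa_\sigma$, $\rho'\le\rho_\sigma$, and the bounded faces of $\sigma$ are the faces of $\kappa_\sigma$. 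In particular $\Bounded(\X)$ is a simplicial complex, so every interval $[\lambda,\mu]$ of $\Bounded(\X)$ is a Boolean lattice. I would then invoke the standard fact that the \emph{cubical subdivision} of a simplicial complex — here the collection $Q_{[\lambda,\mu]}:=\Conv\{p_\gamma:\lambda\subseteq\gamma\subseteq\mu\}$, over intervals $[\lambda,\mu]$ of $\Bounded(\X)$ — is a polyhedral complex refining $\Bounded(\X)$, with cell poset $\Int(\Bounded(\X))$, such that $Q_{[\lambda,\mu]}$ is a combinatorial cube of dimension $\dim\mu-\dim\lambda$ whose faces are the $Q_{[\lambda',\mu']}$ for subintervals $[\lambda',\mu']\subseteq[\lambda,\mu]$, and with $\relint(Q_{[\lambda,\mu]})\subseteq\relint(\mu)$. (If one wants this self-contained, one writes $Q_{[\lambda,\mu]}$ as the union of the barycentric simplices of the chains of $[\lambda,\mu]$ joining $\lambda$ to $\mu$ and produces, for each subinterval, a rank-type supporting functional; this is the only non-formal point.)

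With this in hand, assembling $\BC(\X)$ is bookkeeping. Since $F([\lambda,\kappa],\rho)=Q_{[\lambda,\kappa]}+\rho\subseteq\kappa+\rho\in\X$, its recession cone is $\rho$ and $|\BC(\X)|\subseteq|\X|$; conversely, for $\sigma=\kappa_\sigma+\rho_\sigma\in\X$ the cells $F([v,\kappa_\sigma],\rho_\sigma)$ over the vertices $v$ of $\kappa_\sigma$ cover $\sigma$, so $|\BC(\X)|=|\X|$ and $\RecCone(\BC(\X))=\{\RecCone(\sigma):\sigma\in\X\}=\RecCone(\X)$. Using the face decomposition above, the cells of $\BC(\X)$ contained in a fixed $\sigma=\kappa_\sigma+\rho_\sigma\in\X$ are precisely the $Q_{[\lambda,\kappa]}+\rho$ with $\lambda\le\kappa\le\kappa_\sigma$ and $\rho\le\rho_\sigma$; that is, $\BC(\X)$ restricted to $\sigma$ is the product of the cubical subdivision of $\kappa_\sigma$ with the face fan of $\rho_\sigma$, a polyhedral complex subdividing $\sigma$ whose cells have faces again of this form (and these faces lie in $\BC(\X)$, since $\kappa'+\rho'\le\kappa+\rho$ belongs to $\X$). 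Because the points $p_\gamma$ are fixed once and for all, these local complexes agree on overlaps, so $\BC(\X)$ is a polyhedral complex refining $\X$ with $\RecCone(\BC(\X))=\RecCone(\X)$.

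For the compactification, note that $\RecCone(\X)\subseteq\Sigma$ is a pointed fan and $\BC(\X)$ contains a vertex (any $p_\gamma$), so the canonical compactification $\BC(\overline\X)=\overline{\BC(\X)}$ of \eqref{eq:CanonicalComp} is defined, is a polyhedral complex in $\T\Sigma$, and has support $\overline{|\BC(\X)|}=\overline X$. That it refines $\overline\X$ follows from the functoriality of the canonical compactification under subdivisions (cf.\ \cite{KSW}, \cite[Section 2.4]{AP}): each cell $\overline F\cap\T\Star_\rho\Sigma$ of $\BC(\overline\X)$, with $F\in\BC(\X)$ and $\rho\subseteq\RecCone(F)$, is — by \cite[Proposition 3.2.4]{MikRau} — the closure in $\T\Star_\rho\Sigma$ of $F/T(\rho)$, and if $\sigma\in\X$ contains $F$ then $F/T(\rho)\subseteq\sigma/T(\rho)$, so this cell is contained in the cell $\overline\sigma\cap\T\Star_\rho\Sigma$ of $\overline\X$.

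Finally, $\BC(\overline\X)$ is a regular CW complex by the argument of \Cref{prop:PolyhedralCWComplex}. Each of its cells is $\overline{F([\lambda,\kappa],\rho)}\cap\T\Star_{\rho'}\Sigma$ for some $\rho'\subseteq\rho$, which equals the closure in $\T\Star_{\rho'}\Sigma$ of $Q_{[\lambda,\kappa]}+\rho/T(\rho')$, the sum of a bounded polytope and a unimodular cone spanning complementary subspaces (because $T(Q_{[\lambda,\kappa]})\subseteq T(\kappa)$ and $\kappa+\rho\in\X$ is strongly unimodular). The proof of \Cref{lem:FaceIsBall} then applies verbatim with the simplex there replaced by the polytope $Q_{[\lambda,\kappa]}$: choosing primitive generators of $\rho/T(\rho')$ identifies the cell with $Q_{[\lambda,\kappa]}\times[0,\infty]^{\dim\rho-\dim\rho'}$, a ball whose relative boundary is the union of its proper faces, all of which are again cells of $\BC(\overline\X)$; hence \Cref{def:RegularComplex} is satisfied. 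The single genuine obstacle in this plan is the classical cubical-subdivision fact quoted in the first paragraph; the rest is manipulation of the decompositions $\sigma=\kappa_\sigma+\rho_\sigma$ together with a reuse of \Cref{lem:FaceIsBall}.
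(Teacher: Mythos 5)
Your proposal is correct and follows essentially the same strategy as the paper's proof: reduce everything to the cubical subdivision of the simplicial complex $\Bounded(\X)$, use strong unimodularity to write each face as $\kappa+\rho$ with all faces of the form $\kappa'+\rho'$, deduce the statement about $\BC(\overline{\X})$ from the affine one via the canonical compactification, and obtain the regular CW property from a variant of \Cref{lem:FaceIsBall} (cell $\cong Q_{[\lambda,\kappa]}\times[0,\infty]^{m}$). The only organizational difference is where the polyhedral-complex axioms are checked: the paper verifies them directly, showing that any face of $F([\lambda,\kappa],\rho)$ is again of the form $F([\lambda',\kappa'],\rho')$ and computing $F([\lambda,\kappa],\rho)\cap F([\lambda',\kappa'],\rho')=F([\lambda'',\kappa''],\rho'')$ with $\kappa''=\kappa\cap\kappa'$, $\rho''=\rho\cap\rho'$ and $\lambda''$ the smallest face of $\kappa''$ containing $\lambda$ and $\lambda'$, whereas you outsource exactly this to the quoted cubical-subdivision fact together with the local product/overlap argument; the mathematical content is the same, and your handling of the closure statement is if anything more explicit than the paper's ``the second phrase follows from the first''. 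One caveat you should make explicit (it applies equally to the paper's own definition and proof): the ``standard fact'' is sensitive to the choice of the points $p_\gamma$. For arbitrary points in the relative interiors, $\Conv\{p_\gamma:\lambda\subseteq\gamma\subseteq\mu\}$ need not be a combinatorial cube (the point $p_\mu$ may lie in the convex hull of the lower ones), two such cells can overlap in full dimension without meeting in a common face, and the convex hull then differs from the union of barycentric simplices of chains in $[\lambda,\mu]$ that you propose to use; choosing $p_\gamma$ to be the barycenter (or any choice for which each $Q_{[\lambda,\mu]}$ coincides with that union) removes the problem, and since only the topology of $\BC(\overline{\X})$ is ever used this costs nothing. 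With that proviso, which you at least flag as the one non-formal point, your argument is complete and matches the paper's.
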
 

\begin{proof}
The second phrase follows from the first. 
	In the first  statement, the claim about recession fans and the fact that the cones of 
	$\BC(\X)$ cover $X$ is obvious by construction. 
	We focus on showing that  $\BC(\X)$ is a polyhedral complex. 
	
	For $\lambda \subset \kappa \in \Bounded(\X)$, we set 
	$F(\lambda, \kappa) := F([\lambda, \kappa], 0)$. 
	By definition, these
	faces form a cubical subdivision of $\Bounded(\X)$. 
	The remaining faces are of the form $F(\lambda, \kappa) + \rho$
	for $\kappa + \rho \in \X$. Since $\kappa + \rho$ is strictly unimodular, 
	it follows that any face of $F(\lambda, \kappa) + \rho$
	is of the form $F(\lambda', \kappa') + \rho'$ where 
	$\lambda \subset \lambda' \subset \kappa' \subset \kappa$ and $\rho' \subset \rho$.
	Again since $\kappa + \rho$ is strictly unimodular, it is clear that $\kappa' + \rho' \in \X$, 
	hence $\BC(\X)$ is closed under taking faces. 
	
	Let now $F([\lambda, \kappa], \rho)$ and $F([\lambda', \kappa'], \rho')$ be two faces of 
	$\BC(\X)$ with non-empty intersection. Since $F([\lambda, \kappa], \rho) \subset \kappa + \rho$
	and $F([\lambda', \kappa'], \rho') \subset \kappa' + \rho'$, the intersection
	$(\kappa + \rho) \cap (\kappa' + \rho')$ is a non-empty face of $\X$. 
	Since $\X$ is a strictly unimodular,	
	$(\kappa + \rho) \cap (\kappa' + \rho')=(\kappa \cap \kappa') + (\rho \cap \rho')
	= \kappa'' + \rho''$, where $\kappa'' = \kappa \cap \kappa'$ and $\rho'' = \rho \cap \rho'$. 
	Since $F([\lambda, \kappa], \rho)$ and $F([\lambda', \kappa'], \rho')$
	intersect with $\kappa'' + \rho''$, we conclude 
	$\lambda, \lambda' \subset \kappa''$
	and 
  \begin{align*} 
	  F([\lambda, \kappa], \rho) \cap F([\lambda', \kappa'], \rho') 
		  &= F([\lambda, \kappa''], \rho) \cap F([\lambda', \kappa''], \rho') \\
			&= F([\lambda, \kappa''], \rho'') \cap F([\lambda', \kappa''], \rho'') \\
			&= F([\lambda'', \kappa''], \rho''),
	\end{align*}
	where $\lambda''$ is the smallest face of $\kappa''$ containing $\lambda$ and $\lambda'$. 
	
	For the third statement, we use a simple variation of
	\Cref{lem:FaceIsBall} and conclude that each of the polyhedra in  $\BC(\X)$ are homeomorphic to balls.  
\end{proof}

The relationship between $\BC(\overline{\PPP})$ and $\T {\PPP}_{\infty}^\text{cub}$
is as follows. 

\begin{proposition} \label{prop:TropicalSpecialFibre}
  There exists a canonical isomorphism of regular CW complexes from
	$\BC(\overline{\PPP})$ to $\T {\PPP}_{\infty}^\text{cub}$ 
	which respects toric strata and such that for any $\X \subset \PPP$
	the subcomplex $\BC(\overline{\X})$ is mapped to the subcomplex $\overline{\X}_\infty$. 
	In particular, 
	the pairs $(\T \Sigma, \overline{X})$ and $(\T {\PPP}_{\infty}, \overline{X}_\infty)$
	are homeomorphic. 
\end{proposition}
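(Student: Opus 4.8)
The plan is to identify $\BC(\overline{\PPP})$ and $\T\PPP_\infty^{\text{cub}}$ as regular CW complexes by matching their cell posets, and then to promote this combinatorial isomorphism to a homeomorphism using the barycentric subdivision technique from the proof of \Cref{prop:CWHomeomorphism}. That both are regular CW complexes is already known: $\T\PPP_\infty^{\text{cub}}=\overline{\PPP}_\infty$ by \Cref{prop:PolyhedralCWComplex} (with support $\T\PPP_\infty$), and $\BC(\overline{\PPP})$ by \Cref{prop:BoundedCubicalSubdivision} (with support $\T\Sigma$, since $\BC(\PPP)$ subdivides $\PPP$). By \Cref{lem:PosetSpecial} applied to $\X=\PPP$, the cell poset of $\overline{\PPP}_\infty$ is $\Int(\PPP)$, the interval $[\tau,\sigma]$ labelling the face $\overline{C(\sigma)}\cap\T\Star_\tau\PPP$; in this labelling the subcomplex $\overline{\X}_\infty$ is the subposet $\{[\tau,\sigma]\in\Int(\PPP):\sigma\in\X\}=\Int(\X)$ (equality because $\X$ is closed under faces), and the cell $[\tau,\sigma]$ has relative interior in the torus orbit $\T\OOO_{C(\tau)}$. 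So the task reduces to computing the cell poset of $\BC(\overline{\PPP})$ and producing an isomorphism to $\Int(\PPP)$ compatible with these two pieces of data.

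First I would read off the cell poset of $\BC(\overline{\PPP})=\overline{\BC(\PPP)}$. Since $\RecCone(\BC(\PPP))=\Sigma$ by \Cref{prop:BoundedCubicalSubdivision}, \Cref{eq:PosetCompactification} identifies it with $\{(F,\eta):F\in\BC(\PPP),\ \eta\in\Sigma,\ \eta\subseteq\RecCone(F)\}$, ordered by $(F',\eta')\le(F,\eta)\iff F'\le F$ in $\BC(\PPP)$ and $\eta\subseteq\eta'$. Writing $F=F([\lambda,\kappa],\rho)$ as in \Cref{def:BoundedCubicalSubdivision} (so $\RecCone(F)=\rho$) and using the face relation on $\BC(\PPP)$ from the proof of \Cref{prop:BoundedCubicalSubdivision}, namely $F([\lambda',\kappa'],\rho')\le F([\lambda,\kappa],\rho)$ iff $\lambda\subseteq\lambda'\subseteq\kappa'\subseteq\kappa$ and $\rho'\subseteq\rho$, this poset becomes
\[
  \widehat{P}=\{([\lambda,\kappa],\rho,\eta):\kappa+\rho\in\PPP,\ \lambda\subseteq\kappa\in\Bounded(\PPP),\ \eta\subseteq\rho\in\Sigma\},
\]
with $([\lambda',\kappa'],\rho',\eta')\le([\lambda,\kappa],\rho,\eta)$ iff $\lambda\subseteq\lambda'$, $\kappa'\subseteq\kappa$, $\rho'\subseteq\rho$ and $\eta\subseteq\eta'$. (If one wants $\BC(\PPP)$ rational one chooses the points $p_\gamma$ rational, as noted after \Cref{def:BoundedCubicalSubdivision}; this is immaterial for the poset.)

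The heart of the argument is the map $\Psi\colon\widehat{P}\to\Int(\PPP)$, $([\lambda,\kappa],\rho,\eta)\mapsto[\lambda+\eta,\ \kappa+\rho]$. Here I would exploit the structure of strongly unimodular polyhedra: such a $\sigma$ decomposes canonically as $\sigma=\Conv(\Ve(\sigma))+\RecCone(\sigma)$ with $\Conv(\Ve(\sigma))$ a simplex whose tangent space meets $T(\RecCone(\sigma))$ only in $0$, and, crucially, its faces are exactly the Minkowski sums $\lambda'+\eta'$ of a face $\lambda'$ of $\Conv(\Ve(\sigma))$ with a face $\eta'$ of $\RecCone(\sigma)$, all combinations occurring (using that the relevant primitive vectors extend to a lattice basis, so the required separating linear functionals exist). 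This makes $\lambda+\eta$ a face of $\kappa+\rho$, hence $\Psi$ well defined; it is a bijection because the bounded part and recession cone of a strongly unimodular polyhedron are recovered canonically, so $[\tau,\sigma]\mapsto([\Conv(\Ve(\tau)),\Conv(\Ve(\sigma))],\RecCone(\sigma),\RecCone(\tau))$ inverts it; and, rewriting $\lambda+\eta\subseteq\lambda'+\eta'$ as ``$\lambda\subseteq\lambda'$ and $\eta\subseteq\eta'$'' and $\kappa'+\rho'\subseteq\kappa+\rho$ as ``$\kappa'\subseteq\kappa$ and $\rho'\subseteq\rho$'' via that same face structure, the four inclusions defining the order on $\widehat{P}$ turn into the two defining the order on $\Int(\PPP)$, so $\Psi$ is a poset isomorphism. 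Finally, $\Psi$ sends the subposet of $\widehat{P}$ cut out by $\kappa+\rho\in\X$ --- the cell poset of $\BC(\overline{\X})$ --- onto $\{[\tau,\sigma]:\sigma\in\X\}=\Int(\X)$, and it sends the sedentarity cone $\eta$ of a cell to $\RecCone(\lambda+\eta)=\eta$, the recession cone of the minimum of the image interval; hence it is compatible with the subcomplexes $\overline{\X}_\infty$ and with the torus-orbit stratifications, a cell in $\T\OOO_\eta\subset\T\Sigma$ going to a cell in $\bigcup_{\RecCone(\tau)=\eta}\T\OOO_{C(\tau)}$.

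With $\Psi$ in hand I would barycentrically subdivide both regular CW complexes via \Cref{propBarySubdivisionComplex}; $\Psi$ induces a simplicial isomorphism of the order complexes $O(\BC(\overline{\PPP}))\cong O(\overline{\PPP}_\infty)$, and composing it with the two barycentric homeomorphisms yields a homeomorphism $\T\Sigma\to\T\PPP_\infty$ that carries each cell $\sigma$ of $\BC(\overline{\PPP})$ onto the cell $\Psi(\sigma)$ of $\T\PPP_\infty^{\text{cub}}$. This is the asserted isomorphism of regular CW complexes, and by the previous paragraph it respects torus orbits and maps $\BC(\overline{\X})$ onto $\overline{\X}_\infty$ for every $\X\subseteq\PPP$; passing to supports and taking $\X=\{\sigma\in\PPP:\sigma\subseteq X\}$ gives the homeomorphism of pairs $(\T\Sigma,\overline{X})\cong(\T\PPP_\infty,\overline{X}_\infty)$. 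I expect the main obstacle to be the precise description of the face lattice of a strongly unimodular polyhedron $\kappa+\rho$ --- especially that \emph{every} sum of a face of $\kappa$ with a face of $\rho$ is a face, which is where unimodularity is genuinely used --- together with the bookkeeping required to extract the cell poset of the compactified, non-standard complex $\BC(\overline{\PPP})$ from \Cref{eq:PosetCompactification} and to check that all four order relations on $\widehat{P}$ match the two on $\Int(\PPP)$.
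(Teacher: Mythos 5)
Your proof follows essentially the same route as the paper's: both reduce the statement to an isomorphism of cell posets (identifying each side with $\Int(\PPP)$, respectively $\Int(\X)$ for subcomplexes, via the map $([\lambda,\kappa],[\eta,\rho])\mapsto[\lambda+\eta,\kappa+\rho]$) and then invoke the CW-rigidity result \Cref{prop:CWHomeomorphism} via barycentric subdivisions. Your write-up merely makes explicit some points the paper compresses into ``since $\X$ is strictly unimodular'' (the Minkowski face lattice of strongly unimodular polyhedra and the bookkeeping of the compactified poset), so the argument is correct and matches the paper's.
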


\begin{proof}
  We have already established in \Cref{prop:PolyhedralCWComplex} and 
	\Cref{prop:BoundedCubicalSubdivision} that 
	$\BC(\overline{\PPP})$ and $\T {\PPP}_{\infty}^\text{cub}$ are regular CW complexes. 
	In order to constuct a CW homeomorphism, by \Cref{prop:CWHomeomorphism}
	it is sufficient to construct an isomorphism between their cell posets.
	We use the short-hand $B = \Bounded(\PPP)$.
	
	First we note that since $\X$ is strictly unimodular, 
	the poset $\X$ is isomorphic to the subposet
	of $B \times \Sigma$ consisting of elements $(\kappa, \rho)$
	such that $\kappa + \rho \in \X$. 
	For the same reasons, the poset $\Int(\X)$ is isomorphic to 
	the subposet $P'(\X)$ 
	of $\Int(B) \times \Int(\Sigma)$ consisting of elements $([\lambda,\kappa], [\eta, \rho])$
	such that $\kappa + \rho \in \X$, via the map
	$([\lambda,\kappa], [\eta, \rho]) \mapsto (\lambda + \eta, \kappa + \rho)$.
	
	By \Cref{def:BoundedCubicalSubdivision}, the face poset of $\BC(\X)$ 
	is isomorphic to $P(\X) \subset \Int(B) \times \Sigma$ from \Cref{eq:nextposet}.
	By \Cref{prop:BoundedCubicalSubdivision} the recession fan 
	of $\BC(\X)$ is $\Sigma$. 
	Hence by \Cref{eq:PosetCompactification}
	the face poset of $\BC(\overline{\X})$ is isomorphic 
	to $P'(\X) = \Int(B) \times \Int(\Sigma)$.
	By the previous remark $P'(\X)$ is isomorphic to 
	$\Int(\X)$ which by \Cref{lem:PosetSpecial}
	is isomorphic to the cell poset of $\overline{\X}_\infty$. 
	
	Taking the closure of the bounded cubical subdivision for  $\X = \PPP$ 
	yields a CW homeomorphism between $\BC(\overline{\PPP})$ and $\T {\PPP}_{\infty}^\text{cub}$
	which clearly has the desired properties. 
\end{proof}

We now return to the patchwork setup. 
Recall that the patchwork of the tropical toric variety $\T \PPP$ is defined by 
\[
\PW \T \PPP:=  \bigsqcup_{\varepsilon \in \Z_2^{n+1}} \T \PPP({\varepsilon})/ \sim.  
\]
We have a canonical map (toric morphism) $\PW \T \PPP \to \PW \T$ where the latter space is just a union of two copies of 
$\T$ glued along $-\infty$. 
We denote the special fibre by $\PW \T \PPP_\infty$. 
Given $(\sigma, \varepsilon)$, $\sigma \in \PPP, \varepsilon \in \Z_2(\sigma)$, 
the assignment $(\sigma, \varepsilon) \mapsto \T \Star_\sigma \PPP(\varepsilon) \subset \PW \T \PPP_\infty$
gives $\PW \T \PPP_\infty$ the structure of a regular CW complex
whose cell poset is isomorphic to 
\[
	\{ (\sigma, \varepsilon) : \sigma \in \PPP, \varepsilon \in \Z_2(\sigma) \}.
\]
This is not quite the space we need: In analogy to the real-oriented blow-up
used in the classical case, we define 
the  \emph{tropical positive special fibre} $\PW \T \PPP_\infty^+$ as follows. 
Recall the poset 
\[
  Q(\PPP) = \{ (\sigma, \varepsilon) : \sigma \in \PPP, \varepsilon \in \Z_2(\RecCone(\sigma)) \}
\]
from \Cref{eq:Qposet}. 
Note that in comparison to the face poset of
$\PW \T \PPP_\infty$, in the definition of $Q(\PPP)$ we use $\RecCone(\sigma)$ instead of $\sigma$. 
The assignment $(\sigma, \varepsilon) \mapsto \T \Star_\sigma \PPP$ yields a directed system of topological spaces
and we define $\PW \T \PPP_\infty^+$ as the limit. 
By slight abuse of notation, we denote the copy of $\T \Star_\sigma \PPP$ labelled by $(\sigma, \varepsilon)$
as $\PW\T \Star_\sigma \PPP(\varepsilon)$.
We have a map $\PW \T \PPP_\infty^+ \to \PW \T \PPP_\infty$
which identifies strata labelled by $(\sigma, \varepsilon)$ and $(\sigma, \varepsilon')$ if $\varepsilon = \varepsilon'$ modulo
$T_{\Z_2}(\sigma)$. 

Now let $\X \subset \PPP$ be a subcomplex and $\EEE$ be a real phase structure for $\X$.
Each star $\Star_\tau \X$ carries an induced real phase structure $\Star_\tau \EEE$ given by quotienting
each $\EEE(\sigma)$ by $\T_{\Z_2}(\tau)$.  
We define the \emph{positive special fibre} $\PW(\overline{\X},\EEE)^+_\infty$ of $(\X, \EEE)$ as 
the polyhedral complex in $\PW \T \PPP_\infty^+$ whose intersection with
the stratum $\T \Star_\sigma \PPP(\varepsilon)$ labelled by $(\sigma, \varepsilon) \in Q(\PPP)$
is given by 
\[
  \PW(\overline{\X},\EEE)^+_\infty \cap \T \Star_\sigma \PPP(\varepsilon) = 
		\PW(\Star_\sigma \PPP, \Star_\sigma \EEE)(\pi(\varepsilon)).
\]
Here, $\pi \colon \Z_2(\RecCone(\sigma)) \to \Z_2(\sigma)$ denotes the canonical projection map. 
We set $\PW(\overline{X},\EEE)^+_\infty = |\PW(\overline{\X},\EEE)^+_\infty|$. 
We first state the fact that the pair 
$\PW(\overline{X},\EEE)^+_\infty \subset \PW \T \PPP^+_\infty$
is a regular CW pair and describe their poset structures.

\begin{corollary} \label{cor:SpecialFibreCWPair}
  The assignment $(\sigma, \varepsilon) \mapsto \T \Star_\sigma \PPP(\varepsilon)$
	gives $\PW \T \PPP^+_\infty$
	the structure of a regular CW complex whose cell poset is isomorphic to $Q(\PPP)$. 
	With respect to this CW structure, the pair 
	$\PW(\overline{X},\EEE)^+_\infty \subset \PW \T \PPP^+_\infty$
	is a regular CW pair such that the
	subset of cells intersecting $\PW(\overline{X},\EEE)^+_\infty$
	corresponds to $Q(\X, \EEE)$ from \Cref{eq:QposetII},
	\[
	  Q(\X, \EEE) = \{ (\sigma, \varepsilon) \in Q(\PPP) : \PW(\overline{X},\EEE)^+_\infty \cap \T \Star_\sigma \PPP(\varepsilon) \neq \emptyset\}. 
	\]
\end{corollary}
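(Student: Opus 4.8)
The plan is to read both assertions off results already established --- principally \Cref{lem:TropicalClosures}, \Cref{thm:StandardPairsTropical}, and the computations carried out in the proof of \Cref{thm:ClassicalSide} --- so that the only genuine work is a poset bookkeeping at the very end. I would organize the argument in three steps.

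First I would set up the CW structure on $\PW \T \PPP^+_\infty$. For every $\sigma \in \PPP$ the star $\Star_\sigma \PPP$ is a complete unimodular fan in $\R(\sigma) \cong \R^{n - \dim \sigma}$, so by \Cref{lem:TropicalClosures} the space $\T \Star_\sigma \PPP$ is a closed ball, hence a regular $(n - \dim \sigma)$-cell. In the directed system defining $\PW \T \PPP^+_\infty$ each structure map is, for $(\sigma,\varepsilon) \leq (\tau,\delta)$ (so $\tau \subseteq \sigma$), the inclusion $\T \Star_\sigma \PPP \hookrightarrow \T \Star_\tau \PPP$ of the closure of a torus orbit, which by \Cref{lem:TropicalClosures} is a sub-ball of the ball $\T \Star_\tau \PPP$ and, when $\tau \neq \sigma$, lies in its relative boundary. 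Exactly as for the direct limit $\PW \T \Sigma$ over $Q(\Sigma)$ in \Cref{sec:tropicalPatchworking}, the relative interiors of the cells $\T \Star_\sigma \PPP(\varepsilon)$, $(\sigma,\varepsilon) \in Q(\PPP)$, are pairwise disjoint and cover $\PW \T \PPP^+_\infty$; and since $(\sigma,\varepsilon) < (\tau,\delta)$ forces $\tau \subsetneq \sigma$, hence $\dim \tau < \dim \sigma$, the relative boundary of the cell labelled $(\sigma,\varepsilon)$ is exactly the union of the strictly lower-dimensional cells labelled by the elements of $Q(\PPP)$ strictly below $(\sigma,\varepsilon)$. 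This verifies \Cref{def:RegularComplex} and gives the cell poset $Q(\PPP)$ by construction.

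Second I would treat the pair. Fix $(\sigma, \varepsilon) \in Q(\PPP)$; by the definition of the positive special fibre, $\PW(\overline{X},\EEE)^+_\infty \cap \T \Star_\sigma \PPP(\varepsilon)$ is the sign-component $\PW(\Star_\sigma \X, \Star_\sigma \EEE)(\bar\pi(\varepsilon))$ inside the copy $\T \Star_\sigma \PPP(\bar\pi(\varepsilon))$, where $\bar\pi \colon \Z_2(\RecCone(\sigma)) \to \Z_2(\sigma)$ is the canonical projection (called $\pi$ in the definition of the positive special fibre). Arguing as in the proof of \Cref{thm:ClassicalSide}, $\XX_\sigma = \init_\sigma \XX / \Tbold_\sigma$ is, after a coordinate change in $\GL(n,\Z)$, a real linear subvariety of the torus with $\Trop(\XX_\sigma) = |\Star_\sigma \X|$ a union of cones of the complete unimodular fan $\Star_\sigma \PPP$; and since $\init$ commutes with restriction to, and quotient by, a coordinate subtorus, its real phase structure $\EEE_{\XX_\sigma}$ coincides with $\Star_\sigma \EEE$. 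Applying \Cref{thm:StandardPairsTropical} to $\XX_\sigma$ (in the sign-component $\bar\pi(\varepsilon)$, after the corresponding sign flip of $\XX_\sigma$) then shows $\PW(\Star_\sigma \X, \Star_\sigma \EEE)(\bar\pi(\varepsilon)) \subset \T \Star_\sigma \PPP(\bar\pi(\varepsilon))$ is either empty or a standard pair of cells of dimension $(d - \dim \sigma,\, n - \dim \sigma)$, hence of codimension $n-d$ uniformly in $(\sigma,\varepsilon)$. Together with the CW structure from the first step, this is precisely the assertion of \Cref{def:regpair } that $\PW(\overline{X},\EEE)^+_\infty \subset \PW \T \PPP^+_\infty$ is a regular CW pair.

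Third and last, I would identify the cells meeting the subspace with $Q(\X,\EEE)$. Unwinding definitions, $\PW(\Star_\sigma \X, \Star_\sigma \EEE)(\bar\pi(\varepsilon))$ is non-empty iff $\sigma \in \X$ and $\bar\pi(\varepsilon) \in \Star_\sigma \EEE(\overline{\kappa}) = \pi_\sigma(\EEE(\kappa))$ for some facet $\kappa \supseteq \sigma$ of $\X$, where $\pi_\sigma = \bar\pi \circ \pi \colon \Z_2^n \to \Z_2(\sigma)$ and $\pi \colon \Z_2^n \to \Z_2(\RecCone(\sigma))$ is the projection appearing in \Cref{eq:QposetII}. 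Because $\sigma \subseteq \kappa$ yields $T_{\Z_2}(\RecCone(\sigma)) \subseteq T_{\Z_2}(\sigma) \subseteq T_{\Z_2}(\kappa)$, the affine set $\pi(\EEE(\kappa)) \subset \Z_2(\RecCone(\sigma))$ is a union of cosets of $\ker \bar\pi$, so $\bar\pi(\varepsilon) \in \pi_\sigma(\EEE(\kappa))$ is equivalent to $\varepsilon \in \pi(\EEE(\kappa))$; taking the union over all facets $\kappa \supseteq \sigma$ gives $\varepsilon \in \pi(\EEE(\sigma))$, that is, $(\sigma,\varepsilon) \in Q(\X,\EEE)$. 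The hard part will be exactly this step: one must keep the two projections $\Z_2^n \to \Z_2(\RecCone(\sigma))$ and $\Z_2(\RecCone(\sigma)) \to \Z_2(\sigma)$ apart, reconcile the two descriptions of $\Star_\sigma \EEE$ (the combinatorial quotient of $\EEE$ versus $\EEE_{\XX_\sigma}$), and verify the saturation property of $\pi(\EEE(\kappa))$ that lets the non-emptiness condition descend to $Q(\X,\EEE)$; everything else is a direct appeal to \Cref{lem:TropicalClosures}, \Cref{thm:StandardPairsTropical}, and the arguments already made for \Cref{thm:ClassicalSide}.
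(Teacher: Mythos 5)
Your proposal is correct and follows essentially the same route as the paper, which disposes of this corollary in one line ("follows immediately from the construction using Theorem \ref{thm:StandardPairsTropical}"): the cells $\T\Star_\sigma\PPP(\varepsilon)$ are balls by \Cref{lem:TropicalClosures}, the intersection with the positive special fibre in each cell is a standard pair of cells by \Cref{thm:StandardPairsTropical}, and the poset identifications with $Q(\PPP)$ and $Q(\X,\EEE)$ are the bookkeeping you carry out in your third step. Your extra care in distinguishing the two projections $\Z_2^n\to\Z_2(\RecCone(\sigma))\to\Z_2(\sigma)$ and in checking the saturation of $\pi(\EEE(\kappa))$ under $T_{\Z_2}(\sigma)$ (via Condition~1 of the real phase structure) supplies exactly the detail the paper leaves implicit.
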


\begin{proof}
  This follows immediately from the construction using 
	\Cref{thm:StandardPairsTropical}.
\end{proof}

Finally, we have the following theorem relating tropical generic and special fibres.

\begin{thm} \label{thm:TropicalSide}
  The pairs $(\PW\T\Sigma, \PW(\overline{X}, \EEE))$ 
	and $(\PW \T \PPP^+_\infty, \PW(\overline{X},\EEE)^+_\infty)$
	are homeomorphic. 
	Moreover, 
	the homeomorphism can be chosen to respect the toric strata, that is, 
	such that	the cell $\T\Star_\rho \Sigma(\varepsilon)$
	is mapped to the union of cells $\T \Star_\sigma \PPP(\varepsilon)$
	with $\RecCone(\sigma) = \rho$. 
\end{thm}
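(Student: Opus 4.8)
The plan is to prove \Cref{thm:TropicalSide} by reducing it, via a suitable cubical subdivision, to the already-established homeomorphism of the underlying tropical pairs from \Cref{prop:TropicalSpecialFibre}, and then lifting that homeomorphism through the patchworking construction stratum-by-stratum. Concretely, I would first recall that \Cref{prop:TropicalSpecialFibre} gives a canonical isomorphism of regular CW complexes $\BC(\overline{\PPP}) \xrightarrow{\sim} \T\PPP_\infty^\text{cub}$ which respects toric strata and sends $\BC(\overline{\X})$ to $\overline{\X}_\infty$. The key point is that this isomorphism is, at the level of posets, an isomorphism between the face poset of the bounded-cubical subdivision of $(\T\Sigma, \overline X)$ and the face poset of the (cubically subdivided) tropical special fibre $(\T\PPP_\infty, \overline X_\infty)$, and that this isomorphism is compatible with the labelling of cells by recession cones: a cell of $\BC(\overline{\X})$ with recession cone $\rho$ corresponds to a cell of $\overline{\X}_\infty$ lying in the stratum $\T\Star_\sigma \PPP$ with $\RecCone(\sigma) = \rho$ (this should be read off directly from \Cref{eq:PosetCompactification} and \Cref{lem:PosetSpecial}).

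Next I would apply the patchworking construction to both sides. Since patchworking is a functorial, stratum-wise doubling construction (cf.\ \Cref{eq:tropicalrealtoric} and \Cref{def:realpart}), applying $\PW(-)$ to the CW-isomorphism $\BC(\overline{\PPP}) \xrightarrow{\sim} \T\PPP_\infty^\text{cub}$ yields a homeomorphism
\[
  \PW\T\Sigma = \PW|\BC(\overline{\PPP})| \; \xrightarrow{\;\sim\;}\; \PW\T\PPP_\infty^\text{cub}.
\]
The subtlety is to check that the right-hand space, built from the cubically subdivided special fibre with $2^{n+1-\dim\sigma}$-fold doubling in each stratum $\T\Star_\sigma\PPP$, is precisely the space $\PW\T\PPP_\infty^+$ — the \emph{positive} special fibre built using $\Z_2(\RecCone(\sigma))$ rather than $\Z_2(\sigma)$. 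This is exactly where the bounded-cubical subdivision does its work: the extra ``bounded'' directions of $\sigma$ that are collapsed when passing to $\RecCone(\sigma)$ are accounted for by the cubical refinement, so that the cell poset on both sides matches $Q(\PPP)$. I would verify this by comparing cell posets: the left-hand side has cell poset given by pairs $(\text{cell of }\BC(\overline\PPP), \text{sign vector})$, which by the recession-cone compatibility above is isomorphic to $Q(\PPP)$, and by \Cref{cor:SpecialFibreCWPair} the right-hand side $\PW\T\PPP_\infty^+$ also has cell poset $Q(\PPP)$; one then checks the isomorphism is the identity on $Q(\PPP)$. The same comparison restricted to the subcomplexes associated with $\X$ and the real phase structure $\EEE$ identifies $\PW(\overline{X},\EEE)$ with $\PW(\overline{X},\EEE)^+_\infty$, since on each stratum both are described by the same affine subspaces $\pi(\EEE(\sigma))$ (using \Cref{eq:QposetII} and the definition of $\PW(\overline{\X},\EEE)^+_\infty$).

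Having matched the cell posets of the two regular CW pairs — $(\PW\T\Sigma, \PW(\overline X,\EEE))$ with cell poset $(Q(\PPP), Q(\X,\EEE))$ on one side, and $(\PW\T\PPP^+_\infty, \PW(\overline X,\EEE)^+_\infty)$ with the same on the other — I would invoke \Cref{prop:CWHomeomorphism} to produce a homeomorphism of pairs realizing this poset isomorphism, with the stated cell-wise behaviour. The toric-strata compatibility in the final sentence of the theorem follows because the poset isomorphism is constructed to send a cell labelled by $(\rho,\varepsilon)$ on the $\T\Sigma$-side to the union of cells labelled by $(\sigma,\varepsilon)$ with $\RecCone(\sigma) = \rho$ on the special-fibre side; this is built into the recession-cone compatibility of \Cref{prop:TropicalSpecialFibre} and just needs to be tracked through the doubling. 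The main obstacle I anticipate is bookkeeping: carefully checking that the doubling indices match up — i.e.\ that the $\Z_2$-labels surviving the cubical subdivision of a face $\overline\sigma$ with recession cone $\rho$ are exactly $\Z_2(\RecCone(\sigma)) = \Z_2(\rho)$ and not $\Z_2(\sigma)$ — and that the real phase structure $\EEE$ transports correctly to the $\Star_\sigma\EEE$ appearing in the definition of the positive special fibre. Once the poset-theoretic dictionary is set up cleanly, the topological conclusion is a formal consequence of \Cref{prop:CWHomeomorphism}.
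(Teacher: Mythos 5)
Your strategy is in substance the paper's: everything rests on \Cref{prop:TropicalSpecialFibre} together with the key bookkeeping point you correctly isolate, namely that the doubling on the $\T\Sigma$ side is governed by $\Z_2(\RecCone(\sigma))$, which is exactly why one lands in the \emph{positive} special fibre $\PW \T \PPP^+_\infty$ rather than in $\PW\T\PPP_\infty$. The paper, however, assembles the homeomorphism more directly than you propose: it fixes $\varepsilon \in \Z_2^n$, observes that $\PW\T\PPP^+_\infty(\varepsilon) := \bigcup_{\sigma \in \PPP} \T\Star_\sigma\PPP(\pi(\varepsilon))$ is a copy of $\T\PPP_\infty$ whose intersection with $\PW(\overline{X},\EEE)^+_\infty$ is the special fibre of the subcomplex $\{\sigma \in \X : \varepsilon \in \EEE(\sigma)\}$, applies \Cref{prop:TropicalSpecialFibre} to get a homeomorphism of pairs for each such $\varepsilon$, and then checks that these $2^n$ homeomorphisms glue because the canonical homeomorphism of \Cref{prop:TropicalSpecialFibre} respects toric strata and hence is compatible with the identifications defining both quotients. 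This gluing is what replaces your step ``apply $\PW(-)$ to the CW-isomorphism'': $\PW$ is not a functor on maps, and unpacking that step requires precisely the orthant-by-orthant compatibility check above.

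Two points in your write-up need repair. First, the claim that the left-hand side has cell poset ``pairs (cell of $\BC(\overline{\PPP})$, sign vector) $\cong Q(\PPP)$'' is false as stated: the face poset of $\BC(\overline{\PPP})$ is (essentially) $\Int(\PPP)$, much finer than $\PPP$, so pairing its cells with signs gives a poset strictly larger than $Q(\PPP)$. You only obtain $Q(\PPP)$ after grouping the cubical cells into the coarse pieces corresponding to the cells $\T\Star_\sigma\PPP$ of $\T\PPP_\infty$ (compare \Cref{lem:PosetSpecial} and \Cref{cor:SpecialFibreCWPair}), i.e.\ after transporting the coarse CW structure of the special fibre back through the homeomorphism of \Cref{prop:TropicalSpecialFibre}, orthant by orthant, including its statement about $\overline{\X}$ so that the intersections with $\PW(\overline{X},\EEE)$ become standard pairs via \Cref{thm:StandardPairsTropical}. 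Second, once that transport is carried out you have already produced the desired homeomorphism of pairs, so the closing appeal to \Cref{prop:CWHomeomorphism} is redundant; conversely, you cannot invoke \Cref{prop:CWHomeomorphism} before establishing that $(\PW\T\Sigma, \PW(\overline{X},\EEE))$ is a regular CW pair governed by $(Q(\PPP), Q(\X,\EEE))$, and that is exactly what the transport (equivalently, the paper's $\varepsilon$-wise gluing) provides. So the skeleton of your argument is right, but its middle portion must be rewritten along these lines; as written it either relies on the incorrect poset identification or presupposes the structure it is meant to construct.
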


\begin{proof}
  Fix $\varepsilon \in \Z_2^n$ and define
	$\PW \T \PPP^+_\infty(\varepsilon)$ as the union of the cells
	$\T \Star_\sigma \PPP(\pi(\varepsilon))$, $\sigma \in \PPP$, 
	$\pi \colon \Z_2^n \to \Z_2(\RecCone(\sigma))$.
	It is clear that $\PW \T \PPP^+_\infty(\varepsilon)$
	is a copy of $\T \PPP_\infty$ and $\PW\T\Sigma(\varepsilon)$
	is a copy of $\T\Sigma$.
	Moreover, by definition $\PW(\overline{X},\EEE)^+_\infty \cap 
	\PW \T \PPP^+_\infty(\varepsilon)$ is equal to $\PW(\overline{X},\EEE)(\varepsilon)_\infty$,
	since $\PW(\overline{\X},\EEE)(\varepsilon) \subset \X$ is the subcomplex consisting 
	of those faces
	$\sigma \in \X$ for which $\varepsilon \in \EEE(\sigma)$. 
	Hence by \Cref{prop:TropicalSpecialFibre} there is a CW homeomorphism
	from $\PW\T\Sigma(\varepsilon)$ to $\PW \T \PPP^+_\infty(\varepsilon)$
	which maps $\PW(\overline{X}, \EEE)(\varepsilon)$ to 
	$\PW(\overline{X},\EEE)^+_\infty \cap 
	\PW \T \PPP^+_\infty(\varepsilon)$.
	Moreover, it is clear that when running through $\varepsilon \in \Z_2^n$ 
	these homeomorphisms glue to a homeomorphism 
	from $\PW\T\Sigma$ to $\PW \T \PPP^+_\infty$ with the desired properties. 
\end{proof}

\begin{proof}[Proof of \Cref{UnimodularPatchworking}]
  By \Cref{thm:ClassicalSide}, \Cref{thm:TropicalSide} and 
	\Cref{cor:SpecialFibreCWPair} we know that both pairs
	$\R \overline{\XX}_t \subset \R\Sigma$ and 
	$\PW(\overline{X}, \EEE) \subset \PW\T\Sigma$ are regular CW pairs
	governed by the same pair of posets $Q(\X, \EEE) \subset Q(\PPP)$.
	We note that on both sides the torus orbit labelled by $\rho \in \Sigma$
	corresponds to the tuples $(\sigma, \varepsilon)$, $\RecCone(\sigma) = \rho$. 
	Hence by \Cref{prop:CWHomeomorphism},
	there exists an induced homeomorphism which respects 
	the stratification by torus orbits.  
\end{proof}

\subsection{Comparison to other real tropicalizations}
\label{sec:comparison}

  Let $\XX \subset (\C^*)^n \times \D^*$ be a real analytic family 
	with non-singular tropical
	limit $X = \Trop(\XX)$
	and associated real phase structure $\EEE = \EEE_\XX$.
	As noted before, we can consider $\XX$ as a variety over $\KK_\R$. 
	In \cite{JSY-RealTropicalizationAnalytification}, a \emph{real tropicalization}
	$\Trop_r(\XX) \subset \PW \R^n$ is defined along the following lines:
	Let $p$ be a $\bar{\KK}_\R$-point of $\XX$, where $\bar{\KK}_\R$ denotes the 
	field of real Puiseux series. 
	We have an associated 
	sign vector $\sign(p) \in \Z_2^n$ (the signs of the \enquote{leading coefficients}, written
	additively) 
	and the valuation vector $\val(p) \in \QQ^n$. 
	Then $\Trop_r(\XX)$ is defined by
	\[
		\Trop_r(\XX) \cap  \R^n(\epsilon) 
		:= \text{closure of } \{ -\val(p) : 
				 p \in \XX(\bar{\KK}_\R), \sign(p) = \epsilon \}.
	\]
	(In fact, in \cite{JSY-RealTropicalizationAnalytification}
	$\Trop_r(\XX)$ is defined as the exponential version of 
	this set in $(\R^*)^n$.)
  From the definition, it is immediately clear that 
	$\Trop_r(\XX) \subset \PW(X, \EEE)$:
	If $w = -\val(p)$, then the reduction $q = [p]$ of $p$ 
	is an $\R$-point of the initial variety $\init_w \XX$
	with $\sign(q) = \sign(p)$.
	In fact, we have $\PW(X, \EEE_\XX) = \Trop_r(\XX)$
	which follows from the following lemma
	(a real version of \cite[Lemma 2.1.3]{SpeyerThesis}).

\begin{lemma} 
  Let $\XX \subset (\bar{\KK}_\R^*)^n$ be a real analytic family 
	with non-singular tropical
	limit $X = \Trop(\XX)$. Fix $w \in X\cap \QQ^n$ and 
	$q \in \init_w \XX(\R)$. 
	Then there exists $p \in \XX(\bar{\KK}_\R)$ such that
	$w = -\val(p)$ and the reduction $[p]$ to $\init_w \XX(\R)$
	is $q$. 
\end{lemma}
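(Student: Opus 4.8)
The statement is the real analogue of the standard lifting lemma for tropicalizations, so the plan is to follow the classical argument (as in \cite[Lemma 2.1.3]{SpeyerThesis}) while keeping track of signs. First I would reduce to a local model. Since $X = \Trop(\XX)$ is non-singular at $w$, by \Cref{prop:SchoenPropertiesI} any subdivision of $X$ is tropical, so choosing a subdivision $\X$ with $w$ in the relative interior of some face $\sigma$ we have $\init_w\XX = \init_\sigma\XX$, and by \Cref{lemmaline} (applied after a coordinate change in $\GL(n,\Z)$, which preserves the real structure and only permutes/relabels signs predictably) the initial variety $\init_w\XX$ is a real linear subvariety $\bL$ of the torus with $\Trop(\bL) = \Star_w X = \PP\Sigma_{M_\bL}$. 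The point $q \in \init_w\XX(\R)$ then corresponds to a real point of this linear space.

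Next I would perform the lifting. Working over $\bar\KK_\R$, write the equations cutting out $\XX$ near the relevant stratum in a way compatible with the tropical subdivision; the non-singularity (schön-ness, cf.\ the proof of \Cref{prop:SchoenPropertiesI}) guarantees that $\XX$ is, in a formal neighbourhood of the toric stratum corresponding to $w$, isomorphic to $\init_w\XX$ times the subtorus directions, with the isomorphism defined over $\bar\KK_\R$. Concretely, one picks a parametrization: $\init_w\XX$, being a real linear subvariety, admits a rational parametrization by a real torus, and this parametrization deforms over $\bar\KK_\R$. Applying it to the real point $q$ and substituting the parameter $t$ appropriately (incorporating $t^{w_i}$ in the $i$-th coordinate to realize the valuation vector $w$) produces a point $p \in \XX(\bar\KK_\R)$. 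By construction $\val(p) = -w$ and the leading coefficient of $p$ in each coordinate is exactly the corresponding coordinate of $q$, so $\sign(p) = \sign(q)$ and the reduction $[p]$ equals $q$. Since $q$ was real and all the data (the parametrization, the family, the substitution) is defined over $\R$, the lifted point $p$ lies in $\XX(\bar\KK_\R)$ and not merely $\XX(\bar\KK)$.

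The main obstacle is making the deformation of the parametrization precise and checking that it genuinely converges in $\bar\KK_\R$ (Puiseux series) and stays inside $\XX$ — in other words, that the formal/local isomorphism between $\XX$ and its initial degeneration near the stratum can be realized by convergent (or at least Puiseux) series and respects the real structure. This is exactly the content of the schön picture used already in \Cref{prop:SchoenPropertiesI,prop:SchoenPropertiesII}: the tropical compactification $\overline\XX$ in $\C\PPP$ is smooth and meets the toric stratum over $C(\sigma)$ transversally in $\XX_\sigma$, so an analytic local section through $q$ exists, and one only needs to observe that the section can be taken real because $q$, $\XX$, and the toric data are all real. Once this is in place, reading off $\val$ and $\sign$ of the section is routine. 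As remarked before the lemma, this immediately upgrades the inclusion $\Trop_r(\XX) \subseteq \PW(X,\EEE_\XX)$ to the equality $\Trop_r(\XX) = \PW(X,\EEE_\XX)$.
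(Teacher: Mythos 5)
There is a genuine gap at the heart of your argument: the lifting step is asserted rather than proved. You reduce correctly (absorbing $t^{w_i}$ into the coordinates, i.e.\ multiplying by $t^{-w}$, and invoking sch\"on-ness via \Cref{prop:SchoenPropertiesI} and \Cref{lemmaline}), but the existence of the lift $p$ is justified only by the claim that, since $\overline{\XX}\subset\C\PPP$ is smooth and meets the toric strata transversally (\Cref{prop:SchoenPropertiesII}), ``an analytic local section through $q$ exists.'' This does not follow. The point $q$ lies over a stratum $\C\OOO_{C(\sigma)}$ of the special fibre, and when $\dim\sigma>0$ (or when $w$ is not a vertex of the chosen subdivision, or not even integral) the composite map $\overline{\XX}\to\D$ is a semi-stable, not smooth, map at such points: locally it looks like $x_1\cdots x_k=t$, and no section over the unramified disc passes through a deep stratum. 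One genuinely needs ramified base change (this is exactly why the statement is about Puiseux series points) together with a lifting principle, and neither the necessary base change nor the lifting principle appears in your sketch. Your alternative phrasing --- that the rational parametrization of the real linear variety $\init_w\XX$ ``deforms over $\bar{\KK}_\R$'' --- is equivalent to the conclusion you are trying to prove, so it cannot serve as justification.

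The paper closes this gap algebraically rather than analytically: after the reduction to $w=0$ it works over the valuation ring $R\subset\bar{\KK}_\R$, takes the closure $\XX_R$ of $\XX$ in $\Spec R[x_1^{\pm},\dots,x_n^{\pm}]$, and uses flatness together with the smoothness of both fibres (sch\"on-ness again) to conclude that $\XX_R$ is smooth over $\Spec R$; locally near $q$ it is then cut out by $n-d$ polynomials with full-rank Jacobian, and Hensel's lemma over $R$ lifts the solution $q$ of the reduced system to a solution $p$ over $R$, which is automatically real since everything is defined over $\bar{\KK}_\R$. If you prefer your analytic picture, you could repair the argument by first arranging (after a base change $t\mapsto t^N$ making $w$ integral and choosing the subdivision so that $w$ is a vertex) that $q$ lies over a codimension-one stratum, where $\overline{\XX}\to\D$ really is a submersion and the real-analytic implicit function theorem applies; but as written, the key step is missing.
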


Note that here, in slight conflict to the rest of the paper, 
$\XX$ and $\init_w \XX$ denote schemes over $\bar{\KK}_\R$
and $\R$, respectively. 

\begin{proof}
  After multiplication by $t^{-w}$ we may assume $w=0$. 
	We use the shorthand $\KK = \bar{\KK}_\R$ and 
	$R = \{a \in \KK : \val(a) \geq 0\}$. 
	Let $\XX_R$ denote the closure of $\XX$ in
	$T_R^n = \Spec R[x_1^\pm, \dots, x_n^\pm]$. 
	Note that $\XX_R$ is flat over $\Spec R$ and
	its only two fibers are $\XX_R \otimes \KK = \XX$
	and $\XX_R \otimes \R = \init_0 \XX$, 
	see \cite[Lemma 2.1.3]{SpeyerThesis}.
	Since $\XX$ is schön, see \Cref{prop:SchoenPropertiesI},
	$\XX$ and $\init_0 \XX$ are non-singular. 
	Hence, by \cite[\href{https://stacks.math.columbia.edu/tag/01V8}{Lemma 01V8}]{stacks-project}, 
	$\XX_R$ is non-singular over $\Spec R$. 
	Therefore there exists an open neighbourhood $q \in U \subset T_R^n$
	such that $U \cap \XX_R \subset U$ 
	is given by $n-d$ polynomials $f_1, \dots, f_{n-d} \in R[x_1^\pm, \dots, x_n^\pm]$ with
	full rank Jacobian on $U$. 
	Since $R$ is complete with respect to the valuation, by
	\cite[Proposition 4.11 (c)]{milneLEC}
	the solution $q$ of $[f_1], \dots, [f_{n-d}]$ can be lifted to a
	solution $p$ of $f_1, \dots, f_{n-d}$.
	Hence $p \in U(\R) \subset \XX(\bar{\KK}_\R)$ is the desired point. 
\end{proof}

\section{A spectral sequence for the homology of a patchwork}

In this section, given a non-singular tropical subvariety  $X$  of  a tropical toric variety $\T\Sigma$ and a real phase structure $ \mathcal{E}$ on $X$, we describe following \cite{RS} a filtration of $H_*(\PW X ; \Z_2)$ and we give a proof of Theorem \ref{thm:boundbetti}. The case of real phase structures on non-singular tropical hypersurfaces was treated in  \cite{RS}. 
We begin with an overview of tropical homology.

\subsection{Celluar cosheaves}
Let $\X$ be a polyhedral complex in $\T\Sigma$. A \emph{cellular cosheaf $\F$ on $\X$} is a collection of $\Z_2$-vector spaces $\F(\tau)$ for every $\tau \in \X$
together with linear maps $i_{\sigma \tau} \colon \F(\sigma) \to \F(\tau)$ for every inclusion of faces $\tau \subset \sigma$ such that
$i_{\sigma \sigma} = \text{id}$ and $i_{\tau \upsilon} \circ i_{\sigma \tau}$ for every chain $\upsilon \subset \tau \subset \sigma$.

Given a cellular cosheaf $\F$ on $\X$, the groups of \emph{cellular (Borel-Moore) $q$-chains with coefficients in $\F$} are 
\[
  C_q(\X; \F) = \bigoplus_{\substack{\dim \sigma = q \\ \sigma \text{ compact }} } \F(\sigma) \quad \text{ and } \quad C^{BM}_q(\X; \F) = \bigoplus_{\dim \sigma = q} \F(\sigma). 
\]
The boundary maps $\partial \colon C_q(\X ; \mathcal{F}) \to C_{q-1}(\X ; \mathcal{F})$ are 
the usual cellular boundary maps combined with the cosheaf maps $i_{\sigma \tau}$. 
Note that since we work with vector spaces over $\Z_2$, it is not necessary to choose orientations of the faces. 
The \emph{$q$-th (Borel-Moore) homology groups of $\F$ on $\X$} are
\[
  H_q(\X; \mathcal{F}) = H_q(C_{\bullet}(\X ;\mathcal{F})) \quad \text{ and } \quad H_q^{BM}(\X; \mathcal{F}) = H_q(C_{\bullet}^{BM}(\X ;\mathcal{F})).
\]

Let $X$ be a polyhedral space in $\T\Sigma$. A \emph{cellular cosheaf $\F$ on $X$} is a family of cellular cosheaves for any $\X$ such that $X = |\X|$
(by abuse of notation, we denote all these sheaves by $\F$) satisfying the following condition: If $\Y$ is a subdivision of $\X$, $\sigma \in \Y$,
and $\tau \in \X$ 
the unique face such that $\relint(\sigma) \subset \relint(\tau)$, then $\F(\sigma) = \F(\tau)$. 
In order to ensure that the homology groups are invariant under subdivision, we have to assume that our polyhedral complexes satisfy the following property. 

\begin{definition} \label{def:Xreg}
Let $\mathcal{X}$ be a polyhedral complex in $\T\Sigma$. We say that $\mathcal{X}$ is a regular polyhedral complex if the cellular structure induced by $\mathcal{X}$ on the one-point compactification $\hat{X}$ of $X$ is a regular CW-complex,
see \Cref{SecRegularCWComplexes}. 
\end{definition} 

For example, if $X=\R$ and $\mathcal{X}$ consists of a unique one dimensional cell, then it is not regular.
  From now on, we will only consider polyhedral spaces $X$ for which any
	subdivision $\X$ admits a regular refinement. 
	For such $X$, we define 
	\[
		H_q(X; \mathcal{F}) = H_q(\X; \mathcal{F}) \quad \text{ and } 
		\quad H_q^{BM}(X; \mathcal{F}) = H_q^{BM}(\X; \mathcal{F}),
	\]
	where $\X$ is some chosen regular subdivision of $X$. 
	The definition does not depend on this choice in the following sense:
	Any two regular subdivisions $\X$ and $\X'$ have a common regular refinement $\X''$. By 
	\cite[Theorem 7.3.9]{Curry}, the homology groups with respect to $\X$, $\X'$ and $\X''$ 
	are canonically isomorphic.

These homology groups of $X$ are all well-defined up to canonical isomorphisms. 
A \emph{morphism} $\F \to \G$ of cellular cosheaves on a polyhedral complex $\X$ is a collection of morphisms $\F(\tau) \to \G(\tau)$ for every $\tau \in \X$
which commute with the  restriction maps of both $\F$ and $\G$.  
We get the induced notions of \emph{kernels, cokernels, and exact sequences} of morphisms of cellular cosheaves on $\X$.

\subsection{Tropical homology} \label{sec:trophom}
Let $X$ be a non-singular tropical subvariety of a tropical toric variety $\T\Sigma$ defined by a fan $\Sigma$.
 If  $\T\Sigma_{\rho}$ and $\T\Sigma_{\eta}$ are a pair of strata corresponding to cones $\rho \subset \eta$ of $\Sigma$ then there is a canonical projection map of tangent spaces producing the map
\begin{equation}\label{eq:projmaps}
\pi_{\rho \eta} \colon T_{\Z_2} (\T\Sigma_{\rho})  \to T_{\Z_2} (\T\Sigma_{\eta}). 
\end{equation}
If $\sigma$ is a polyhedron of sedentarity $\rho$ such that $\sigma \cap \T\Sigma_{\eta} \neq \emptyset$, then
$\pi_{\rho \eta}$ maps $T_{\Z_2} (\sigma)$ onto $T_{\Z_2} (\sigma \cap \T\Sigma_{\eta})$.

\begin{definition}
Let $\X$ be a polyhedral complex in $\T\Sigma$.
The \emph{$p$-multi-tangent spaces} of $\X$ are cellular cosheaves $\F_p$ on $\X$ defined as follows. 
For a face $\tau \in \X$ of sedentarity $\rho$, we set 
\begin{equation}\label{def:Fp}
  \F_p(\tau) = \sum_{\substack{\tau \subset \sigma \subset \T\Sigma_{\rho}}} \bigwedge^p T_{\Z_2}(\sigma).
\end{equation}
Given $\tau' \subset \tau$ of sedentarity $\eta$, 
the maps of the cellular cosheaf 
$i_{ \tau \tau'} \colon \F_p(\tau) \to \F_p(\tau')$ 
are given by composing  $\wedge^p \pi_{\rho \eta}:  \bigwedge^p T_{\Z_2}(\sigma) \to \bigwedge^p T_{\Z_2}(\sigma \cap \T\Sigma_\eta)$ from (\ref{eq:projmaps})
with the inclusion maps 
$\bigwedge^p T_{\Z_2}(\sigma \cap \T\Sigma_\eta) \to \F_p(\tau')$.
\end{definition}

Clearly, the cosheaves $\F_p$ give rise to cellular cosheaves on polyhedral spaces $X$. 
The groups 
\[
  H_q(X; \mathcal{F}_p) \quad \text{ and } \quad H_q^{BM}(X; \mathcal{F}_p)
\]
are called the \emph{$(p,q)$-th (Borel-Moore) tropical homology group}.

\subsection{The sign cosheaf}\label{sec:cosheaf}

Let $\X$  be a polyhedral complex in $\T\Sigma$. Suppose $\X$ is  equipped with a real phase structure $\E$. 
We now recall the sign cosheaf defined in \cite{RS}.

For a face $\tau$ of $\X$ of sedentarity $\rho$, we set
\[
  \EEE(\tau) := \{ \varepsilon \in T_{\Z_2}(\T\Sigma_\rho) : \tau(\varepsilon) \in \PW(\X,\EEE)\}.
\]
Note that this coincides with the original $\EEE(\sigma)$ in the case when $\sigma$ is a facet. 
We define the abstract vector space $\mathcal{S}_{\mathcal{E}}(\tau)$ with basis given by the elements of $\mathcal{E}(\tau)$,
\begin{equation} \label{eq:SignCosheafSpaces} 
  \mathcal{S}_{\mathcal{E}}(\tau)=\Z_2\left\langle w_\varepsilon \mid \varepsilon\in\mathcal{E}(\tau)\right\rangle.
\end{equation}
The vector space $\mathcal{S}_{\mathcal{E}}(\tau)$ is a linear subspace of the abstract vector space 
$
\Z_2\left\langle w_\varepsilon\mid \varepsilon \in T_{\Z_2}(\T\Sigma_\rho)\right\rangle. 
$
If $\tau' \subset \tau$ is a face of sedentarity $\eta$, then $\tau(\varepsilon) \in \PW(\X, \EEE)$
implies that $\tau'(\pi_{\rho \eta}(\varepsilon)) \in \PW(\X, \EEE)$ (with the projection map
$\pi_{\sigma \tau}$ from (\ref{eq:projmaps}).
It follows that 
\begin{equation} \label{eq:SignCosheafMaps} 
  i_{\tau \tau'} \colon \S_\EEE(\tau) \to \colon \S_\EEE(\tau'), \quad  w_\varepsilon \mapsto w_{\pi_{\rho \eta}(\varepsilon)}
\end{equation}  
is a well-defined linear map for all $\tau' \subset \tau$.

\begin{definition}\label{def:signcosheaf}
The \emph{sign cosheaf} $\S_\mathcal{E}$ on $\X$ is the cellular cosheaf on $\X$ 
given by the spaces in (\ref{eq:SignCosheafSpaces}) and the cosheaf maps in (\ref{eq:SignCosheafMaps}).
It gives rise to a cellular sheaf $\S_\EEE$ on $X = |\X|$.
\end{definition}

The proof of the next proposition is exactly the same as the proof of Proposition 3.17 in \cite{RS}.
We denote by $C_{\bullet}(\PW(\X,\EEE); \Z_2)$ and $C^{BM}_{\bullet}(\PW(\X,\EEE); \Z_2)$ the ordinary
cellular chain complexes of $\PW(\X,\EEE)$ over $\Z_2$.

\begin{proposition}\label{prop:realhomcosheaf}
Let $X$ be a polyhedral subspace with real phase structure $\E$ and associated sign cosheaf $\S$. 
Let $\X$ be a polyhedral complex such that $X = |\X|$. 
Then we have 
isomorphisms of chain complexes
$$C_{\bullet}(\X; \S) \cong C_{\bullet}(\PW(\X,\EEE); \Z_2) \quad \text{and} \quad  C^{BM}_{\bullet}(\X; \S) \cong C^{BM}_{\bullet}(\PW(\X,\EEE); \Z_2)$$
and therefore also isomorphisms 
$$H_{q}(X; \S) \cong H_{q}(\PW(\X,\EEE); \Z_2) \quad \text{and} \quad H^{BM}_{q}(X; \S)  \cong H^{BM}_{q}(\PW(\X,\EEE); \Z_2).$$
\end{proposition}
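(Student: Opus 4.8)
The plan is to construct, for a fixed polyhedral complex $\X$ with $X = |\X|$, an explicit $\Z_2$-linear isomorphism between the cellular chain groups $C_q(\X; \S)$ and $C_q(\PW(\X,\EEE); \Z_2)$ (and likewise for the Borel--Moore variants) that commutes with the boundary maps, and then invoke the fact that subdivision does not change the homology of either side. The point, exactly as in \cite[Proposition 3.17]{RS}, is that both chain complexes are built out of the same combinatorial data: on the one hand a $q$-cell of $\PW(\X,\EEE)$ is by \Cref{def:realpart} a pair $\sigma(\varepsilon)$ with $\sigma \in \X$ a $q$-dimensional face of sedentarity $\rho$ and $\varepsilon \in \EEE(\sigma) \subset \Z_2(\rho)$; on the other hand $C_q(\X; \S)$ is the direct sum over the $q$-dimensional faces $\sigma$ of the spaces $\S_\EEE(\sigma) = \Z_2\langle w_\varepsilon : \varepsilon \in \EEE(\sigma)\rangle$. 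So the assignment $\sigma(\varepsilon) \mapsto w_\varepsilon \in \S_\EEE(\sigma)$ defines a bijection between the distinguished bases of $C_q(\PW(\X,\EEE);\Z_2)$ and $C_q(\X;\S)$, hence a linear isomorphism; restricting the sum to compact faces gives the non-Borel--Moore statement, and keeping all faces gives the Borel--Moore statement.

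The key step is then to check that this basis bijection intertwines the two boundary operators. The cellular boundary map on $C_\bullet(\PW(\X,\EEE);\Z_2)$ sends a $q$-cell $\sigma(\varepsilon)$ to the sum of its codimension-one faces; by the description of polyhedra in $\PW\T\Sigma$ preceding \Cref{def:realpart}, a codimension-one face of $\sigma(\varepsilon)$ has the form $\tau(\varepsilon')$ where $\tau \leq \sigma$ is a codimension-one face of $\sigma$, of sedentarity $\eta \supseteq \rho$, and $\varepsilon' = \pi_{\rho\eta}(\varepsilon)$ is the image of $\varepsilon$ under the canonical projection $\Z_2(\rho) \to \Z_2(\eta)$ — and one must verify that $\tau(\varepsilon')$ is indeed a cell of $\PW(\X,\EEE)$, i.e.\ that $\varepsilon' \in \EEE(\tau)$ in the extended sense of \Cref{sec:cosheaf}. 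But that is precisely the content of the remark in \Cref{sec:cosheaf} that $\tau(\varepsilon) \in \PW(\X,\EEE)$ implies $\tau'(\pi_{\rho\eta}(\varepsilon)) \in \PW(\X,\EEE)$, which is what made the cosheaf map $i_{\sigma\tau}$ in \Cref{eq:SignCosheafMaps} well-defined in the first place. On the $\S$-side, the cellular boundary $\partial \colon C_q(\X;\S) \to C_{q-1}(\X;\S)$ sends $w_\varepsilon \in \S_\EEE(\sigma)$ to $\sum_{\tau} i_{\sigma\tau}(w_\varepsilon) = \sum_\tau w_{\pi_{\rho\eta}(\varepsilon)} \in \S_\EEE(\tau)$, summing over codimension-one faces $\tau$ of $\sigma$ — which is exactly the image of $\partial \sigma(\varepsilon)$ under our bijection. (All signs are absent since we work over $\Z_2$, as noted after the definition of the tropical chain complexes, so there is no orientation bookkeeping to match up.) This gives the isomorphism of chain complexes, and passing to homology yields the isomorphisms $H_q(X;\S) \cong H_q(\PW(\X,\EEE);\Z_2)$ and their Borel--Moore counterparts.

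The main obstacle, such as it is, is bookkeeping rather than conceptual: one has to be careful that the combinatorial description of the faces and face relations of $\PW(\X,\EEE)$ inside $\PW\T\Sigma$ — particularly the behaviour at higher-sedentarity faces, where $\Z_2(\rho)$ changes and $\EEE(\tau)$ is the \emph{union} $\bigcup_{\sigma \supseteq \tau} \pi(\EEE(\sigma))$ rather than a single affine subspace — matches the abstract cosheaf structure of $\S_\EEE$ on the nose, including the compatibility of the incidence numbers (here just incidence mod $2$) of the CW structure on $\PW\T\Sigma$ with the underlying cellular structure of $\X$. Since this verification is carried out in full detail for hypersurfaces in \cite[Proposition 3.17]{RS} and the argument there only uses the formal properties of real phase structures and patchworks that we have set up in \Cref{sec:realtropspaces} and \Cref{sec:cosheaf} — not any special feature of the codimension-one case — I would simply transcribe that proof, and so I will content myself with the remark that the proof is identical to that of \cite[Proposition 3.17]{RS}.
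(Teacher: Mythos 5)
Your proposal is correct and matches the paper's argument, which simply states that the proof is identical to that of \cite[Proposition 3.17]{RS}: the cells $\sigma(\varepsilon)$ of $\PW(\X,\EEE)$ biject with the basis vectors $w_\varepsilon \in \S_\EEE(\sigma)$, and the mod-$2$ cellular boundary matches the cosheaf boundary via the projections $\pi_{\rho\eta}$, exactly as you describe. Nothing further is needed.
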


\subsection{The filtration of the sign cosheaf}
We describe here a filtration of the sign cosheaf $\S_\mathcal{E}$. This filtration is a direct generalisation of the one described in \cite{RS} in the case of hypersurfaces. 
Given a subset $H \subset T_{\Z_2}(\rho)$, we denote by $\Aff_p(H)$ the set of all $p$-dimensional affine subspaces of $T_{\Z_2}(\rho)$ contained in $H$.
For a subset $G \subset T_{\Z_2}(\rho)$, set $$ w_{G} := \sum_{\epsilon \in G} w_{\epsilon} \in \Z_2\left\langle w_\varepsilon \mid \varepsilon\in T_{\Z_2}(\rho) \right\rangle.$$  
Finally, we define 
\[
  K_p(H) := \langle  w_{G} \ | \ G \in \Aff_p(H)   \rangle.
\]

\begin{definition}
\label{def:Kp}
Let $\X$ be a polyhedral complex of sedentarity $\rho$ in $\T\Sigma$ with real phase structure $\mathcal{E}$.
For all $p$, we  define a collection of cosheaves $\K_p$ on $\X$. 
On a face $\tau$ of $\X$ of sedentarity $\eta$, we set  
\begin{equation} \label{eq:DefKp} 
  \K_p(\tau) = \sum_{\sigma \supset \tau} K_p(\pi_{\rho \eta} (\EEE(\sigma))) \subset \S_\mathcal{E}(\tau),
\end{equation}
where the sum runs through facets of $\X$. 
If $\tau' \subset \tau$, the cosheaf maps $\K_{p}(\tau) \to \K_{p}(\tau')$ 
are the restrictions of the maps $i_{\tau \tau'} \colon \S_{\mathcal{E}}
(\tau) \to \S_{\mathcal{E}}(\tau'). $
The cosheaves $\K_p$ give rise to cosheaves on $X = |\X|$ which we denote by the same letter.
\end{definition}

\begin{remark}
Note that, with the assumption from above,
\[
  \EEE(\tau) = \bigcup_{\sigma \supset \tau} \pi_{\rho \eta} (\EEE(\sigma)),
\]
where the union runs through facets of $\X$. This explains the inclusion $\K_p \subset \S_\EEE$ mentioned in the 
above definition. In fact, the cosheaves $\K_p$ form a filtration of $\S_\EEE$ given by
\begin{equation}\label{filttau} \K_{d} \subset \dots \subset  \K_{2} \subset \K_{1} \subset \K_{0} =\S_\mathcal{E} .
\end{equation}
Indeed, any affine subspace $G$ of dimension $p$ can be written as disjoint union $G = H_1 \sqcup H_2$ of two
affine subpaces of dimension $p-1$. Then $w_G=w_{H_1}+w_{H_2}$, which shows $\K_p \subset \K_{p-1}$.
\end{remark}

  Assume that $H \subset T_{\Z_2}(\T\Sigma_\rho)$ is a affine subspace. 
	Then by general linear algebra we have 
	\begin{equation} \label{eq:wedgemaps} 
		K_p(H) / K_{p+1}(H) \cong \bigwedge^p T(H),
	\end{equation}
	induced by mapping $w_{G}$ to $v_1 \wedge \dots \wedge v_p$. 
	Here $G \in \Aff_p(M)$ and $v_1, \dots, v_p$ form a basis of $T(G)$. 
	Furthermore, note that $\F_p$ can be written in the form
\[
  \F_p(\tau) = \sum_{\sigma \supset \tau} \bigwedge^p \pi_{\rho \eta} (T_{\Z_2}(\sigma))
\]
analogous to (\ref{eq:DefKp}). 
Since 
\[
  \pi_{\rho \eta} (T_{\Z_2}(\sigma)) = \pi_{\rho \eta} (T_{\Z_2}(\EEE(\sigma))) = T_{\Z_2}(\pi_{\rho \eta} (\EEE(\sigma))),
\]
the restriction of the map $\K_p(T_{\Z_2}(\T\Sigma_\rho)) \to \bigwedge^p T_{\Z_2}(\T\Sigma_\rho)$ from (\ref{eq:wedgemaps})
to $\K_p(\tau)$ takes image in $\F_p(\tau)$. Hence, it gives rise to a morphism of cellular cosheaves $\K_p \to \F_p$.
Applying (\ref{eq:wedgemaps}) to each summand, we see that this morphism is surjective and $\K_{p+1}$ lies in the kernel.
Since the sum is not a direct sum, in general, we cannot conclude that $\K_{p+1}$ is \emph{equal} to the kernel. 
Equality holds, however, if $\X$ represents a non-singular tropical variety. This is the content of the following proposition.

\begin{proposition} \label{prop:exactcosheaf}
Let $X$ be a non-singular tropical variety in $\T\Sigma$ with a real phase structure $\EEE$. 
Then for all $p$ there is an exact sequence of cosheaves
$$ 0 \to \K_{p+1} \to \K_p \to \F_p \to 0.$$
\end{proposition}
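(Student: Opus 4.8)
The claim is local on $X$: the sequence $0 \to \K_{p+1} \to \K_p \to \F_p \to 0$ is a sequence of cellular cosheaves, so exactness can be checked face-by-face, i.e.\ we must show that for every face $\tau$ of a fixed subdivision $\X$ of $X$ the sequence
\[
  0 \to \K_{p+1}(\tau) \to \K_p(\tau) \to \F_p(\tau) \to 0
\]
is exact. The maps are already in place from the discussion preceding the proposition: the first map is the inclusion $\K_{p+1} \subset \K_p$ (coming from writing an affine $p{+}1$-space as a disjoint union of two affine $p$-spaces), and the second is the composite of the projection $\K_p(T_{\Z_2}(\T\Sigma_\rho)) \to \bigwedge^p T_{\Z_2}(\T\Sigma_\rho)$ of \eqref{eq:wedgemaps} restricted to $\K_p(\tau)$, which was shown to land in $\F_p(\tau)$ and to be surjective with $\K_{p+1}$ contained in its kernel. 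Hence the only thing left to prove is the reverse inclusion: $\ker(\K_p(\tau) \to \F_p(\tau)) \subset \K_{p+1}(\tau)$, and the non-singularity of $X$ is what will make this work.

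First I would reduce to sedentarity zero: replacing $\T\Sigma$ by $\T\Star_\rho\Sigma$ and $\X$ by $\X \cap \T\OOO_\rho$ lets us assume $\tau$ has sedentarity $0$. Next, non-singularity of $X$ at a point in the relative interior of $\tau$ means that (after a $\GL_n(\Z)$ change of coordinates and shrinking) the star $\Star_\tau \X$ is a matroid fan $\PP\Sigma_M$; correspondingly, the affine subspaces $\EEE(\sigma)$ for facets $\sigma \supset \tau$ are exactly the topes/covectors of the associated oriented matroid $\MMM_\EEE$ (via \cite[Theorem 1.1]{RRSmat} and the description in Examples \ref{ex:PosetCovectors}, \ref{ex:LasVergnas}). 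So the statement becomes a purely combinatorial assertion about oriented matroids: if $\MMM$ is an oriented matroid with covectors whose supports (flats of $M$) index the facets $\sigma$, and if
\[
  x = \sum_{\sigma} c_\sigma\, w_{\EEE(\sigma)} \in \K_p(\tau), \qquad c_\sigma \in \Z_2,
\]
maps to $0$ in $\F_p(\tau) = \sum_\sigma \bigwedge^p T_{\Z_2}(\sigma)$, then $x$ can be rewritten as a $\Z_2$-combination of basis vectors $w_G$ with $G$ an affine $(p{+}1)$-subspace contained in some $\EEE(\sigma)$.

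The heart of the argument, and the main obstacle, is precisely that the sum $\K_p(\tau) = \sum_\sigma K_p(\EEE(\sigma)) \subset \S_\EEE(\tau)$ is \emph{not} direct: the affine subspaces $\EEE(\sigma)$ overlap, and an element can have many expressions. One must use the matroid-fan structure to control these overlaps. The natural strategy, mirroring \cite{RS} in the hypersurface case, is: (i) for a \emph{single} affine subspace $H$ the sequence $0 \to K_{p+1}(H) \to K_p(H) \to \bigwedge^p T(H) \to 0$ is exact by elementary linear algebra over $\Z_2$ (this is \eqref{eq:wedgemaps} together with the observation that $K_{p+1}(H)$ is exactly the kernel — a statement about the exterior algebra of a $\Z_2$-vector space that can be proved by induction on $\dim H$); (ii) then one shows the diagram of short exact sequences indexed by the facets $\sigma$ is compatible in a way that lets one pass from the direct sum $\bigoplus_\sigma K_p(\EEE(\sigma))$ to the sum inside $\S_\EEE(\tau)$ without losing exactness. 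Concretely, I would argue that if $x \in \K_p(\tau)$ dies in $\F_p(\tau)$, pick a minimal expression $x = \sum_\sigma x_\sigma$ with $x_\sigma \in K_p(\EEE(\sigma))$; its image in $\F_p(\tau)$ is $\sum_\sigma [x_\sigma]$ where $[x_\sigma] \in \bigwedge^p T_{\Z_2}(\sigma)$; one then needs a lemma — using that the $T_{\Z_2}(\sigma)$ are the flacets of a matroid fan and that $\F_p$ for a non-singular tropical variety has a well-understood local structure — guaranteeing that the relations among the $[x_\sigma]$ in $\sum_\sigma \bigwedge^p T_{\Z_2}(\sigma)$ are generated by "$\bigwedge^p$ of common refinements" relations, each of which lifts to a relation among the $x_\sigma$ landing in $\K_{p+1}$. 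Thus every $x_\sigma$ with $[x_\sigma] = 0$ lies in $K_{p+1}(\EEE(\sigma)) \subset \K_{p+1}(\tau)$ by step (i), and the correction terms needed to kill $\sum [x_\sigma] = 0$ also lie in $\K_{p+1}(\tau)$, giving $x \in \K_{p+1}(\tau)$ as required. I expect verifying this compatibility lemma — i.e.\ that the combinatorics of the oriented matroid forces the overlaps of the $\EEE(\sigma)$ to be "no worse" than the overlaps of the tangent spaces $T_{\Z_2}(\sigma)$, so that the non-directness of the two sums matches up — to be the delicate technical core, exactly as it was in \cite[Section 3]{RS}.
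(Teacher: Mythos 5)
Your reduction and setup are fine and match the paper (check exactness face-by-face, reduce to a matroid fan via non-singularity, invoke \cite[Theorem 1.1]{RRSmat} to get an oriented matroid, and note that injectivity of $\K_{p+1}\hookrightarrow\K_p$, surjectivity of $\K_p\to\F_p$, and $\K_{p+1}\subset\ker(\K_p\to\F_p)$ are already in place). But the heart of the argument — the reverse inclusion $\ker(\K_p(\tau)\to\F_p(\tau))\subset\K_{p+1}(\tau)$ — is exactly the part you leave unproven. Your proposed route via a ``compatibility lemma'' asserting that all relations among the images $[x_\sigma]$ in the non-direct sum $\sum_\sigma \bigwedge^p T_{\Z_2}(\sigma)$ are generated by common-refinement relations lifting into $\K_{p+1}$ is not established anywhere in your sketch, and you yourself flag it as the delicate technical core; as written this is a genuine gap, since it is precisely the non-directness of both sums that makes a direct structural lifting argument hard to control, and it is not clear such a generation statement holds or how the oriented-matroid combinatorics would yield it.

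The paper avoids this difficulty entirely by a dimension count, which is the missing idea. Since the $\K_p$ filter $\S_\EEE$ and each quotient $\K_p(\tau)/\K_{p+1}(\tau)$ surjects onto $\F_p(\tau)$, exactness at every level is equivalent to the single numerical identity
\[
  \sum_{p=0}^{\dim X}\dim\F_p(\tau)\;=\;\dim\S_\EEE(\tau),
\]
checked on the local model $\overline{\projFan}_M$. There, $\dim\S_\EEE(\tau)=\lvert\bigcup_\sigma\EEE(\sigma)\rvert$ is half the number of topes of the oriented matroid $\MMM$, which by Zaslavsky's theorem equals $(-1)^r\tilde{\chi}_M(-1)$ (reduced characteristic polynomial), while by Zharkov's computation of the cosheaves $\F_p$ on a matroid fan one also has $\sum_p\dim\F_p(\Sigma_M)=(-1)^r\tilde{\chi}_M(-1)$. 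Comparing these two counts forces each surjection $\K_p/\K_{p+1}\to\F_p$ to be an isomorphism, with no need to analyze how the subspaces $\EEE(\sigma)$ or the tangent spaces $T_{\Z_2}(\sigma)$ overlap. To repair your proof you should replace your step (ii) by this counting argument (or supply a complete proof of your lifting lemma, which would be substantially harder than the problem it is meant to solve).
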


\begin{proof}
The proof follows the same lines as the proof of Proposition 4.10 in \cite{RS}.
As mentioned before, it is true in general that $\K_{p+1} \to \K_p$ (the inclusion map) is injective,
$\K_p \to \F_p$ is surjective, and $\K_{p+1} \subset \ker(\K_p \to \F_p)$. 
We prove equality in the last statement by a comparison of dimensions. 
More precisely, let $\X$ be a polyhedral complex such that $X = |\X|$ and pick $\tau \in \X$. 
It suffices to show that 
$$
\sum_{ p = 0}^{\dim X} \dim \F_p(\tau)=\dim\S_\E(\tau).
$$
This statement is a local statement and so by 
Definition \ref{def:nonsingular} we may assume $\X = \overline{\projFan}_M$ for some matroid $M$. 
By \cite[Theorem 1.1]{RRSmat}, there is an oriented matroid $\MMM$ representing the real phase structure
on $\X$.

The dimension of $S_\E(\tau)$ is equal to $|\cup_{\sigma \in \mathcal{X}} \mathcal{E}(\sigma)|$ and so $2 \dim S_\E(\tau)$ is equal to the number of topes of the oriented matroid of $M$.    By
 \cite{Zaslavsky},  the number of topes of $\mathcal{M}$ is equal to $(-1)^r \chi_M(-1)$, where $\chi_M$ is its characteristic polynomial of the underlying matroid $M$ of $\mathcal{M}$, see also 
 \cite[Theorem 4.6.1]{bjorner}. Thus  $\dim S_\E(\tau) = (-1)^r \tilde{\chi}_M(-1)$ where $\tilde{\chi}_M(t)$ is the reduced characteristic polynomial of $M$. 
Finally, by   \cite{ZharkovOS},  we obtain 
$$
(-1)^r \tilde{\chi}_M(-1)=\sum_{0\leq p \leq r} \dim \F_p(\Sigma_M),
$$
and the proof follows.
\end{proof}

\subsection{The spectral sequence}

We now establish the spectral sequence relating the homology of the patchwork of a tropical manifold to the tropical homology groups. 

We denote the complex of relative chains by
\[
  C_{\bullet}^\diamond(\X ; \K_{p}, \K_{p+1}):= C_{\bullet}^\diamond(\X; \K_{p})/C_{\bullet}^\diamond(\X; \K_{p+1})
\]
where the $\diamond$ denotes either normal or Borel-Moore homology.
This is the $0$-th page of the spectral sequence for the chain complex $C^{\diamond}_{\bullet}(\X ;\S_{\mathcal{E}})$ obtained by the filtration by the $\K_p$ cellular cosheaves.  
Thanks to Proposition \ref{prop:exactcosheaf}, we obtain the following statement,
analogous to \cite[Proposition 4.12]{RS}.

\begin{cor}
\label{prop:firstpage}
Let $X$ be a non-singular tropical variety in $\T\Sigma$ with a real phase structure $\EEE$, and 
suppose $\X$ be a polyhedral complex such that $X = |\X|$.
The first page of the spectral sequence associated to the filtration of the chain complex  $C_{\bullet}^\diamond(\X ;\S_{\mathcal{E}})$ by  the chain complexes $C_{\bullet}^\diamond(\X ;\K_p)$ has  terms 
$$E^{1,\diamond}_{q,p} \cong  H_q^\diamond(X; \F_p).$$
\end{cor}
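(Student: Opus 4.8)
The statement is the standard identification of the $E^1$-page of the spectral sequence associated to a filtered chain complex: the $E^1$-terms are the homology groups of the associated graded pieces. Here the filtration is the one on $C_\bullet^\diamond(\X;\S_\EEE)$ coming from the subcomplexes $C_\bullet^\diamond(\X;\K_p)$, via the inclusions of cosheaves $\K_d \subset \cdots \subset \K_1 \subset \K_0 = \S_\EEE$ from \eqref{filttau}. Since each $\K_p$ is a subcosheaf of $\S_\EEE$ and $C_\bullet^\diamond(\X;-)$ is exact on the short exact sequences of cosheaves in play (the chain groups are direct sums of the cosheaf values over faces, so taking chains is exact), the quotient complex $C_\bullet^\diamond(\X;\K_p,\K_{p+1}) = C_\bullet^\diamond(\X;\K_p)/C_\bullet^\diamond(\X;\K_{p+1})$ is naturally identified with $C_\bullet^\diamond(\X;\K_p/\K_{p+1})$, the chain complex with coefficients in the quotient cosheaf.

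\textbf{Key steps.} First I would invoke \Cref{prop:exactcosheaf}: since $X$ is a non-singular tropical variety with real phase structure $\EEE$, the sequence $0 \to \K_{p+1} \to \K_p \to \F_p \to 0$ is exact as a sequence of cosheaves on $\X$. Second, I would observe that applying $C_\bullet^\diamond(\X;-)$ to this exact sequence yields a short exact sequence of chain complexes $0 \to C_\bullet^\diamond(\X;\K_{p+1}) \to C_\bullet^\diamond(\X;\K_p) \to C_\bullet^\diamond(\X;\F_p) \to 0$; exactness here is immediate because for each face $\tau$ the functor $\F \mapsto \F(\tau)$ is exact (short exact sequences of finite-dimensional $\Z_2$-vector spaces split), and taking the direct sum over faces of a given dimension preserves exactness. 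Hence the $0$-th graded piece $C_\bullet^\diamond(\X;\K_p,\K_{p+1})$ is isomorphic as a complex to $C_\bullet^\diamond(\X;\F_p)$. Third, by the general construction of the spectral sequence of a filtered complex (see e.g.\ \cite{Hatcher}, or any standard homological algebra reference), the $E^1$-page consists of the homology of the $0$-th page, so $E^{1,\diamond}_{q,p} = H_q(C_\bullet^\diamond(\X;\K_p,\K_{p+1})) \cong H_q(C_\bullet^\diamond(\X;\F_p)) = H_q^\diamond(\X;\F_p)$. Finally, using that $\X$ may be chosen to be a regular subdivision of $X$ and that tropical homology is subdivision-invariant (as recalled after \Cref{def:Xreg}, via \cite[Theorem 7.3.9]{Curry}), this equals $H_q^\diamond(X;\F_p)$, giving the claim.

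\textbf{Main obstacle.} There is no serious obstacle: the content of the corollary is really just \Cref{prop:exactcosheaf} combined with the formal machinery of spectral sequences of filtered complexes. The only point requiring a modicum of care is the passage from the exact sequence of cosheaves to the exact sequence of (Borel-Moore or ordinary) chain complexes, and the identification of the associated graded of the filtration on $C_\bullet^\diamond(\X;\S_\EEE)$ with $C_\bullet^\diamond(\X;\F_p)$; this is where one uses that the chain groups are built by summing cosheaf values over faces so that exactness is preserved. Since this argument is identical to the one in \cite[Proposition 4.12]{RS} for hypersurfaces, I would simply note the analogy and refer there for the details that are purely bookkeeping.
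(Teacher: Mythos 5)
Your proposal is correct and follows essentially the same route as the paper: the paper derives the corollary directly from \Cref{prop:exactcosheaf}, identifying the associated graded of the filtration by the $\K_p$ with chains valued in $\F_p$ and then taking homology to obtain the $E^1$-page, exactly as you do (with the bookkeeping deferred, as in the paper, to the analogous \cite[Proposition 4.12]{RS}). Your explicit remark that the chains functor is exact on cosheaf sequences is the only detail the paper leaves implicit, and it is correct.
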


\begin{proof}[Proof of Theorem \ref{thm:boundbetti}]
	The pages of a  spectral sequence satisfy  
	$\dim E^{\infty}_{q,p} \leq \dim E^r_{q,p}$ for all $r$.  
	By \Cref{prop:firstpage}, 
	and the convergence of the spectral sequence associated to the filtration  we obtain 
	\[ 
		\dim H_q^\diamond(X; \S_{\mathcal{E}})  = 
		\sum_{p = 0}^{\dim X}  E^{\infty,\diamond}_{q,p}  \leq  
		\sum_{p = 0}^{\dim X}  E^{1,\diamond}_{ q,p} =  
		\sum_{p = 0}^{\dim X} H_q^\diamond(X; \mathcal{F}_p).
	\]
	The first part of the  theorem now follows since 
	$\dim H_q^{\diamond} (\PW(\X,\EEE); \Z_2) = \dim H_q^\diamond(X; \S_{\mathcal{E}}) $ by \Cref{prop:realhomcosheaf}. 
	
For the relation between the tropical signature and the Euler characteristic of $\PW(\X,\EEE)$, using again 
\Cref{prop:realhomcosheaf} we have  $\chi^\diamond(\PW(\X,\EEE)) = \chi^\diamond(\S_{\EEE})$, and moreover,  
		\[
	  \chi^\diamond(\S_{\EEE}) = \sum_{p,q} (-1)^q E^{\infty,\diamond}_{q,p}
		  = \sum_{p,q} (-1)^q E^{1,\diamond}_{q,p} = \sum_p \chi^\diamond(\F_p) = \sigma^\diamond(X).
	\]
This completes the proof. 
\end{proof}

\subsection{Hirzebruch polynomials} 

We conclude by proving that in the context of \Cref{sec:patchworkinglimit}, the Hirzebruch genus 
can be computed from the tropical limit. 
Recall that the Hirzebruch genus of $\overline{\XX}_t$ is 
\begin{align} \label{eq:classicalHirzebruch} 
	\chi_y(\overline{\XX}_t)  = \sum_{p,q} e_c^{p, q}(\overline{\XX}_t)y^p,
\end{align}
where $e_c^{p, q}(\overline{\XX}_t)=\sum_k (-1)^k h^{p,q}(H_c^k (\XX_t))$ 
is defined as in the introduction. 
 
Given a tropical non-singular subvariety $X \subset \T\Sigma$
of dimension $d$, we define
its \emph{tropical Hirzebruch genus} as 
\[
  \chi_y(X) := \sum_{p, q=0}^d (-1)^{p+q} \dim H_{q}^\BM (X, \F_p) y^p.
\]
For the reader familiar with the cohomology versions of tropical $(p,q)$-groups, we note that
(using the universal coefficient theorem for the field $\Z/2\Z$) 
\[
\chi_y(X) = \sum_{p=0}^d (-1)^{p+q} \dim H_c^q(\X; \F^p) y^p,
\]
in accordance with the classical definition \eqref{eq:classicalHirzebruch}.

\begin{prop}\label{prop:tropicalepoly}
	Let $\XX$ be a analytic family over $\D^*$
	with tropical limit $X = \Trop(\XX)$. 
	Let $\Sigma$ be a pointed unimodular fan 
	and consider the closures $\overline{\XX} \subset \C\Sigma \times \D^*$ and 
	$\overline{X} \subset \T\Sigma$. Assume that $\overline{X}$ is non-singular. 
	Then for generic
	$t \in \D^*$,
	we have
	\[
	  \chi_y(\overline{\XX}_t) = \chi_y(\overline{X}).
	\]
\end{prop}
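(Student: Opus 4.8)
The strategy is to deduce the equality of Hirzebruch genera from the combinatorial description of the special fibre that we have already established for the topological (patchwork) side, by running the \emph{same} semi-stable degeneration argument with $\chi_y$ in place of the topological Euler characteristic. Concretely, $\chi_y$ is additive for compactly supported mixed Hodge structures, so for the semi-stable degeneration $f\colon \C\PPP \to \D$ (arising from a strongly unimodular subdivision $\PPP$ as in \Cref{UnimodularPatchworking}, to which we reduce by \Cref{lem:Refinements}) the Clemens--Schmid / Steenbrink theory of the limit mixed Hodge structure expresses $e_c^{p,q}(\overline{\XX}_t)$ for generic $t$ in terms of the cohomology of the strata of the special fibre $\overline{\XX}_0$. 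By \Cref{prop:SchoenPropertiesII}, the stratum of $\overline{\XX}_0$ indexed by $\sigma \in \X = \{\sigma \in \PPP : \sigma \subset X\}$ is $\overline{\XX}_\sigma \cap \C\Star_\sigma\PPP$, which by \Cref{lemmaline} (applied after a $\GL(n,\Z)$-coordinate change, as in the proof of \Cref{prop:SchoenPropertiesI}) is a \emph{very affine linear} variety, i.e.\ the complement of a hyperplane arrangement whose matroid is the one associated to $\Star_\sigma X$.

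First I would record the key input: for a linear very affine variety $U_M$ whose boundary arrangement has matroid $M$ of rank $k$, the compactly supported $E$-polynomial, and hence $\chi_y(U_M)$, is a specialization of the characteristic polynomial $\chi_M$. This is the analogue on the algebraic side of \Cref{prop:exactcosheaf} together with the computation of $\dim \S_\EEE(\tau)$ via Zaslavsky's theorem and the Orlik--Solomon--Zharkov identity $(-1)^r\tilde\chi_M(-1) = \sum_p \dim\F_p(\Sigma_M)$ used there; more precisely, one has $\chi_{-1}(U_M) = \sum_p (-1)^p \dim\F_p$ and a $y$-graded refinement coming from the fact that the cohomology of $U_M$ is pure of Tate type (Orlik--Solomon), giving $e_c^{p,q}(U_M)=0$ unless $p=q$ and $e_c^{p,p}(U_M) = (-1)^p\dim \F_p(\Sigma_M)$ (up to the usual sign/reindexing conventions). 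Combining these local identifications with the weight spectral sequence of the degeneration, the global $\chi_y(\overline{\XX}_t)$ becomes an alternating sum over faces $\sigma \in \overline{\X}$ of the local contributions $\chi_y(\overline{\XX}_\sigma \cap \C\OOO_{C(\sigma)})$.

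On the tropical side, I would verify that $\chi_y(\overline{X})$ admits \emph{exactly the same} combinatorial expansion. Here the tools are already in place: by \Cref{prop:firstpage} the spectral sequence degenerating $\S_\EEE$ by the $\K_p$ computes $\dim H_q^\BM(X;\F_p)$ on its first page, and $\chi_y$ is the corresponding alternating Euler-characteristic-type invariant. Running the cellular chain complex of a regular (e.g.\ bounded-cubical) subdivision of $\overline{\X}$ stratified by $\overline{\X}_\infty$-type pieces, or more directly using that tropical $(p,q)$-homology with Borel--Moore support satisfies a Mayer--Vietoris/stratification additivity, one gets $\chi_y(\overline{X}) = \sum_{\sigma \in \overline{\X}} (\text{local term at }\sigma)$, and the local term at a face of sedentarity $\rho$ and star matroid $M_\sigma$ is precisely $\sum_p (-1)^{p+\dim\sigma}\dim\F_p(\Sigma_{M_\sigma})\,y^p$ by the Orlik--Solomon--Zharkov identity, matching the algebraic side term by term.

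The main obstacle, I expect, is making the weight/Hodge bookkeeping on the algebraic side fully rigorous and matching its grading conventions (the $p$-index, and the role of $y$) with the tropical $\F_p$-grading. The topological statement $\chi^\BM(\R\overline{\XX}_t) = \chi^\BM(\PW X)$ that we already have only uses $\chi_{-1}$ and additivity of Euler characteristics, which is soft; upgrading to $\chi_y$ requires knowing that each complex stratum $\overline{\XX}_\sigma \cap \C\OOO_{C(\sigma)}$ is a hyperplane-arrangement complement with \emph{Tate-type, pure} cohomology so that its $E$-polynomial is determined by its Poincar\'e polynomial, and that the limit MHS has no extra monodromy weight filtration contributions mixing different $\F_p$'s. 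Once this purity is invoked, the identification is essentially formal: both sides are governed by the same poset of faces decorated by the same local matroid data, so the two alternating sums agree. I would therefore organize the proof as: (1) reduce to the strongly unimodular case; (2) compute the algebraic side via additivity of $\chi_y$ over the special-fibre stratification plus the Tate purity of linear arrangement complements; (3) compute the tropical side via the $\K_p$-filtration and the same stratification; (4) observe the two combinatorial expansions coincide face by face.
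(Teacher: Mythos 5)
Your overall strategy (reduce to local linear pieces, use additivity of $\chi_y$, and feed in the purity of arrangement complements together with the Orlik--Solomon/Zharkov identity relating the characteristic polynomial to $\sum_p\dim\F_p$) is genuinely close in spirit to the paper's proof, and your local input is exactly the one used there (via \cite[Lemma 7.5]{KS-TropicalGeometryMotivic}, \cite{ZharkovOS}, \cite{OrlikTerao}). However, there are two concrete gaps. First, the pivotal identity you assert --- that $\chi_y(\overline{\XX}_t)$ equals an alternating sum over faces $\sigma\in\overline{\X}$ of $\chi_y$ of the special-fibre strata $\XX_\sigma=\overline{\XX}\cap\C\OOO_{C(\sigma)}$ --- is false as stated. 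Unlike the topological Euler characteristic, $\chi_y$ does not kill torus factors ($\chi_y(\C^*)=y-1\neq 0$), so the nearby-fibre formula carries Tate-twist corrections: the correct statement (this is \cite[Corollary 1.4]{KS-TropicalGeometryMotivic2}, which the paper cites) is $\chi_y(\XX_t)=\sum_{\sigma\ \text{bounded}}(-1)^{\dim\sigma}\chi_y(\init_\sigma\XX_t)$, a sum over \emph{bounded} faces only, with $\init_\sigma\XX\cong\XX_\sigma\times(\C^*)^{\dim\sigma}$ contributing the extra factor $(y-1)^{\dim\sigma}$. Consequently your final step, ``the two combinatorial expansions coincide face by face,'' does not go through: the naive tropical cellular expansion $\chi_y(\overline{X})=\sum_{\sigma\in\overline{\X}}\sum_p(-1)^{p+\dim\sigma}\dim\F_p(\sigma)\,y^p$ runs over \emph{all} faces with no torus factors, and reconciling the two expansions is precisely where the paper does real work, applying tropical Poincar\'e duality \cite{JRS-Lefschetz11Theorem} twice (globally, and then on each star) to rewrite the Borel--Moore expansion as a sum over bounded faces of $\chi_y$ of the (non-reduced) stars, matching the algebraic side. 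A Clemens--Schmid/Steenbrink argument could in principle reproduce the Katz--Stapledon formula, but one must keep the Tate twists and deal with the non-properness of the fibres (the strata are very affine), neither of which your sketch addresses.

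Second, your reduction step presupposes a global semi-stable degeneration built from a strongly unimodular subdivision $\PPP$ compatible with both $X$ and $\Sigma$, i.e.\ essentially the hypothesis that $\RecCone(\X)\cup\Sigma$ is a fan. That hypothesis appears in \Cref{Patchworking} but \emph{not} in \Cref{prop:tropicalepoly}, which allows an arbitrary pointed unimodular $\Sigma$. The paper sidesteps this by first using that both $\chi_y(\overline{\XX}_t)$ and $\chi_y(\overline{X})$ are additive over the torus-orbit (sedentarity) decomposition, reducing to $\Sigma=\{0\}$, where one only needs a strictly unimodular subdivision of $\R^n$ containing $\X$ (available by \Cref{lem:Refinements}) and can then quote the formula for subvarieties of tori. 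You would need to add this orbit-by-orbit reduction (or an equivalent device) before any degeneration argument can be set up in the stated generality.
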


If $\C\Sigma = \CP^n$, the statement follows from the
stronger equality of classical and tropical Hodge numbers
proven in \cite{IKMZ}. Proposition \ref{prop:tropicalepoly} was already proven in the case of hypersurfaces, for any $\Sigma$, in \cite{ARS}.

\begin{proof} 
  Given $\rho \in \Sigma$, we set $\XX^\rho := \overline{\XX} \cap (\C\OOO_\rho \times \D^*)$
	and $X^\rho := \overline{X} \cap \T\OOO_\rho$. Note that $\Trop(\XX^\rho) = X^\rho$.
	We first claim that both sides of the equation are motivic in the sense
	that
	\begin{align*} 
		\chi_y(\overline{\XX}_t) &= \sum_{\rho \in \Sigma} \chi_y(\XX^\rho_t), &
		\chi_y(X)                  &= \sum_{\rho \in \Sigma} \chi_y(X^\rho).
	\end{align*}
	The first equation follows from the fact that the Hirzebruch genus is motivic.
	On the tropical side, it is just a simple computation. 
	If $\overline{\X}$ is a regular subdivision
	of $\overline{X}$ we get
	\begin{align*} 
		\chi_y(\overline{X}) &= \sum_{p,q} (-1)^{p+q} \dim H_{q}^\BM (X, \F_p) y^{p} \\
			&= \sum_p \sum_{\substack{\sigma \in \overline{\X}}} (-1)^{p+\dim \sigma} \dim \F_p(\sigma) y^{p}\\
			&= \sum_{\rho \in \Sigma} 
			   \sum_p \sum_{\text{Sed} \sigma = \rho} (-1)^{p+\dim \sigma} \dim \F_p(\sigma) y^{p}\\
			&= \sum_{\rho \in \Sigma}  \chi_y(X^\rho).
	\end{align*}
	It hence suffices to consider the case $\Sigma = \{0\}$.
	By Lemma \ref{lem:Refinements}, we can choose a strictly unimodular subdivision $\PPP$ of
	$\R^n$ such that $\X = \{\sigma \in \PPP : \sigma \subset X\}$
	is a subdivision of $X$.
	We can now apply \cite[Corollary 1.4]{KS-TropicalGeometryMotivic2}, which proves that for generic $t$
	\[
	  \chi_y(\XX_t) 
		  = \sum_{\substack{\sigma \in \X \\ \sigma \text{ bounded}}} 
			  (-1)^{\dim \sigma}\chi_y(\init_\sigma \XX_t).
	\]
	Again, we show that the analogous statement holds tropically.
	To do so, we denote by $S(\sigma)$ the \emph{non-reduced} star of 
	$X$ at $\sigma$. That is, $S(\sigma)$ is the $d$-dimensional fan in $\R^n$
	such that $S(\sigma) /T(\sigma) = \Star_\sigma X$. Applying Poincaré duality \cite{JRS-Lefschetz11Theorem} we obtain that
	\begin{align*} 
		\chi_y(X) &= \sum_{p,q} (-1)^{p+q} \dim H_{q}^\BM (X, \F_p) y^p \\
			&= \sum_{p,q} (-1)^{p+q} \dim H_{d-q} (X, \F_{d-p}) y^p \\
			&= \sum_p \sum_{\substack{\sigma \in \X \\ \sigma \text{ bounded}}}
			   (-1)^{p+d-\dim \sigma} \dim \F_{d-p}(\sigma) y^{p}\\
			&= \sum_{\substack{\sigma \in \X \\ \sigma \text{ bounded}}} (-1)^{\dim\sigma} \sum_p
			   (-1)^{p+d} \dim H_{0} (S(\sigma), \F_{d-p}) y^p \\
			 &= \sum_{\substack{\sigma \in \X \\ \sigma \text{ bounded}}} (-1)^{\dim\sigma} \sum_p
			   (-1)^{p+d} \dim H_{d}^\BM (S(\sigma), \F_{p}) y^p \\  
			&= \sum_{\substack{\sigma \in \X \\ \sigma \text{ bounded}}} (-1)^{\dim\sigma}
			   \chi_y(S(\sigma)). 
	\end{align*}
	Note that $\Trop(\init_\sigma \XX) = S(\sigma)$, so we have reduced to the case
	where $\XX=\bL$ is linear. Let $M = M_\bL$ be the associated matroid and let
	$\overline{\mu}_M(y)$ denote its reduced characteristic polynomial. By \cite[Lemma 7.5]{KS-TropicalGeometryMotivic}, the Hirzebruch genus of $\bL$ is equal to the reduced characteristic polynomial of $M$. Moreover, by  \cite{ZharkovOS} and \cite{OrlikTerao}, the dimension of $\F_p(\projFan_M)$ is the $(d-p)$-th coefficient of $\overline{\mu}_M(y)$. 
	Then we get
	\begin{align*} 
		 \chi_y(\bL) &= \overline{\mu}_M(y) \\
			&= \sum_p (-1)^{d-p} \dim \F_{d-p}(\projFan_M) y^p \\
			&= \sum_p (-1)^{p+d} \dim H_{d}^\BM (\projFan_M,\F_{p}) y^p \\
			&= \chi_y(\projFan_M). 
	\end{align*}
	This proves the claim. 
\end{proof}

\begin{proof}[Proof of \Cref{cor:eulerSignature}]
Evaluating the polynomials from \Cref{prop:tropicalepoly} in $y = -1$, one obtains
$$
\sigma_c(\overline{\XX}_t) = \sigma^\BM(\overline{X}).
$$
The latter is equal to $\chi^\BM(\RR \overline{\XX}_t )$ by combining Theorems \ref{Patchworking} and \ref{thm:boundbetti}. 
\end{proof}

\bibliographystyle{alpha}
\bibliography{biblio}
\end{document}